\renewcommand{\Box}{\framebox{\rule{0.3em}{0.0em}}}
\newtheorem{theorem}{Theorem}[section]
\newtheorem{theorem*}{Theorem}[subsubsection]
\newtheorem{lemma}{Lemma}[section]
\newtheorem{proposition}{Proposition}[section]
\newtheorem{example}{Example}[section]
\newtheorem{definition}{Definition}[section]
\newtheorem{assumption}{Assumption}[section]
\newcommand{\setd}{{ d \kern -.15em l}}
\newcommand{\hatsetd}{ d \hat{\kern -.15em l }}
\newcommand{\dd}{\mathsf {d\kern -0.07em l}}
\newcommand{\by}{\boldsymbol}
\newcommand{\bgeqn}{\begin{eqnarray}}
\newcommand{\edeqn}{\end{eqnarray}}
\newcommand{\bgeq}{\begin{eqnarray*}}
\newcommand{\edeq}{\end{eqnarray*}}
\newcommand{\R}{{\rm I\!R}}
\newcommand{\inmat}[1]{\mbox{\rm {#1}}}
\newcommand{\diag}{{\rm diag}}
\newcommand{\bbe}{{\mathbb{E}}}
\newcommand{\be}{\begin{equation}}
\newcommand{\ee}{\end{equation}}
\tikzstyle{arrow} = [->,>=stealth]
\def\bbe{{\Bbb{E}}} 
\renewcommand{\Box}{\hfill \rule{2.3mm}{2.3mm}}
\numberwithin{equation}{section}
\renewcommand{\Box}{\framebox{\rule{0.3em}{0.0em}}}
\def\bbe{{\Bbb{E}}} 
\newcommand{\mytab}{
  \begin{tabular}{lcr}
    \toprule
    O/D & 2 & 3 \\
    \midrule
    1 & 400 & 800 \\
    4 & 600 & 200 \\
    \bottomrule
  \end{tabular}
}
\title{
A Modified 
Late Arrival Penalised User Equilibrium Model and 
Robustness in Data Perturbation
}
 \author{
 Manlan Li\footnote{
 Department of Systems Engineering and Engineering Management, 
 The Chinese University of Hong Kong. E-mail: mlli@se.cuhk.edu.hk.
%
} and
 Huifu Xu\footnote{Corresponding author, Department of Systems Engineering and Engineering Management, The Chinese University of Hong Kong. Email: hfxu@se.cuhk.edu.hk.}}
\begin{document}

\maketitle

\begin{abstract}
In a seminal paper \cite{Watling06},
Watling proposes a stochastic variational inequality approach to model traffic flow equilibrium over a network where
the transportation time is random and
a path is selected by 
to transport if the user's expected utility of the transportation of the path is maximized over their paths.
A key feature of Watling's model is that the user's utility function
incorporates a penalty term for lateness
and the resulting equilibrium is known as 
Late Arrival Penalised User Equilibrium (LAPUE).
In this paper, we revisit the LAPUE model with a different focus:
we begin by adopting a new penalty function
which gives a smooth transition of the boundary 
between lateness and no lateness and demonstrate 
the LAPUE model based on the new penalty function 
has a unique equilibrium  and is stable with respect to (w.r.t.)   
small perturbation of probability distribution under moderate conditions. 
We then move on to discuss statistical robustness of the modified 
LAPUE (MLAPUE)  model by considering the
case that the data to be used for fitting the density function may be perturbed in practice or there is a discrepancy between the probability distribution of the underlying uncertainty constructed with empirical data and the true probability distribution in future,
 we investigate how the data perturbation may affect the equilibrium. We undertake the analysis from two perspectives: 
(a) a few data are 
perturbed by outliers and (b) 
all data are potentially 
perturbed.
In case (a), we use the well-known influence function to quantify the sensitivity of the equilibrium by the outliers and in case (b) we examine
the difference between empirical distributions of the equilibrium based on perturbed data and the equilibrium based on unperturbed data.
To examine the performance of the MLAPUE model and our theoretical analysis of statistical robustness, we carry out some numerical experiments, the preliminary results confirm the  statistical robustness as desired.

\end{abstract}

\textbf{Key words.} Smooth penalty function for lateness, MLAPUE, distribution shift, data perturbation, statistical robustness

\section{Introduction}
\label{sec:introd}

%
Traffic assignment involves determining the flow patterns of arcs or paths in a traffic network, taking into account the network topology, origin-destination (OD) demands, and arc performance functions.  Various models have been proposed for this, most of which are based on  Wardrop's user equilibrium framework \cite{Wardrop52}.
Wardrop states two principles in assigning vehicles into transportation network: (a) user equilibrium (UE); (b) system optimum (SO). The UE principle states that in a transportation network, individual travelers selfishly choose their routes to minimize their own travel costs. In other words, no traveler can decrease their travel time by unilaterally changing their route. 
This equilibrium ensures that no traveler has an incentive to switch to a different route, as it would increase their individual travel time. {\color{black} The SO principle, also known as the social optimum, aims to minimize the total travel time or cost for all travelers in the network. Unlike the 
UE, the SO does not consider individual 
behavior,
rather 
it focuses on the overall efficiency of the transportation system. The SO condition is met when all travelers are assigned to routes that collectively minimize the total travel time or cost across the network. This may involve redistributing traffic from congested routes to less congested ones to improve the overall system performance.}
Beckmann et al \cite{BMW56} formulate UE as a mathematical program
with symmetric arc costs
while 
Smith \cite{Smith79} and Dafermos \cite{Dafermos80} 
cast it as a variational inequality (VI) problem with general asymmetric arc costs. 
In a more recent development, 
the UE model has been extended to encompass ride-hailing and ride-sharing services, see e.g. \cite{MXMC20,MMCL22,PZDG22} and references therein.

The classic UE models assume that travelers make their choices based on fixed and known arc travel times, aiming to minimize their traffic times or costs. This assumption implies that travelers have complete information about the network and can accurately predict the travel times on each arc. 
In practice, 
travelers often face uncertainty and imperfect information about the actual travel times. 
To address the issue, Daganzo and Sheffi \cite{DaganzoY77} 
propose a stochastic user equilibrium (SUE) model that takes into account the imperfect perceptions of travel times by travelers. Their model recognizes that travelers may have limited or inaccurate information about the actual travel conditions, and it incorporates these imperfect perceptions into the equilibrium framework. 
Lou et al \cite{LouYL10} develop a mathematical model that incorporates travelers' bounded rationality and their decision-making process in response to congestion pricing strategies. They consider the uncertainties in travelers' route choices and develop a robust optimization approach to determine congestion pricing schemes that are resilient to the variations in travelers' behaviors and information.
As Prakash et al mentioned in literature \cite{PSS18} that the traditional SUE models exhibit several drawbacks that limit their effectiveness. First, they do not adequately capture the risk preferences of users and the value of reliability in route choice, as empirical evidence suggests \cite{CarrionL12}. Second, while these models incorporate errors in users' perception of travel times, they treat actual travel times as deterministic \cite{Watling02}. Third, there is a lack of consistency in the assumptions made regarding the stochastic nature of different components of the traffic system within the equilibrium framework \cite{NakayamaW14}.
To address these limitations, Watling \cite{Watling02} introduces an extension to the standard SUE termed as Generalized SUE. This extension endogenously models the impact of flow variability on travel time variability, effectively addressing the latter two drawbacks mentioned above. Building upon this work, Nakayama and Watling \cite{NakayamaW14} propose a comprehensive framework for formulating network equilibrium problems in stochastic networks. They present four model variants that differ in how stochastic flows are generated. While their framework provides a flexible and detailed characterization of equilibrium problems with stochastic flows, it does not explicitly consider the risk preferences of travelers.

Another stream of previous research acknowledges that traffic conditions inherently exhibit stochastic behavior, primarily due to uncertainties in both demand and supply factors. This line of investigation aims to develop more realistic models that capture how travelers behave in an uncertain traffic environment \cite{XLYZ11}.
Multiple studies have focused on the reliability-based UE problem, which explicitly considers the risk preferences of travelers. In these studies, cost functions are employed that incorporate a measure of travel time reliability, allowing for a more comprehensive analysis of transportation network dynamics.  
A brief summary of each study is provided below, highlighting key model characteristics such as the reliability measure used, treatment of uncertainty, problem formulation, and solution algorithm. 
The proposed models encompass various behavioral assumptions regarding travelers' preference behaviors, including but not limited to models based on expected utility
theory (e.g., \cite{MirchandaniS87}), the travel time budget models (e.g., \cite{LLS06}), the late arrival penalty model (e.g., \cite{Watling06}), the mean excess travel time model (e.g., \cite{ZhouC08}), the models based on cumulative prospect theory (e.g., \cite{ConnorsS09,XLYZ11}) and the mean-risk model \cite{NikolovaS14}. 
Regarding the treatment of uncertainty, existing models can be categorized based on how they handle uncertainty on the supply side \cite{Watling06}, demand side \cite{ChenZ10,YFW18,MMCL22}, or both \cite{SiuL08,ZCS11}. 
For example, 
Lo, Luo, and Siu  
 \cite{LLS06} develop a mathematical framework to capture the travel time budget of individual travelers, considering their risk preferences and the degradability of the transport network. The model allows for a more realistic representation of traveler behavior by accounting for the trade-off between travel time and the perceived risk associated with congestion.

Watling \cite{Watling06} proposes the late arrival penalty model and extends the UE to Late Arrival Penalized User Equilibrium (LAPUE). It incorporates the concept of late arrival penalties into the equilibrium condition to capture the preference of travelers for minimizing both travel time and the risk of arriving late to their destinations. 
LAPUE is a variation of the UE principle in transportation network modeling. 
In traditional UE travelers aim to minimize their travel time without explicitly considering the potential penalties associated with late arrival. However, in real-world scenarios, individuals often have time constraints or face penalties for arriving late, such as missing appointments, incurring financial costs, or experiencing inconvenience.
The LAPUE principle addresses this by introducing penalties for late arrival. It assumes that travelers have a certain value or cost associated with arriving late, and they consider this penalty when making route choices. The objective for travelers in LAPUE is to minimize the total cost, which includes both travel time and the penalties for late arrival.
Chen et al \cite{ChenZ10} introduce the $\alpha$-reliable mean-excess traffic equilibrium model, which considers the stochastic nature of travel times and incorporates reliability considerations into the equilibrium framework.
Nikolova and Stier-Moses \cite{NikolovaS14} introduce a mean-risk traffic assignment model, which combines the mean travel time and a risk-aversion factor multiplied by the standard deviation of travel time along a path. This objective function allows users to incorporate their risk preferences into their route selection process. 
Ma et al \cite{MMCL22} explore and address the challenges of modeling and solving the user equilibrium problem in stochastic ridesharing systems with elastic demand. 
For more descriptions of relevant aspects, we refer readers to the literature review section of \cite{PSS18} and the review \cite{ZXQCC22}.

In this paper, we revisit the LAPUE model but with some new focuses.
First, we propose a
parameterized penalty function for lateness which allows a smooth transition of the boundary
between lateness and no lateness.
Second, we demonstrate under some moderate conditions that the  LAPUE  based on the new penalty function approximates the LAPUE when the parameter is driven to $0$. Third, we consider data perturbation  
in the modified LAPUE (MLAPUE) model.
There are at least three types of perturbations which we believe may occur in practice: 
(a) the perceived data  used for the construction of mathematical model are perturbed for various reasons such as measurement/recording errors; 
(b) there is a nominal distribution describing the probability distribution of the random parameters in a transportation network but the 
actual probability distribution of these uncertainty parameters might deviate from the nominal, e.g., due to unexpected closure of a road or prolonged duration of maintenance work; (c) the validation data (usually for future) may deviate from 
the current or past data that the model is developed. 
Under these circumstances, there is a need to 
investigate how the change of the data may affect the MLAPUE. 
We show under some moderate conditions that the MLAPUE based on the new penalty function 
is resilient to the exogenous data perturbation by exploiting classical techniques in robust statistics \cite{Hampel68,Huber04} and recent developments on qualitative and quantitative 
statistical robustness analysis in risk management and operations research \cite{cont10,KSZ12,KSZ14,GuoXu21a,LiuPang23}.

The rest of the paper is organized as follows. 
In Section \ref{sec2:MLPAUE}, we recall the definition of the LAPUE model, introduce a MLAPUE model with a new penalty function for lateness and discuss existence and uniqueness of MLAPUE and its relation to LAPUE. 
We also propose sample average approximation of the MLAPUE model and discuss convergence of MLAPUE obtained with sample data converges to its true counterpart as the sample size goes to infinity.
In Section \ref{sec3:SR-MLAPUE}, we discuss statistical robustness of MLAPUE when the sample data are perturbed. 
In Section \ref{se4：NT}, we report numerical test results
of the statistical robustness of sample average approximated MLPAUE. Finally we conclude with some remarks in Section \ref{sec5:Con}. All proofs of technical results are delegated to the appendix.

Throughout the paper, we use the following notation.
\begin{itemize}
    \item $G$: directed graph representing the transportation network;
    \item $\mathcal{N}$: the set of nodes on $G$ and $|\mathcal{N}|=n$ where $|\mathcal{N}|$ denotes cardinality of set ${\cal N}$;
    \item $\mathcal{A}$: the set of arcs on $G$ and $|\mathcal{A}|=A$;
    \item $\mathcal{W}$: the set of all original-destination pairs on $G$ and $|\mathcal{W}|=W$;
    \item $R_k$: the set of paths connecting the OD pair $k=1,\cdots,W$;
    \item $\mathcal{R}$: collection of paths between all OD pairs $\mathcal{R}=\cup_{k=1}^WR_K$ and $|\mathcal{R}|=N$;
    \item $q_k$: demand between OD pair $k$;
    \item $f_r$: flow on path $r$;
    \item $\by{f}\in\R^N$: vector of path flows with component
    variable $f_r$ for $r\in \mathcal{R}$ ;
    \item $v_a$: flow on arc $a$;
    \item $\by{v}\in\R^A$: vector of arc-flows with component
    variable $v_a$ for $a\in \mathcal{A}$;
    \item $\delta_{ar}$: arc-path indicator variable;
    \item $\Delta$: arc-path incidence matrix;
    \item $\Pi$: OD-path incidence matrix;
    
    \item $\xi\in\R^k$: the vector of uncertain factors;
    
    \item $t^Q_a(\by{v})$: mean travel time on arc $a\in A$ under the distribution $Q$ of random vector $\xi$ when the overall arc-flow distribution is $\by{v}$;
    \item $\by{t}^Q(\by{v})\in\R^A$: vector of mean arc travel time under the distribution $Q$ when the overall arc-flow distribution is $\by{v}$;
    \item $\by{T}(\by{v},\xi)\in\R^A$: the 
    vector of actual arc travel time which 
    depends on the arc flow vector $\by{v}$ and random vector $\xi$;
    \item $C_r(\by{f},\xi)$: random travel time along path $r\in\mathcal{R}$ when the overall path-flow distribution is $\by{f}$;
    \item $\by{C}(\by{f},\xi)$: 
    vector of path travel times 
    when the overall path-flow distribution is $\by{f}$;
    \item $u_r(\by{f},\xi)$: 
    disutility of path $r\in\mathcal{R}$ when the overall path-flow distribution is $\by{f}$;
    \item $\by{u}(\by{f},\xi)$: 
    vector of path-disutility when the overall path-flow distribution is $\by{f}$;
    \item $\R^N$ and $\R^N_+$: $N$-dimensional Euclidean space and its nonnegetive subspace.
    \item $\|\cdot\|$ denotes the Euclidean norm in a finite dimensional space unless specified otherwise.
\end{itemize}

\section{ Watling's LAPUE model and its variation}
\label{sec2:MLPAUE}

The transportation network of interest is represented by a directed graph $G(\mathcal{N},\mathcal{A})$, where $\mathcal{N}$ denotes the set of $n$ nodes and $\mathcal{A}$ represents the set of $A$ directed arcs. The set of all origin-destination (OD) pairs
on the network ${G}$ is denoted by $\mathcal{W}$, and the set of path connecting the OD pair $k$ is given by $R_k$ for $k=1,\cdots,W$. The entire collection of paths between all the OD pairs is represented by $\mathcal{R}=\cup_{k=1}^W R_k$. The total number of paths is given by $N=|\mathcal{R}|$. Let $\by{q}\in\R^W$ be a vector of demands with components $q_k$ representing the deterministic demand on the  $k$th OD. 

For a deterministic demand vector $\by{q}$, an assignment of flows to all paths is denoted by a vector $\by{f}\in\R^N$, where each component $f_r$ represents the flow on path $r$. In order to be feasible for meeting the demand, we must have $\by{f}\in D$ where 
\begin{equation}\label{FS-path}
    D=\left\{\by{f}\in\R_+^N:\Pi\by{f}=\by{q} \right\},
\end{equation}
and $\Pi$ denotes the OD-pair incidence matrix with entries $\pi_{k,r}=1$ if path $r$ connects the $k$th OD, and $\pi_{k,r}=0$ otherwise.
While an assignment of flows to all arcs is 
denoted
by $\by{v}\in\R^A$ with components $v_a$ representing 
 the flow on arc $a$.
The path flows are related to the arc flows by
\begin{equation}\label{relationship-flow-acr-path}
    \by{v}=\Delta\by{f},
\end{equation}
where $\Delta$ is the arc-path incidence matrix with components $\delta_{ar}=1$ if arc $a$ is on path $r$, and $\delta_{ar}=0$ otherwise. Based on relation in \eqref{relationship-flow-acr-path}, 
we denote the set of demand-feasible arc flow vectors 
similarly 
by 
 \begin{equation}\label{FS-arc}
     \tilde{D}=\left\{\by{v}\in\R^A:\by{v}=\Delta\by{f} \ {\rm and} \ \by{f}\in D\right\}.
 \end{equation}





 \subsection{LAPUE model}
 \label{subsec2.1:LAPUE}
Watling \cite{Watling06} 
proposes 
a generalised 
UE model 
known as {\em late arrival penalised user equilibrium (LAPUE)}
which reflects (a)
 a driver's valuation of path's expected attributes, such as distance, expected travel time, tolls, and more; and
 (b) the extent to which following the path is likely, in the light of travel time variability, to satisfy a traveller on that OD pair in achieving an ``acceptable'' arrival time at the destination.
 
Let $\by{T}(\by{v})$ denote 
the vector of arc travel times, 
where component $T_a(\by{v})$ 
represents
the actual travel time on arc $a$ for $a=1,\cdots,A$.
Since arc travel time is often random,
the path travel time is  also a random vector.
We denote it  by $\by{C}(\by{f})$, where the 
$r$th component $C_r$ represents 
 the travel time  on path $r$, which is related to the random arc travel time vector by the transformation  
\begin{equation}\label{relationship-CT}
\by{C}(\by{f})=\Delta^\top\by{T}(\by{v}). 
\end{equation}
For each OD pair $k\in\{1,\cdots,W\}$, let $\tau_k$ be a longest acceptable travel time. 
Watling \cite{Watling06} 
uses a composite path disutility to incorporate both the standard ``generalized travel time'' and the travel time acceptability in the form of a lateness penalty:
 \begin{equation}
 \label{defi:pathdisutility}
 u_r = \theta_0 d_r + \theta_1 C_r + \theta_2\max(C_r-\tau_k,0), \; \inmat{for}\;  r=1,\cdots,N, \ k=1,\cdots,W,
 \end{equation}
 where 
 $d_r$ 
 denotes the composite of attributes (such as distance) that are independent on time/flow and $\theta_0$ is the value placed on these attributes, $\theta_1$ is the value-of-time, and $\theta_2$ reflects the value of being one time unit later than acceptable. 
 This is indeed a key feature of the LAPUE model.
 By introducing $\psi_r$ 
 to represent
 the marginal density function of 
 $C_r$,
 we can reformulate
 the expected disutility of path $r$
 as 
 \bgeqn\label{defi:pathdisutility-random}
 \bbe[u_r]& = &\theta_1 \bbe[C_r] + \theta_2\bbe[\max(C_r-\tau_k,0)] \nonumber\\
&=&  \theta_0 d_r + \theta_1 \mathbb{E}[C_r] + \theta_2\int_{\tau_k}^\infty (c-\tau_k)\psi_r(c)dc, \ \inmat{for} \ r=1,\cdots,N, \ k=1,\cdots,W,
 \edeqn
where $\bbe[\cdot]$ is the expectation taken w.r.t. the probability distribution of $C_r$.

In the forthcoming discussions, 
we 
will investigate UE in terms of path flow and arc flow. To this end, we need to represent the user's disutility 
in terms of 
$\by{f}$ and $\by{v}$.
We begin by rewriting the path travel time $\by{C}$ and the arc travel time $\by{T}$ 
in \eqref{relationship-flow-acr-path} and \eqref{relationship-CT} respectively as 
$\by{C}(\by{f},\xi)$ and $\by{T}(\by{v},\xi)$ to indicate 
explicitly 
that both quantities 
depend on $\by{f}$, $\by{v}$ and
random vector $\xi$, 
where the latter is known as arc performance function in transportation network.
The 
random vector $\xi$ 
is used to describe 
various uncertain factors 
affecting the 
travel time on a path (or arc) and we assume that $\xi$ takes values on a compact set $\Xi\subseteq\R^k$. Note that in the literature 
of transportation research, the sources of uncertainty behind travel time are not explicitly indicated, rather the capacity of arc are made random when the demand is deterministic. Here we adopt a new form of representation to separate the the decision vector $\by{f}$ ($\by{v}$) from random vector $\xi$.  

Recall that $\by{C}(\by{f},\xi)=\Delta^\top\by{T}(\by{v},\xi)$ and $\by{v}=\Delta\by{f}$, where $\Delta$ is the arc-path incidence matrix with entries $\delta_{a,r}=1$ if arc $a$ is on the path $r$, and $\delta_{a,r}=0$ otherwise.
In some 
literature of transportation research 
with uncertain supply,
the components of the arc travel time 
$\by{T}(\by{v},\xi)$ 
are described by a so-called {\em generalized Bureau of Public Roads (GBPR) function}: 
\bgeqn
\label{func:GBPR}
T_a(\by{v},\xi)=t_a^0\left(1+b_a\left(\frac{v_a}{C_a(\xi)}\right)^{n_a}\right),
\edeqn
where $t_a^0,b_a$ and $n_a$ are given parameters, $t_a^0$ denotes the arc free-flow travel time of arc $a$,
$C_a(\xi)$ denotes the random capacity of arc $a$ and $v_a$ represents the flow of arc $a$, see e.g.~\cite{YML09,ZCS11,ZZZY19}. Let $\by{\tau}=(\by{\tau}^\top_1,\cdots,\by{\tau}^\top_W)^\top\in\R^N$ be a block vector, where the $k$th block $\by{\tau}_k$ is  a column vector with dimensions $|R_k|$ and each  component takes the same value $\tau_k$.
Under the settings 
outlined above,
 we 
 can write the disutility function 
 of path as
 \bgeqn
 \by{u}(\by{f},\by{C}(\by{f},\xi))=\theta_0\by{d}+\theta_1\by{C}(\by{f},\xi)+\theta_2\max\{\by{C}(\by{f},\xi)-\by{\tau},0\},
 \edeqn
and the LAPUE model in \cite{Watling06} 
as the following stochastic variational inequality problem: 
\bgeqn
\inmat{(SVIP-path)}\quad \label{eq:SVI-LAPUE}
0\in \mathbb{E}_{P}[{\bm u}({\bm f},\by{C}(\by{f},\xi))]+{\cal N}_{D}({\bm f}),
\edeqn
where $\by{u}:\R^N\times\R^k\to\R^N$ is a continuous vector-valued function, 
$$
\mathcal{N}_D({\bm f})=\{{\bm w}\in \R^N: {\bm w}^T({\bm g}-{\bm f})\leq 0, \forall {\bm g}\in D\}
$$
is the normal cone to the convex set $D$ at ${\bm f}$, $\xi:\Omega\to\R^k$ is a random vector defined on a probability space $(\Omega,\mathcal{F},\mathbb{P})$ with probability distribution $P\in\mathscr{P}(\Xi)$, which can be reflected on the uncertain factors, $\mathbb{E}_{P}[\cdot]$
is the expected value w.r.t. $P$ and $\mathscr{P}(\Xi)$ denotes the set of all probability measures over $\Xi$. 
We call a solution to SVIP-path (\ref{eq:SVI-LAPUE}) 
an LAPUE and use 
$\by{\mathcal{F}}(P)$ 
to denote 
the set of all such solutions.
Note that in the case when $\theta_2=0$,
the LAPUE model collapses to UE model.
Moreover, by the connection between $\by{C}(\by{f},\xi)$ and $\by{T}(\by{v},\xi)$ as well as path flow $\by{f}$ and arc flow $\by{v}$, the SVP-path \eqref{eq:SVI-LAPUE} can be equivalently presented as follows:
\bgeqn
\inmat{(SVIP-arc)}\quad \label{eq:SVI-LAPUE-arc}
0\in \mathbb{E}_{P}[\tilde{\bm u}({\bm v},\by{T}(\by{v},\xi))]+{\cal N}_{\tilde{D}}({\bm v}),
\edeqn
where 
\bgeqn
 \tilde{\by{u}}(\by{v},\by{T}(\by{v},\xi))=\theta_0\by{d}+\theta_1\Delta^\top\by{T}(\by{v},\xi)+\theta_2\max\{\Delta^\top\by{T}(\by{v},\xi)-\by{\tau},0\},
 \edeqn
  and $\tilde{D}$ is defined as in \eqref{FS-arc}. 
  We also call a solution to SVIP-arc (\ref{eq:SVI-LAPUE-arc}) an LAPUE. In the rest of the paper, we use terminologies ``LAPUE'' and  ``solution to SVIP-arc''
 interchangeably.  
  The difference between the two forms of LAPUE is that the former is presented in terms of path flow whereas the latter is in arc flow.
The next theorem summarizes existence and uniqueness of 
LAPUE from 
Watling \cite{Watling06}
in terms of the solutions to 
SVIP-path \eqref{eq:SVI-LAPUE} and SVIP-arc (\ref{eq:SVI-LAPUE-arc}).
\begin{theorem}[Existence and uniqueness of LAPUE \cite{Watling06}]
\label{thm:exist-solu}
Consider the marginal path travel time density function for path $r$ and write it as $\psi_r(c_r;\mu_r)$ to denote its (partial) parameterisation by the mean path travel time $\mu_r$. 
Assume: 
(a) the functions 
$$
F_r(\mu_r)=\int_{\tau_k}^\infty(c-\tau_k)\psi_r(c;\mu_r)dc, \; \inmat{for}\;  r\in R_k, k=1,\cdots,W
$$
are well-defined, continuous and non-decreasing; (b)  $\theta_1>0$ and $\theta_2\geq 0$  in \eqref{defi:pathdisutility-random}; (c) the mean arc travel time functions $t^P_a(\by{v})$ are continuous and strictly monotone mapping. Then both SVIP-path \eqref{eq:SVI-LAPUE} and SVIP-arc (\ref{eq:SVI-LAPUE-arc}) have a solution,  and  SVIP-arc (\ref{eq:SVI-LAPUE-arc}) has a unique solution.
\end{theorem}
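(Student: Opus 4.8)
The plan is to read both the path inclusion \eqref{eq:SVI-LAPUE} and the arc inclusion \eqref{eq:SVI-LAPUE-arc} as finite-dimensional variational inequalities of the form $0\in\bm\Phi(\by{x})+\mathcal N_K(\by{x})$ and to deploy the two standard pillars of VI theory: a solution exists whenever $\bm\Phi$ is single-valued and continuous over a nonempty compact convex set $K$, and it is unique whenever $\bm\Phi$ is strictly monotone on $K$.

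\textbf{Existence.} First I would verify that the feasible sets are nonempty, compact and convex. The set $D=\{\by{f}\in\R^N_+:\Pi\by{f}=\by{q}\}$ in \eqref{FS-path} is a polyhedron; together with $\by{f}\ge 0$, the demand equations $\Pi\by{f}=\by{q}$ bound each coordinate by the total demand, so $D$ is compact and convex, and $\tilde D=\Delta D$ in \eqref{FS-arc} is a linear image of $D$, hence also nonempty, compact and convex. Next I would show that the expected disutility maps $\by{f}\mapsto\mathbb E_P[\by{u}(\by{f},\by{C}(\by{f},\xi))]$ and $\by{v}\mapsto\mathbb E_P[\tilde{\by u}(\by{v},\by{T}(\by{v},\xi))]$ are single-valued and continuous. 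The value-of-time terms are continuous because $\by{T}(\cdot,\xi)$ is continuous (the GBPR form \eqref{func:GBPR}) and $\Xi$ is compact, so limits may be passed through the expectation by dominated convergence; the lateness term is continuous by Assumption (a), since for $r\in R_k$ its expectation equals $F_r(\mu_r)$ with $\mu_r=\mathbb E[C_r]=\sum_a\delta_{ar}t^P_a(\by{v})$ a continuous function of the flow. A continuous single-valued map on a nonempty compact convex set admits a VI solution by the classical Hartman--Stampacchia theorem (equivalently, Brouwer's fixed-point theorem applied to the projection $\by{x}\mapsto P_K(\by{x}-\bm\Phi(\by{x}))$); applying this to $D$ and to $\tilde D$ yields solutions to SVIP-path and SVIP-arc.

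\textbf{Uniqueness of the arc flow.} Here I would prove that the arc-flow map is strictly monotone, since a strictly monotone VI map has at most one solution: if $\by{v}^1,\by{v}^2$ both solved the inequality, adding the two defining relations would give $(\bm\Phi(\by{v}^1)-\bm\Phi(\by{v}^2))^\top(\by{v}^1-\by{v}^2)\le 0$, contradicting strict monotonicity unless $\by{v}^1=\by{v}^2$. The map splits into the flow-independent attribute term $\theta_0\by{d}$ (trivially monotone), the value-of-time term built from $\by{t}^P(\by{v})$, and the $\theta_2\ge 0$ lateness term. By Assumption (c) the mean arc travel times $\by{t}^P(\by{v})$ are strictly monotone, and since $\theta_1>0$ this part contributes the strictly positive quantity $\theta_1(\by{t}^P(\by{v}^1)-\by{t}^P(\by{v}^2))^\top(\by{v}^1-\by{v}^2)$; as the sum of a strictly monotone map and a monotone map is again strictly monotone, it then suffices to show the lateness term is monotone. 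For this I would use Assumption (a): each $F_r$ is non-decreasing and $\mu_r=\sum_a\delta_{ar}t^P_a(\by{v})$ is non-decreasing in the arc flows, so the penalty aggregates monotonically along the arc-to-path incidence $\Delta$.

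\textbf{Main obstacle.} The delicate point is precisely the monotone contribution of the lateness term, because it is both nonsmooth (through $\max\{\cdot,0\}$) and nonseparable: $\theta_2\max\{\Delta^\top\by{T}(\by{v},\xi)-\by{\tau},0\}$ is naturally indexed by paths while \eqref{eq:SVI-LAPUE-arc} is paired against arc-flow differences. I must therefore track how $\Delta$ transports the path-indexed, $\mu_r$-monotone penalty back into a correctly signed arc-space inner product, exploiting the exact parameterisation $\mu_r=\sum_a\delta_{ar}t^P_a(\by{v})$ together with the monotonicity of $F_r$; this is where Assumptions (a) and (c) do the real work, and I expect it to be the crux of the argument. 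Finally I would note that uniqueness is claimed only for the arc flow and not for the path flow because $\Delta$ need not be injective: distinct path-flow solutions in $D$ can induce the same arc flow in $\tilde D$, so strict monotonicity in arc space pins down $\by{v}$ while leaving its path decomposition possibly non-unique, exactly matching the asymmetric conclusion of the theorem.
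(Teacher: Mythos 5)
First, a point of comparison: the paper itself contains no proof of this theorem --- it is quoted from Watling (2006), and the only commentary the paper adds is the remark that path-flow uniqueness fails because $\Delta$ need not have full column rank. So your proposal can only be measured against the standard argument of the cited source, and your skeleton is indeed that skeleton: existence via Hartman--Stampacchia on the nonempty compact convex sets $D$ and $\tilde D$, arc-flow uniqueness via strict monotonicity, and path-flow non-uniqueness from the kernel of $\Delta$. The existence half of your proposal is correct (boundedness of $D$ from $\Pi\by{f}=\by{q}$, $\by{f}\ge 0$; continuity of the expected disutility from assumptions (a) and (c)), and your closing remark about $\Delta$ matches the paper's.

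The genuine gap is in the uniqueness half, at exactly the step you yourself call the crux, and the mechanism you sketch for closing it does not work. Two concrete problems. First, you assert that $\mu_r=\sum_a\delta_{ar}t^P_a(\by{v})$ is ``non-decreasing in the arc flows''; assumption (c) gives strict monotonicity of the mapping $\by{t}^P$ in the operator sense, $(\by{t}^P(\by{v}^1)-\by{t}^P(\by{v}^2))^\top(\by{v}^1-\by{v}^2)>0$, which neither implies nor is implied by componentwise monotonicity of each $t^P_a$ --- asymmetric arc interactions are precisely what the VI framework (Smith, Dafermos) is designed to allow. Second, and more seriously, even granting componentwise monotonicity, the lateness term does not ``aggregate monotonically along $\Delta$.'' Non-decreasingness of $F_r$ only yields $F_r(\mu_r^1)-F_r(\mu_r^2)=\lambda_r(\mu_r^1-\mu_r^2)$ for some $\lambda_r\ge 0$, so the lateness contribution to the added-VI inner product is $\theta_2\,(\by{t}^P(\by{v}^1)-\by{t}^P(\by{v}^2))^\top\Delta\,\Lambda\,(\by{f}^1-\by{f}^2)$ with $\Lambda=\diag(\lambda_1,\dots,\lambda_N)$. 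For the $\theta_1$ term the identity $\sum_r(\mu_r^1-\mu_r^2)(f_r^1-f_r^2)=(\by{t}^P(\by{v}^1)-\by{t}^P(\by{v}^2))^\top(\by{v}^1-\by{v}^2)$ transports everything into the correctly signed arc-space inner product, but inserting $\Lambda$ destroys that identity: the individual terms $(\mu_r^1-\mu_r^2)(f_r^1-f_r^2)$ may be negative, and a nonnegatively weighted sum of them has no sign. Indeed the whole route is a dead end, not merely unfinished: positive-diagonal rescaling does not preserve monotonicity of an asymmetric monotone map. Take $\Delta=I$ and $\by{t}^P$ affine with Jacobian $\begin{pmatrix}1&2\\0&1.1\end{pmatrix}$ (strictly monotone); then $(\theta_1 I+\theta_2\Lambda)\nabla\by{t}^P$ has indefinite symmetric part once some $\lambda_r$ exceeds $0.1$, so the composite path-cost map $\by{f}\mapsto\theta_1\Delta^\top\by{t}^P(\Delta\by{f})+\theta_2\by{F}(\Delta^\top\by{t}^P(\Delta\by{f}))$ is simply not monotone under (a)--(c) alone. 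Consequently your reduction ``it suffices to show the lateness term is a monotone map'' cannot be completed; whatever argument establishes arc-flow uniqueness under these hypotheses (the content of Watling's theorem, which this paper does not reproduce) must be genuinely different from the one you outline.
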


Non-uniqueness of solution to SVIP-path \eqref{eq:SVI-LAPUE} is due to the fact that the matrix $\Delta$ is not necessarily  of full column rank. For the purposes of theoretical analysis,
we make the following assumption for SVIP-path \eqref{eq:SVI-LAPUE}.



\subsection{A new penalty function for lateness and the MLAPUE model}
\label{subsec2.2:MLAPUE}

In this paper, our focus is not on Watling's original LAPUE model, rather it is on its modification and statistical robustness of the resulting  UE against data perturbation. 
Specifically we 
propose to adopt a different penalty function for lateness as follows: 
\begin{equation}
\label{eq:Alexander-smoothing}
    h(z,t)=\left\{
    \begin{array}{lcl}
    z, & & \inmat{for}\; t<z,\\
    \frac{1}{4t}(z^2+2zt+t^2), & & \inmat{for}\; -t\leq z\leq t,\\
    0, & & \inmat{for}\; z<-t,
    \end{array}\right.
\end{equation}
where $t>0$ is a small positive number.
\begin{figure}[H]
  \centering
  \subfloat[]{
     \label{fig:penalpoint5}
     \includegraphics[scale=0.4]{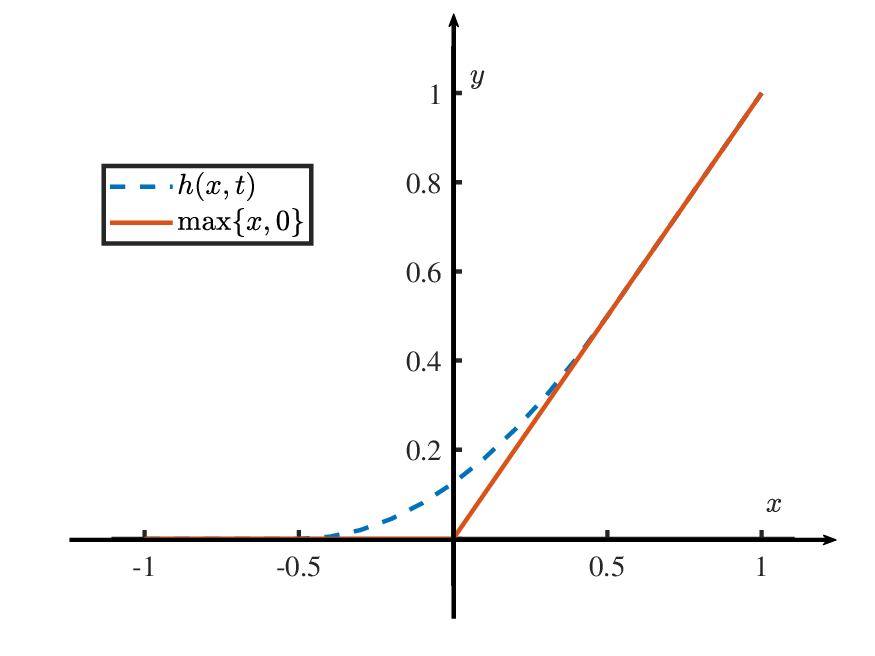}}
  \subfloat[]{
    \label{fig:penal1}
    \includegraphics[scale=0.4]{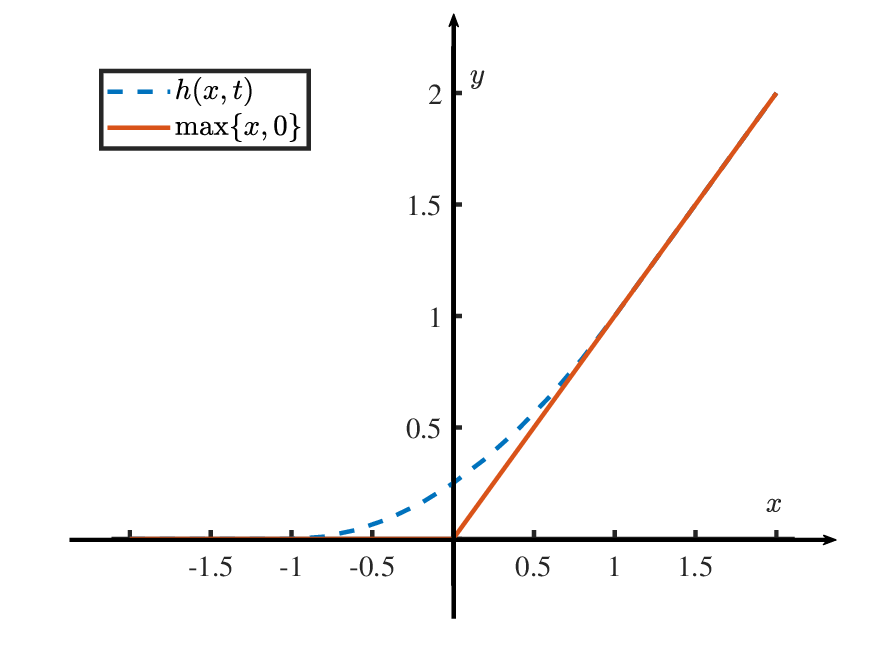}}
  \caption{(Color online) (a)-(b) Parameterized penalty function with parameter $t=0.5$ and $t=1$.
  }
  \label{fig:penal}
\end{figure}
Compared to the max penalty function $\max\{z,0\}$,
 $h(z,t)$ is parameterized by a positive number $t$,
 which means that when the lateness exceeds $-t$,
  there is a penalty and this penalty increases gradually as $z$ increases. 
  The new penalty function  gives 
a smooth transition of the boundary between lateness and no lateness.
In practice, we can interpret 
  this as driver/traveller becoming increasingly 
  worried when the travel time approaches the longest acceptable travel time $\tau_k$. When $z$ exceeds $t$, the penalty is back up to the Watling's maximum penalty. The new penalty function gives us some  flexibility to model a driver/traveller's risk preference on lateness by setting a different value of $t$.
    Mathematically, the new penalty function enjoys nicer property because it is continuously differentiable at $z=0$ and it approximates the maximum penalty function  when $t\to 0$. 
    The continuous differentiability allows use to undertake robustness/resilient 
    of the resulting LAPUE againt data perturbation, and this is indeed another major reason why we adopt the new penalty. Note that it is possible to adopt other parametric penalty functions 
    with similar properties, here we 
    concentrate on (\ref{eq:Alexander-smoothing}) as it is
    the simplest.

Let
\begin{equation}
\hat{\by{u}}(\by{f},\by{C}(\by{f},\xi),t)
:=\by{g}(\by{f},\by{C}(\by{f},\xi))+\by{h}(\by{f},\by{C}(\by{f},\xi),t),
\end{equation}
where
$\by{g}(\by{f},\by{C}
(\by{f},\xi))=\theta_0\by{d}+\theta_1\by{C}(\by{f},\xi)$,
$
{\by{h}}(\by{f},\by{C}(\by{f},\xi),t) :=\theta_2{h}(\by{C}(\by{f},\xi)-\by{\tau},t)
$
 and the penalty function $h$ acts componentwise. 
The modified LAPUE model can be written as the following modified SVIP
\bgeqn
\label{eq:SVI-LAPUE-smooting}
\inmat{(MSVIP)} \quad
0\in \mathbb{E}_{P}[\hat{{\bm u}}({\bm f},\by{C}(\by{f},\xi),t)]+{\cal N}_{D}({\bm f}).
\edeqn
We call a solution to the MSVIP 
{\em a modified LAPUE (MLAPUE)}
and use $\by{\mathcal{F}}_t(P)$ to denote 
the set of all such solutions.
We indicate in the notation explicitly the dependence on $t$ and $P$  because we will consider variation of $t$ later in this section and 
the case when $P$ is perturbed from Section 3.
The next proposition states 
existence and uniqueness of
an MLAPUE. 
\begin{proposition}[Existence and uniqueness of MLAPUE]
\label{coy:exist-solu-MSVIP}
Assume: (a) the conditions of Theorem \ref{thm:exist-solu} (b-c) hold, 
(b) the functions 
$$
F_r(\mu_r,t)=\int_{\tau_k-t}^{\infty}h(c-\tau_k,t)\psi_r(c;\mu_r)dc, \; \inmat{for}\;  r\in R_k, k=1,\cdots,W
$$
are well-defined, continuous and non-decreasing for fixed $t>0$, where $h(\cdot,t)$ is defined as in \eqref{eq:Alexander-smoothing} $\psi_r(c_r;\mu_r)$ is defined as in Theorem \ref{thm:exist-solu}. Then
$\inmat{MSVIP}$ has a solution, and the 
associated MLAPUE
 in terms of arc flow is unique.
\end{proposition}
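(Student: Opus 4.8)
The plan is to reduce the statement to Theorem \ref{thm:exist-solu} by showing that, for each fixed $t>0$, the modified model MSVIP has exactly the same structural form as Watling's SVIP, with the lateness functional $F_r(\mu_r)$ replaced by $F_r(\mu_r,t)$. The first thing I would do is compute the expected modified penalty. Since $h(z,t)=0$ for $z<-t$ by \eqref{eq:Alexander-smoothing}, the integrand $h(c-\tau_k,t)$ vanishes for $c<\tau_k-t$, so for $r\in R_k$
\[
\bbe_P[\theta_2\, h(C_r-\tau_k,t)]
=\theta_2\int_{\tau_k-t}^{\infty}h(c-\tau_k,t)\psi_r(c;\mu_r)\,dc
=\theta_2 F_r(\mu_r,t),
\]
where $\mu_r=\bbe[C_r]$ is the mean path travel time. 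Hence the expected modified disutility of path $r$ is $\bbe_P[\hat u_r]=\theta_0 d_r+\theta_1\mu_r+\theta_2 F_r(\mu_r,t)$, which is precisely the expected LAPUE disutility of \eqref{defi:pathdisutility-random} with $F_r(\cdot)$ replaced by $F_r(\cdot,t)$. I would then note that assumption (b) of the proposition grants $F_r(\cdot,t)$ the very properties (well-defined, continuous, non-decreasing) that Theorem \ref{thm:exist-solu}(a) requires of $F_r(\cdot)$, while assumption (a) carries conditions (b)--(c) of Theorem \ref{thm:exist-solu} over verbatim.

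For existence I would verify the two ingredients of the standard variational-inequality existence theorem. The feasible set $D$ in \eqref{FS-path} is a bounded polyhedron (each $f_r$ is nonnegative and, since $\sum_{r\in R_k}f_r=q_k$, bounded by $q_k$), hence nonempty, compact and convex; likewise $\tilde D$. The operator $\by{f}\mapsto\bbe_P[\hat{\by{u}}(\by{f},\by{C}(\by{f},\xi),t)]$ is continuous: the map $\by{f}\mapsto\by{\mu}(\by{f})=\Delta^\top\by{t}^P(\Delta\by{f})$ is continuous by assumption (a), $h(\cdot,t)$ is continuous (indeed $C^1$) for fixed $t$, and $F_r(\cdot,t)$ is continuous by assumption (b); compactness of $\Xi$ together with the at-most-linear growth of $h(\cdot,t)$ justifies passing the expectation through the limit. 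Continuity of the operator on the nonempty compact convex set $D$ then yields a solution of MSVIP, and the relation $\by{v}=\Delta\by{f}$ produces a corresponding arc-flow solution.

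For uniqueness of the arc flow I would invoke the uniqueness part of Theorem \ref{thm:exist-solu}. Because the expected modified disutility has the identical separable-in-mean form $\theta_0 d_r+\theta_1\mu_r+\theta_2 F_r(\mu_r,t)$, and because the two features on which that argument rests---strict monotonicity of $\by{t}^P$, so that distinct arc flows yield strictly separated mean arc travel times, together with the non-decreasing, $\theta_1$-dominated composite $\mu\mapsto\theta_1\mu+\theta_2 F_r(\mu,t)$ (strictly increasing since $\theta_1>0$, $\theta_2\geq 0$ and $F_r(\cdot,t)$ is non-decreasing)---are exactly what assumptions (a) and (b) guarantee, the same reasoning forces the arc-flow solution to be unique.

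The main obstacle, and the only place where the modification does real work, is the verification that the smooth penalty preserves these structural hypotheses. Well-definedness and continuity of $F_r(\cdot,t)$ are immediate from the continuity and linear growth of $h(\cdot,t)$, so the substantive point is monotonicity of $F_r(\cdot,t)$ in $\mu_r$, which is taken as a hypothesis in (b) exactly as it is for $F_r(\cdot)$ in Theorem \ref{thm:exist-solu}; were one to prove rather than assume it, this would require a shift/stochastic-dominance property of the parameterized family $\{\psi_r(\cdot;\mu_r)\}$. I would also flag, in the uniqueness step, that the penalty enters as a non-additive, path-specific term, so arc-flow uniqueness genuinely relies on the strict monotonicity of $\by{t}^P$ through the travel-time term to pin down the arc flow, rather than on monotonicity of the full path-disutility operator, which need not hold.
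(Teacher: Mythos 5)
Your proposal is correct and takes essentially the same route as the paper: the paper itself skips the proof, remarking only that the sole difference from Watling's Theorem \ref{thm:exist-solu} is the replacement of $F_r(\mu_r)$ by $F_r(\mu_r,t)$, which leaves the key properties of the disutility function intact. Your write-up is simply a fleshed-out version of that reduction (the computation $\bbe_P[\theta_2 h(C_r-\tau_k,t)]=\theta_2 F_r(\mu_r,t)$, the standard VI existence argument on the compact convex set $D$, and arc-flow uniqueness via strict monotonicity of $\by{t}^P$), all consistent with what the paper intends.
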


We skip the proof because 
the only difference between Proposition \ref{coy:exist-solu-MSVIP} and Theorem \ref{thm:exist-solu} 
is that $F_r(\mu_r)$ is replaced by $F_r(\mu_r,t)$.
The replacement does not affect the key properties of the disutility function $u$ in the proof of Theorem  \ref{thm:exist-solu}, which means the proof of Theorem \ref{thm:exist-solu} is applicable to that of Proposition \ref{coy:exist-solu-MSVIP}.

Next, we investigate the relationship between 
MLAPUE and LAPUE. We do so by 
showing $\mathbb{D}(\by{\mathcal{F}}_t(P),
\by{\mathcal{F}}(P))\to 0
$ as $t\to 0$, where $\mathbb{D}(A,B)$ denotes the deviation distance 
$\mathbb{D}(A,B)=\sup_{x\in A}d(x,B)$, in other words,
we can interpret MLAPUE as an approximation of LAPUE as $t$ is sufficiently small.
To this end, we need to make some 
technical 
assumptions.
\begin{assumption}
\label{asm:compact-MSVIP}
    For fixed $t>0$, there exists a convex compact set $\mathbb{F}_t\subset\R^N$ which contains $\by{\mathcal{F}}_t(P)+b\mathcal{B}$ for some positive number $b$, where $\mathcal{B}$
    is a unit ball in $\R^N$.
    Let $\mathbb{V}_t:=\{\by{v}=\Delta\by{f}|\by{f}\in\mathbb{F}_t\}\subseteq\R^A$.
\end{assumption}

The assumption virtually requires
$\by{\mathcal{F}}_t(P)$ to be bounded. We introduce set $\mathbb{F}_t$ containing an open neighborhood of $\by{\mathcal{F}}_t(P)$
purely for the convenience of  theoretical analysis in the forthcoming discussions in Section \ref{sec3:SR-MLAPUE}.
The next assumption requires the travel time function to be continuous differentiable
and Lipschitz continuous with respect to 
arc flow, and integrablly boundedness both of which are needed in the forthcoming theoretical analysis in this section and the next section.


 \begin{assumption}
\label{asm:T-value}
    For the probability distribution $P\in\mathscr{P}(\Xi)$,
    \begin{itemize}
       \item [{\rm (a)}] $\by{T}(\cdot,\xi)$ is continuously differentiable for every $\xi\in\Xi$;
        \item [{\rm (b)}] there exists some measurable function $\Phi_1(\xi)$ such that $\sup_{\by{v}\in\mathbb{V}_t}\|\by{T}(\by{v},\xi)\|\leq\Phi_1(\xi)$ and $\mathbb{E}_P[\Phi_1(\xi)]<\infty$;
        \item [{\rm (c)}] there exists a positive measurable function $\Phi_2(\xi)$ such that $\mathbb{E}_P[\Phi_2(\xi)]<\infty$ and 
        $$
        \|\by{T}(\by{v},\xi)-\by{T}(\by{v}',\xi)\|\leq\Phi_2(\xi)\|\by{v}-\by{v}'\|, 
        $$
        for all $\by{v},\by{v}'\in\mathbb{V}_t$, and $\xi\in\Xi$, where $\mathbb{V}_t$ is defined as in Assumption \ref{asm:compact-MSVIP}.
    \end{itemize}
\end{assumption}

Assumption \ref{asm:T-value} is satisfied when each component of $\by{T}(\by{v},\xi)$ is BPR function defined as in \eqref{func:GBPR}. We will come back to this in Section \ref{sec:alldata}.
Let 
\bgeqn
\label{set:well-define}
\hat{\mathscr{P}}:=\left\{Q\in\mathscr{P}(\Xi):\mathbb{E}_Q[\Phi_1(\xi)]<\infty,\mathbb{E}_Q[\Phi_2(\xi)]<\infty\right\}.
\edeqn
The next proposition states some important properties of user's disutility function 
under the new penalty function (\ref{eq:Alexander-smoothing}).

\begin{proposition}[Properties of the new disutility function]
\label{prop:property-of-u}
Under Assumption \ref{asm:T-value}, the following assertions hold
for the parametric disutility function $\hat{\by{u}}(\by{f},\by{C}(\by{f},\xi),t)$. 
\begin{itemize}
        \item [{\rm (i)}] $\hat{\by{u}}(\cdot,\by{C}(\cdot,\xi),t)$ is continuously differentiable for every $\xi\in\Xi$ and $t>0$;
         \item [{\rm (ii)}]
        There exists some measurable function $\tilde{\Phi}_1(\xi,t)$ such that $\sup_{\by{f}\in\mathbb{F}_t}\|\hat{\by{u}}(\by{f},\by{C}(\by{f},\xi),t)\|\leq \tilde{\Phi}_1(\xi,t)$ 
        and $\mathbb{E}_P[\tilde{\Phi}_1(\xi,t)]<\infty$ for any fixed $t>0$.

        \item [{\rm (iii)}] There exists a positive measurable function $\Phi_2(\xi)$ such that $\mathbb{E}_P[\Phi_2(\xi)]<\infty$ and for every $\xi\in\Xi$ and $t>0$, 
        $$
        \|\hat{\by{u}}(\by{f},\by{C}(\by{f},\xi),t)-\hat{\by{u}}(\by{f}',\by{C}(\by{f}',\xi),t)\|\leq \eta_{\Delta^\top}\eta_{\Delta}(\theta_1+\theta_2)\Phi_2(\xi)\|\by{f}-\by{f}'\|, \ \forall\by{f},\by{f}'\in\mathbb{F}_t,
        $$
    \end{itemize}
where $\mathbb{F}_t$ is defined as in Assumption \ref{asm:compact-MSVIP} and $\tilde{\Phi}_1(\xi,t)=\eta_{\Delta^\top}(\theta_1+\theta_2)\Phi_1(\xi)+\theta_0\|\by{d}\|+\theta_2\|\by{\tau}\|+t$.
\end{proposition}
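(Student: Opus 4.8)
The plan is to reduce all three assertions to elementary regularity and growth properties of the scalar penalty $h(\cdot,t)$ defined in \eqref{eq:Alexander-smoothing}, and then to propagate these through the linear maps $\Delta,\Delta^\top$ and the structural hypotheses on $\by{T}$ in Assumption \ref{asm:T-value}. First I would record, for fixed $t>0$, the following facts obtained by direct inspection of the three branches. On the middle branch $h(z,t)=(z+t)^2/(4t)$, so $\partial_z h(z,t)=(z+t)/(2t)$; evaluating at the breakpoints gives $\partial_z h(t^-,t)=1$ and $\partial_z h(-t^+,t)=0$, which match the derivatives $1$ and $0$ of the outer branches, whence $h(\cdot,t)\in C^1(\R)$. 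Moreover $0\le\partial_z h(z,t)\le 1$ everywhere, so $h(\cdot,t)$ is nondecreasing and globally $1$-Lipschitz in $z$. Finally a one-line comparison on each branch yields the uniform gap $0\le h(z,t)-\max\{z,0\}\le t/4$, and in particular $h(z,t)\le |z|+t$.

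With these scalar facts in hand, assertion (i) follows by composition. Writing $\by{C}(\by{f},\xi)=\Delta^\top\by{T}(\Delta\by{f},\xi)$, Assumption \ref{asm:T-value}(a) makes $\by{f}\mapsto\by{T}(\Delta\by{f},\xi)$ a $C^1$ map (composition of the $C^1$ map $\by{T}(\cdot,\xi)$ with the linear map $\Delta$), so $\by{g}(\by{f},\by{C}(\by{f},\xi))=\theta_0\by{d}+\theta_1\Delta^\top\by{T}(\Delta\by{f},\xi)$ is $C^1$ in $\by{f}$. The penalty part $\by{h}=\theta_2 h(\by{C}(\by{f},\xi)-\by{\tau},t)$ acts componentwise through the $C^1$ scalar map $h(\cdot,t)$, so by the chain rule it is $C^1$ in $\by{f}$ as well; summing gives (i). The only point that genuinely requires checking is the differentiability of $h(\cdot,t)$ at $z=\pm t$, which is exactly the derivative matching recorded above.

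For the bound (ii) I would use $\|\hat{\by{u}}\|\le\|\by{g}\|+\|\by{h}\|$ together with the operator-norm estimate $\|\by{C}(\by{f},\xi)\|=\|\Delta^\top\by{T}(\Delta\by{f},\xi)\|\le\eta_{\Delta^\top}\|\by{T}(\by{v},\xi)\|\le\eta_{\Delta^\top}\Phi_1(\xi)$, the last step by Assumption \ref{asm:T-value}(b) and $\by{v}=\Delta\by{f}\in\mathbb{V}_t$. Then $\|\by{g}\|\le\theta_0\|\by{d}\|+\theta_1\eta_{\Delta^\top}\Phi_1(\xi)$, while the growth gap gives $\|\by{h}\|=\theta_2\|h(\by{C}-\by{\tau},t)\|\le\theta_2(\|\by{C}\|+\|\by{\tau}\|)+O(t)\le\theta_2\eta_{\Delta^\top}\Phi_1(\xi)+\theta_2\|\by{\tau}\|+O(t)$, the $O(t)$ term collecting the uniform smoothing correction $h-\max\{\cdot,0\}$. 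Adding these and absorbing the smoothing constant into the stated $+t$ term yields $\tilde{\Phi}_1(\xi,t)=\eta_{\Delta^\top}(\theta_1+\theta_2)\Phi_1(\xi)+\theta_0\|\by{d}\|+\theta_2\|\by{\tau}\|+t$, whose $P$-integrability is immediate from $\mathbb{E}_P[\Phi_1(\xi)]<\infty$. For (iii), I would first note that componentwise $1$-Lipschitzness of $h(\cdot,t)$ upgrades to $\|h(\by{z},t)-h(\by{z}',t)\|\le\|\by{z}-\by{z}'\|$, since $\sum_i|h(z_i,t)-h(z_i',t)|^2\le\sum_i|z_i-z_i'|^2$. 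Using $\by{C}(\by{f},\xi)-\by{C}(\by{f}',\xi)=\Delta^\top(\by{T}(\Delta\by{f},\xi)-\by{T}(\Delta\by{f}',\xi))$, Assumption \ref{asm:T-value}(c), and $\|\Delta\by{f}-\Delta\by{f}'\|\le\eta_{\Delta}\|\by{f}-\by{f}'\|$, one gets $\|\by{C}(\by{f},\xi)-\by{C}(\by{f}',\xi)\|\le\eta_{\Delta^\top}\eta_{\Delta}\Phi_2(\xi)\|\by{f}-\by{f}'\|$; feeding this into $\|\by{g}(\by{f})-\by{g}(\by{f}')\|\le\theta_1\|\by{C}-\by{C}'\|$ and $\|\by{h}(\by{f})-\by{h}(\by{f}')\|\le\theta_2\|\by{C}-\by{C}'\|$ and summing produces the claimed modulus $\eta_{\Delta^\top}\eta_{\Delta}(\theta_1+\theta_2)\Phi_2(\xi)$, with the same $\Phi_2$ as in Assumption \ref{asm:T-value}(c).

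The genuine work, and the only step I expect to need care, is the scalar analysis of $h(\cdot,t)$: verifying the $C^1$ gluing at $z=\pm t$, the uniform derivative bound $0\le\partial_z h\le 1$, and the growth gap against $\max\{\cdot,0\}$. Everything after that is bookkeeping with the triangle inequality and the operator norms $\eta_{\Delta},\eta_{\Delta^\top}$. The one subtlety worth flagging is the precise additive smoothing constant in (ii), which depends mildly on $t$, $\theta_2$ and the number of components and is here absorbed into the crude $+t$ term of $\tilde{\Phi}_1$.
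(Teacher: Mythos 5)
Your proposal is correct and follows essentially the same route as the paper's proof: decompose $\hat{\by{u}}$ into the linear part $\by{g}$ and the penalty part, push the bounds through the operator norms $\eta_{\Delta^\top},\eta_{\Delta}$ using Assumption \ref{asm:T-value}(a)--(c), and exploit that $h(\cdot,t)$ is $C^1$, nondecreasing and $1$-Lipschitz so the penalty contributes modulus $\theta_2$ and growth $\|\by{C}-\by{\tau}+\by{t}\|$. The only difference is one of detail, not of method: you verify the scalar facts about $h$ (the $C^1$ gluing at $z=\pm t$, the derivative bound, the gap against $\max\{z,0\}$) explicitly, and you honestly flag the dimension-dependent smoothing constant absorbed into the ``$+t$'' term of $\tilde{\Phi}_1$ --- a point the paper's own proof glosses over in exactly the same way.
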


Proposition \ref{prop:property-of-u} (ii) implies that $\mathbb{E}_P[\hat{\by{u}}(\by{f},\by{C}(\by{f},\xi),t)]$ is well-defined for each 
fixed $ t\geq0$, $\by{f}\in\mathbb{F}$  and $P\in\hat{\mathscr{P}}$, where $\hat{\mathscr{P}}$ is defined as \eqref{set:well-define}. Together with (i) and (iii), $\mathbb{E}_P[\hat{\by{u}}(\by{f},\by{C}(\by{f},\xi),t)]$ is continuously differentiable in $\by{f}$, this is, $\nabla_{\by{f}}\mathbb{E}_P[\hat{\by{u}}(\by{f},\by{C}(\by{f},\xi),t)]=\mathbb{E}_P[\nabla_{\by{f}}\hat{\by{u}}(\by{f},\by{C}(\by{f},\xi),t)]$ for each $\by{f}\in\mathbb{F}$, fixed $t>0$ and $P\in\hat{\mathscr{P}}$, where $\nabla_{\by{f}}\hat{\by{u}}(\by{f},\by{C}(\by{f},\xi),t)$ denotes the Jacobian of $\hat{\by{u}}$ in $\by{f}$.

Next, we need to introduce a notion of so-called strong regularity of a generalized equation.
This is because we will use generalized equation as a mathematical framework for undertaking the desired theoretical analysis of MLAPUE defined by
MSVIP, a special generalized equation.

\begin{definition}
[Strong regularity]
Let $Z$ and $W$ be Banach spaces, $\phi:Z\to W$ be a continuously differentiable mapping,
${\cal N}:Z \rightrightarrows W$ be a set-valued mapping.
Consider the generalized equation:
find ${\bm z}\in Z$ such that 
\bgeqn
\label{eq:abstract_GE-0}
0\in \phi(z)+ {\cal N}(z).
\edeqn
A solution $z_0$ is 
said to be strongly regular
if there exist neighborhoods ${\cal V}_Z$ and ${\cal V}_W$ of $z_0\in Z$ and $0\in W$, respectively,
such that for every $\delta\in {\cal V}_W$,
the linearized abstract generalized equation 
$$
\delta\in \phi(z_0)+D\phi(z_0)(z-z_0)+{\cal N}(z)
$$
has a unique solution in ${\cal V}_Z$, 
denoted by $\zeta(\delta)$,
and the mapping $\zeta: {\cal V}_W \to {\cal V}_{Z}$ is Lipschitz continuous with constant $c$.
\end{definition}

In the case that  ${\cal N}(z)=0$, the strong regularity conditions is equivalent to 
the case that $D\phi(z_0)$ is  one-to-one and onto mapping.
With the proposition and the notion of 
strong regularity, we are now ready to state our main result in this section.

 {\color{black}
 \begin{theorem}[Approximation of MLAPUE to LAPUE]
 \label{thm:smth-approx}
 Under 
 the conditions of Proposition \ref{coy:exist-solu-MSVIP} and Assumption~\ref{asm:T-value}, 
the following assertions hold.
 \begin{itemize}
     \item [{\rm (i)}]
 If there exists an integral function $\varphi(\cdot)$ such that each component of $\hat{\by{u}}(\by{f},\by{C}(\by{f},\xi),t)$ is  bounded by $\varphi(\xi)$, then 
    \begin{equation}
    \limsup_{t\downarrow 0}\by{\mathcal{F}}_t(P)\subset \by{\mathcal{F}}(P),
    \end{equation}
    where $\limsup_{t\downarrow 0}\by{\mathcal{F}}_t(P)$ denotes the outer limit of $\by{\mathcal{F}}_t(P)$ at point $0$ (see 
    Definition \ref{def:outerlim} in the appendix).

\item [{\rm (ii)}] 
Let $\by{f}(P)\in\by{\mathcal{F}}(P)$.
If  $\Upsilon(\by{f}):=\phi(\by{f},P)+\mathcal{N}_D(\by{f})$ is metrically regular (see Definition \ref{def:MR}) at $\by{f}(P)$ for 0 with regularity modulus $\alpha$,
then there exist
a closed neighborhood $\mathcal{V}_{\by{f}(P)}$ of $\by{f}(P)$ and a sufficiently small constant $t_0>0$ such that 
\bgeqn
\label{thm3.1-equ2}
d(\by{f}_t(P),\by{\mathcal{F}}(P))\leq\alpha\sup_{\by{f}\in\mathcal{V}_{\by{f}(P)}}\|\hat{\phi}_t(\by{f},P)-\phi(\by{f},P)\|
\edeqn
for all $\by{f}_t(P)\in\by{\mathcal{F}}_t(P)\cap\mathcal{V}_{\by{f}(P)}$ and $t\in(0,t_0]$, where 
\bgeqn 
\hat{\phi}_t(\by{f},P) :=\mathbb{E}_P[\hat{\by{u}}(\by{f},\by{C}(\by{f},\xi),t)] \;
\inmat{and} \; 
\phi(\by{f},P) :=\mathbb{E}_P[\by{u}(\by{f},\by{C}(\by{f},\xi))],
\label{eq:hat-phi}
\edeqn
$\|\cdot\|$ denotes the Euclidean norm on $\R^N$
and $d(b,B)$ denotes the distance from a point $b$ to a set $B$; if $\Upsilon$ is strongly metrically regular at $\by{f}(P)$ for 0 with same regularity modulus, then there exist a unique $\by{f}_t(P)\in\by{\mathcal{F}}_t(P)\cap\mathcal{V}_{\by{f}(P)}$ such that
\bgeqn
\label{thm3.1-equ3}
 \|\by{f}_t(P)-\by{f}(P)\|\leq\alpha\sup_{\by{f}\in\mathcal{V}_{\by{f}(P)}}\|\hat{\phi}_t(\by{f},P)-\phi(\by{f},P)\|
\edeqn
for all $t\in(0,t_0]$
, where $\Upsilon^{-1}(0):=\{\by{f}\in\R^N: 0\in\Upsilon(\by{f})\}=\by{\mathcal{F}}(P)$.
 \end{itemize}
\end{theorem}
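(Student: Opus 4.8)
The plan is to handle the two assertions by different mechanisms: assertion (i) is a set-convergence result that I would obtain from a direct limiting argument, while assertion (ii) is a quantitative perturbation estimate that I would read off from (strong) metric regularity of the LAPUE generalized equation $\Upsilon$.

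For (i), I would take an arbitrary $\by{f}^\ast\in\limsup_{t\downarrow 0}\by{\mathcal{F}}_t(P)$ and, by the definition of the outer limit, produce sequences $t_k\downarrow 0$ and $\by{f}_{t_k}\in\by{\mathcal{F}}_{t_k}(P)$ with $\by{f}_{t_k}\to\by{f}^\ast$. Each iterate satisfies $-\hat{\phi}_{t_k}(\by{f}_{t_k},P)\in\mathcal{N}_D(\by{f}_{t_k})$. The first step is to show $\hat{\phi}_{t_k}(\by{f}_{t_k},P)\to\phi(\by{f}^\ast,P)$. Here I would use that the penalty $h(z,t)$ is jointly continuous in $(z,t)$ with $h(z,t)\to\max\{z,0\}$ as $t\downarrow 0$, that $\by{C}(\cdot,\xi)$ is continuous, and that each component of $\hat{\by{u}}$ is dominated by the integrable envelope $\varphi$; the dominated convergence theorem then lets me pass the limit through the expectation despite the simultaneous motion of $\by{f}_{t_k}$ and $t_k$. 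The second step invokes closedness of $\gph\,\mathcal{N}_D$ (valid since $D$ is closed and convex): passing to the limit in the inclusion gives $-\phi(\by{f}^\ast,P)\in\mathcal{N}_D(\by{f}^\ast)$, i.e.\ $\by{f}^\ast\in\by{\mathcal{F}}(P)$, which is the desired containment.

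For (ii), the key idea is to view the MLAPUE inclusion as a canonically perturbed copy of the LAPUE inclusion. For $\by{f}_t(P)\in\by{\mathcal{F}}_t(P)$ one has $-\hat{\phi}_t(\by{f}_t(P),P)\in\mathcal{N}_D(\by{f}_t(P))$, so adding and subtracting $\phi$ yields
\[
y_t:=\phi(\by{f}_t(P),P)-\hat{\phi}_t(\by{f}_t(P),P)\in\phi(\by{f}_t(P),P)+\mathcal{N}_D(\by{f}_t(P))=\Upsilon(\by{f}_t(P)),
\]
whence $d(0,\Upsilon(\by{f}_t(P)))\le\|y_t\|\le\sup_{\by{f}\in\mathcal{V}_{\by{f}(P)}}\|\hat{\phi}_t(\by{f},P)-\phi(\by{f},P)\|$. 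Applying metric regularity of $\Upsilon$ at $\by{f}(P)$ for $0$ with modulus $\alpha$, I get $d(\by{f}_t(P),\Upsilon^{-1}(0))\le\alpha\,d(0,\Upsilon(\by{f}_t(P)))$ for $\by{f}_t(P)$ in $\mathcal{V}_{\by{f}(P)}$; combining with the previous line and $\Upsilon^{-1}(0)=\by{\mathcal{F}}(P)$ gives \eqref{thm3.1-equ2}, and no existence statement is needed in this case. In the strongly metrically regular case I would instead use the single-valued Lipschitz localization $s$ of $\Upsilon^{-1}$ with $s(0)=\by{f}(P)$ and modulus $\alpha$: any MLAPUE in $\mathcal{V}_{\by{f}(P)}$ is then forced to equal $s(y_t)$ and is therefore unique, and $\|\by{f}_t(P)-\by{f}(P)\|=\|s(y_t)-s(0)\|\le\alpha\|y_t\|$ delivers \eqref{thm3.1-equ3}.

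The main obstacle I anticipate is ensuring that MLAPUE solutions genuinely fall inside the regularity neighborhood $\mathcal{V}_{\by{f}(P)}$ for all small $t$, so that the estimates are non-vacuous (in the plain metrically regular case their existence near $\by{f}(P)$ is supplied by Proposition \ref{coy:exist-solu-MSVIP} together with part (i), and in the strong case by the localization $s$). This rests on a uniform perturbation bound: from the explicit form of $h$ in \eqref{eq:Alexander-smoothing} one checks $|h(z,t)-\max\{z,0\}|\le t/4$ for every $z$ (the gap equals $(z-t)^2/(4t)$ on $[0,t]$ and $(z+t)^2/(4t)$ on $[-t,0]$), so that $\sup_{\by{f}\in\mathbb{F}_t}\|\hat{\phi}_t(\by{f},P)-\phi(\by{f},P)\|\le\theta_2\sqrt{N}\,t/4\to 0$ as $t\downarrow 0$. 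This estimate simultaneously forces the right-hand sides of \eqref{thm3.1-equ2}--\eqref{thm3.1-equ3} to vanish and, via (strong) metric regularity, guarantees an MLAPUE within $\mathcal{V}_{\by{f}(P)}$ for $t$ below a threshold $t_0$. The remaining care is to choose $t_0$ so that $y_t$ lands in the domain of the localization, which I would fix by shrinking $\mathcal{V}_{\by{f}(P)}$ and $t_0$ together using the bound just displayed.
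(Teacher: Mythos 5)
Your proposal is correct and follows essentially the same route as the paper's own proof: part (i) via dominated convergence together with closedness (upper semicontinuity) of the normal cone mapping, and part (ii) by applying (strong) metric regularity to the residual $\hat{\phi}_t(\cdot,P)-\phi(\cdot,P)$ treated as a canonical perturbation of $\Upsilon$. Two minor improvements on your side: inserting $y_t\in\Upsilon(\by{f}_t(P))$ directly streamlines the paper's detour through deviations of normal cones truncated by a ball $s\mathcal{B}$, and your explicit estimate $|h(z,t)-\max\{z,0\}|\le t/4$ (hence $\sup_{\by{f}}\|\hat{\phi}_t(\by{f},P)-\phi(\by{f},P)\|\le \theta_2\sqrt{N}\,t/4$) substantiates the claim that MLAPUE solutions remain in the regularity neighborhood for small $t$, which the paper dismisses as ``easy to prove.''
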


}
Theorem \ref{thm:smth-approx} (i) guarantees the convergence of MLAPUE to LAPUE
as $t$ is driven to $0$. Theorem \ref{thm:smth-approx} (ii) is an implicit function theorem, where \eqref{thm3.1-equ3} gives a bound for the difference of $\by{f}_t(P)$ and $\by{f}(P)$ in terms of the uniform difference of $\hat{\phi}_t(\by{f},P)$ and $\phi(\by{f},P)$. 
Metric regularity is a generalization of Jacobian nonsingularity of a vector-valued function to a set-valued mapping \cite{Robinson76}. 
It is virtually about Lipschitz continuity of the inverse of a set-valued mapping, that is, $\Upsilon(\by{f}):=\phi(\by{f},P)+\mathcal{N}_D(\by{f})$
in our context. Strong regularity requires  the inverse of the set-valued mapping to
be single-valued in the considered neighborhoods.



{\color{black}

\subsection{Sample average approximation and
perturbation of distribution }
\label{subsec2.3:SAA}

In practice, we may use empirical data to solve 
 MSVIP problem \eqref{eq:SVI-LAPUE-smooting}, that is, use empirical travel time dataset 
to approximate 
the probability distribution
of true travel time. 
For example, when the travel time function ${T}_a(\by{v},\xi)$ is the BPR function given in \eqref{func:GBPR}, we can 
use the empirical data of $T_a(\by{v},\xi)$ to derive the empirical data of random capacity $C_a(\xi)$.
To facilitate theoretical analysis, 
we 
assume that $\xi^1,\cdots,\xi^M$
are the corresponding samples and they
are generated by the true probability distribution $P$ of $\xi$. We consider the sample average approximation of MSVIP 
\bgeqn
\label{eq:SVI-LAPUE-smooting-SAA}
\inmat{(SAA-MSVIP)} \quad0\in \frac{1}{M}\sum_{i=1}^M\hat{{\bm u}}({\bm f},\by{C}(\by{f},\xi^i),t)+{\cal N}_{D}({\bm f}).
\edeqn
Let $P_M :=\frac{1}{M}\sum_{i=1}^M\delta_{\xi^i}$ be an empirical distribution of $\xi^1,\cdots,\xi^M$, where $\delta_{\xi^i}$ denotes the Dirac measure at $\xi^i$. 
We use $\by{\mathcal{F}}_t(P_M)$ to denote the set of solutions to \eqref{eq:SVI-LAPUE-smooting-SAA}. 
In practice, we often obtain only one solution
instead of multiple solutions of \eqref{eq:SVI-LAPUE-smooting-SAA}. Since the solution is based on sample data, we call it a statistical estimator of some $\by{f}_t(P)\in\by{\mathcal{F}}_t(P)$.
The next theorem states 
existence of a solution to \eqref{eq:SVI-LAPUE-smooting-SAA} and 
convergence of the solution as the sample size goes to infinity. 

\begin{theorem} 
\label{prop:SAA-uniquesolu}
   Let $\by{f}_t(P)$ be a strongly regular solution to MSVIP problem \eqref{eq:SVI-LAPUE-smooting}, i.e., there exist neighborhoods $\mathcal{V}_0$ and $\mathcal{V}_{\by{f}_t(P)}$ of $0\in\R^N$ and $\by{f}_t(P)$, respectively, such that for $\delta\in\mathcal{V}_0$, 
   the following generalized equation 
   \begin{equation}
   \label{GE-LGE}
\delta\in\mathbb{E}_P[\hat{\by{u}}(\by{f}_t(P),C(\by{f}_t(P),\xi),t)]+\nabla_{\by{f}}\mathbb{E}_P[\hat{\by{u}}(\by{f}_t(P),\by{C}(\by{f}_t(P),\xi),t)](\by{f}-\by{f}_t(P))+\mathcal{N}_D(\by{f})  
   \end{equation} 
   has a unique solution $\by{f}_t(\delta)$ in $\mathcal{V}_{\by{f}_t(P)}$ which is Lipschitz continuous in $\delta$ over $\mathcal{V}_{\by{f}_t(P)}$. 
   Let $\hat{\phi}_t(\by{f},P_M)=\frac{1}{M}\sum_{i=1}^M\hat{{\bm u}}({\bm f},\by{C}(\by{f},\xi^i),t)$.
   If, in addition, both $\hat{\phi}_t(\by{f},P)$ and $\hat{\phi}_t(\by{f},P_M)$ are continuously differentiable in a neighborhood of $\by{f}_t(P)$ and 
   $\|\hat{\phi}_t(\by{f},P_M)-\hat{\phi}_t(\by{f},P)\|_{1,\mathcal{V}_{\by{f}_{t}(P)}}\to 0$ with probability 1 (w.p.1) as $M\to\infty$, where $\|\cdot\|_{1,\mathcal{V}_{\by{f}_{t}(P)}}$ is a norm defined as in \eqref{def:norm-psi},
   then w.p.1 for $M$ large enough the SAA-MSVIP problem \eqref{eq:SVI-LAPUE-smooting-SAA} possesses a unique solution $\by{f}_t(P_M)$ in a neighborhood of $\by{f}_t(P)$ and $\by{f}_t(P_M)\to\by{f}_t(P)$ w.p.1 as $M\to\infty$. 
\end{theorem}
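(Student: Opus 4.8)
The plan is to cast this as a standard perturbation result for strongly regular generalized equations in the spirit of Robinson's implicit-function approach: the SAA problem \eqref{eq:SVI-LAPUE-smooting-SAA} is a small perturbation of the true MSVIP \eqref{eq:SVI-LAPUE-smooting}, and strong regularity at $\by{f}_t(P)$ lets us reduce the perturbed inclusion to a contraction fixed-point problem. Throughout write $\phi(\by{f}):=\hat{\phi}_t(\by{f},P)$ and $\phi_M(\by{f}):=\hat{\phi}_t(\by{f},P_M)$, and let $L(\by{f}):=\phi(\by{f}_t(P))+\nabla_{\by{f}}\phi(\by{f}_t(P))(\by{f}-\by{f}_t(P))$ be the linearization appearing in \eqref{GE-LGE}. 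The first observation is that the solution map $\delta\mapsto\by{f}_t(\delta)$ supplied by the strong regularity hypothesis satisfies $\by{f}_t(0)=\by{f}_t(P)$, since setting $\delta=0$ and $\by{f}=\by{f}_t(P)$ in \eqref{GE-LGE} recovers the original inclusion $0\in\phi(\by{f}_t(P))+\mathcal{N}_D(\by{f}_t(P))$, and the linearized solution in $\mathcal{V}_{\by{f}_t(P)}$ is unique.

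Next I would rewrite the SAA inclusion as a fixed-point equation. Adding $L(\by{f})-\phi_M(\by{f})$ to both sides of $0\in\phi_M(\by{f})+\mathcal{N}_D(\by{f})$ gives the equivalent inclusion $\delta_M(\by{f})\in L(\by{f})+\mathcal{N}_D(\by{f})$, where
\[
\delta_M(\by{f}):=\phi(\by{f}_t(P))+\nabla_{\by{f}}\phi(\by{f}_t(P))(\by{f}-\by{f}_t(P))-\phi_M(\by{f}).
\]
Provided $\delta_M(\by{f})\in\mathcal{V}_0$, strong regularity identifies the unique solution of the right-hand inclusion as $\by{f}_t(\delta_M(\by{f}))$, so that solving \eqref{eq:SVI-LAPUE-smooting-SAA} locally is equivalent to finding a fixed point of $T_M(\by{f}):=\by{f}_t(\delta_M(\by{f}))$. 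The crux is to show that $T_M$ contracts a small closed ball $\mathbb{B}_\rho:=\{\by{f}:\|\by{f}-\by{f}_t(P)\|\le\rho\}\subset\mathcal{V}_{\by{f}_t(P)}$ into itself, w.p.1 for $M$ large. Using
\[
\delta_M(\by{f})-\delta_M(\by{f}')=\int_0^1\big[\nabla_{\by{f}}\phi(\by{f}_t(P))-\nabla_{\by{f}}\phi_M(\by{f}'+s(\by{f}-\by{f}'))\big](\by{f}-\by{f}')\,ds
\]
and splitting the integrand as $[\nabla_{\by{f}}\phi(\by{f}_t(P))-\nabla_{\by{f}}\phi(\cdot)]+[\nabla_{\by{f}}\phi(\cdot)-\nabla_{\by{f}}\phi_M(\cdot)]$, the first bracket is made small by continuity of $\nabla_{\by{f}}\phi$ after shrinking $\rho$, and the second by the hypothesis $\|\phi_M-\phi\|_{1,\mathcal{V}_{\by{f}_t(P)}}\to0$, which controls the Jacobians uniformly. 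This forces the Lipschitz modulus of $\delta_M$ on $\mathbb{B}_\rho$ below $1/(2c)$, where $c$ is the Lipschitz constant of $\by{f}_t(\cdot)$, hence $T_M$ has modulus below $1/2$.

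For the self-mapping property I would use $\by{f}_t(0)=\by{f}_t(P)$: for $\by{f}\in\mathbb{B}_\rho$,
\[
\|T_M(\by{f})-\by{f}_t(P)\|=\|\by{f}_t(\delta_M(\by{f}))-\by{f}_t(0)\|\le c\|\delta_M(\by{f})\|\le c\big(\|\delta_M(\by{f}_t(P))\|+\tfrac{\rho}{2c}\big).
\]
Since $\delta_M(\by{f}_t(P))=\phi(\by{f}_t(P))-\phi_M(\by{f}_t(P))\to0$ w.p.1, for $M$ large $\|\delta_M(\by{f}_t(P))\|\le\rho/(2c)$, giving $\|T_M(\by{f})-\by{f}_t(P)\|\le\rho$ and simultaneously $\delta_M(\by{f})\in\mathcal{V}_0$. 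Banach's fixed-point theorem then yields, w.p.1 for $M$ large, a unique fixed point $\by{f}_t(P_M)\in\mathbb{B}_\rho$, which by the equivalence above is the unique solution of \eqref{eq:SVI-LAPUE-smooting-SAA} in $\mathbb{B}_\rho$. Convergence follows from the same estimate: $\|\by{f}_t(P_M)-\by{f}_t(P)\|\le c\|\delta_M(\by{f}_t(P_M))\|$, and decomposing $\delta_M(\by{f}_t(P_M))$ into the linearization error of $\phi$ (bounded by $\epsilon\|\by{f}_t(P_M)-\by{f}_t(P)\|$ with $c\epsilon<1$) plus the SAA error $\phi(\by{f}_t(P_M))-\phi_M(\by{f}_t(P_M))$, absorbing the first term on the left gives $\|\by{f}_t(P_M)-\by{f}_t(P)\|\le\frac{c}{1-c\epsilon}\|\phi_M-\phi\|_{1,\mathcal{V}_{\by{f}_t(P)}}\to0$ w.p.1.

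The main obstacle is the contraction estimate: it is exactly here that mere uniform convergence of the mapping values is insufficient, and one genuinely needs the $C^1$ convergence built into the $\|\cdot\|_{1,\mathcal{V}_{\by{f}_t(P)}}$ norm to control the Jacobian difference $\nabla_{\by{f}}\phi-\nabla_{\by{f}}\phi_M$ on the ball. A secondary technical point is the ``w.p.1 for $M$ large'' bookkeeping: the radius $\rho$ and the target modulus must be fixed first, using only the deterministic data ($c$ and the continuity of $\nabla_{\by{f}}\phi$), and only afterwards does one invoke the almost-sure $C^1$ convergence to pass to large $M$, so that the single exceptional null set on which convergence fails does not depend on the choice of $\rho$.
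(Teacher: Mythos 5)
Your proposal is correct, but note that the paper's entire ``proof'' of this theorem is a one-line appeal to Proposition~21 of \cite{Shapiro03}, a standard perturbation result for strongly regular generalized equations; what you have written is essentially a from-scratch reconstruction of the proof of that cited proposition. Your route---setting $\delta_M(\by{f}) = L(\by{f})-\hat{\phi}_t(\by{f},P_M)$, observing that SAA solutions near $\by{f}_t(P)$ are exactly the fixed points of $T_M(\by{f})=\by{f}_t(\delta_M(\by{f}))$, and running a Banach contraction argument---is precisely the Robinson-style implicit-function argument that underlies Shapiro's result, so the underlying mathematics coincides; the difference is presentational. What your self-contained version buys: it makes visible exactly where each hypothesis enters, in particular that sup-norm convergence of $\hat{\phi}_t(\cdot,P_M)$ alone would not suffice and the Jacobian part of the norm $\|\cdot\|_{1,\mathcal{V}_{\by{f}_t(P)}}$ is what drives the Lipschitz modulus of $\delta_M$ below $1/(2c)$; it also correctly handles the almost-sure bookkeeping (fix $\rho$ and the target modulus from deterministic data, then pass to large $M$ off a single null set) and the identification $\by{f}_t(0)=\by{f}_t(P)$, and it checks that $\delta_M(\by{f})\in\mathcal{V}_0$ on the ball so that the fixed-point equivalence is legitimate. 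What the paper's citation buys is brevity and delegation of all of these details to a reference whose hypotheses match the theorem's verbatim. If you keep your version, the only cosmetic improvement would be to state the fixed-point equivalence as a displayed lemma (SAA solution in $\mathbb{B}_\rho$ with $\delta_M(\by{f})\in\mathcal{V}_0$ if and only if $T_M(\by{f})=\by{f}$) before the contraction estimate, since the logical order matters: the equivalence is only available after the self-map estimate guarantees $\delta_M(\mathbb{B}_\rho)\subset\mathcal{V}_0$.
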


The theorem follows directly from Proposition~21 in  \cite{Shapiro03}.
 It 
requires strong regularity condition which is essentially about coherent orientation of the Jacobian matrix $\nabla_{\by{f}}\mathbb{E}_P[\hat{\by{u}}(\by{f},\by{C}(\by{f},\xi),t)]$,
we refer readers to Robinson \cite{Robinson92} and 
G\"{u}rkan et al \cite{GYR99}
for sufficient conditions.
In this context, the condition depends on the topological structure of the transportation network (i.e. the structure of $\Delta$ matrix) and the 
properties of the arc performance function $\by{T}(\by{v},\xi)$.

}

In the SAA problem \eqref{eq:SVI-LAPUE-smooting-SAA}, we assume that $\xi^1,\cdots,\xi^M$ are generated by the true probability distribution $P$ of $\xi$.
In practice, $P$ is usually unknown and the samples are  obtained via empirical data which contain noise for various reasons 
such measurement/recording errors. We call the former {\em real data} and the latter {\em perceived data} denoted by $\tilde{\xi}^1,\cdots,\tilde{\xi}^M$.
In other words, the  perceived data may be generated by another distribution of $Q$ which deviates from $P$. The shift from $P$ to 
$Q$ may also be interpreted as follows: $P$ is the nominal distribution of $\xi$ which we are familiar with based on experience 
whereas $Q$ is a mixture distribution of $P$ and $H$, i.e., 
$Q=(1-\epsilon)P+\epsilon H$, where $H$ captures random shocks such as unexpected road accidents 
and/or maintenance work which have significant effect on traffic flows on some arcs.  
Another way to interpret 
the data shift or distribution shift is that $P$ 
is the probability distribution of $\xi$ corresponding to the past or present data    
whereas $Q$ is the probability distribution 
for the future, which means that the traffic equilibrium in future might be different from 
the one obtained based on the past or present data. Our main objective in this paper is to derive qualitative and quantitative statistical analysis 
on the difference between the equilibrium based on  the sample data of $Q$ and the one based on the sample data of $P$. If the difference is small, then the equilibrium obtained from past or present available data might provide a viable forecast
of equilibrium in the future. In the case that 
the deviation is caused by 
measurement 
errors of data, we may interpret the equilibrium based on the samples of $Q$ as model equilibrium whereas the one based on the samples of $P$ as the true equilibrium. A small
difference between the two 
equilibria means that the true equilibrium 
is stable, resilient  or insensitive
against perturbation of the underlying random data. Figure~\ref{eq:steam-licrm}
summarizes this. 
\tikzstyle{startstop} = [rectangle, rounded corners, minimum width = 1.5cm, minimum height=1cm,text centered, draw = black]
\tikzstyle{io} = [trapezium, trapezium left angle=70, trapezium right angle=110, minimum width=2cm, minimum height=1cm, text centered, draw=black]
\tikzstyle{process} = [rectangle, minimum width=3cm, minimum height=1cm, text centered, draw=black]
\tikzstyle{decision} = [diamond, aspect = 3, text centered, draw=black]
\tikzstyle{arrow} = [->,>=stealth]
\begin{figure}[http]
\centering
\begin{tikzpicture}[node distance=2cm]
\node[startstop](origi) at (4.5,2) 
{ Data perturbation};
\node[process](lcrm1) at (0,0)
{Distribution shift};
\node[process](lcrm2) at (4.5,0)
{Data contamination};
\node[process](lcrm3) at (9,0)
{Random shocks};
\node[startstop](llcrm1) at (0,-2)
{$\mathcal{U}(P,\varepsilon):=\{Q|d(P,Q)\leq\varepsilon\}$};
\node[startstop](llcrm2) at (4.5,-2)
{$Q=(1-\varepsilon)P+\varepsilon \delta_{\tilde{\xi}}$};
\node[startstop](llcrm3) at (9,-2)
{$Q=(1-\varepsilon)P+\varepsilon H$};
\node[startstop](origi0) at (4.5,-4) 
{$\mathcal{P}(P,\varepsilon) := \{(1-\varepsilon)P+\varepsilon H|H\in\mathscr{P}(\R^k)\}$};
\draw[->,thick] (origi) to [in = 60, out = -120] (lcrm1);
\draw[arrow,thick] (origi) to  (lcrm2);
\draw[->,thick] (origi) to [in = 110, out = 300] (lcrm3);
\draw[arrow,thick] (lcrm1) -- node [right]  {heterogeneities}  (llcrm1);
\draw[arrow,thick] (lcrm2) -- node [right]  {outliers}  (llcrm2);
\draw[arrow,thick] (lcrm3) -- node [right]  {accidents}  (llcrm3);
\draw[->,thick] (llcrm1) to [in = 90, out = 300] (origi0);
\draw[arrow,thick] (llcrm2) to  (origi0);
\draw[->,thick] (llcrm3) to [in = 90, out = 250] (origi0);
\end{tikzpicture}
\caption{Data perturbations 
}
\label{eq:steam-licrm}
\end{figure}
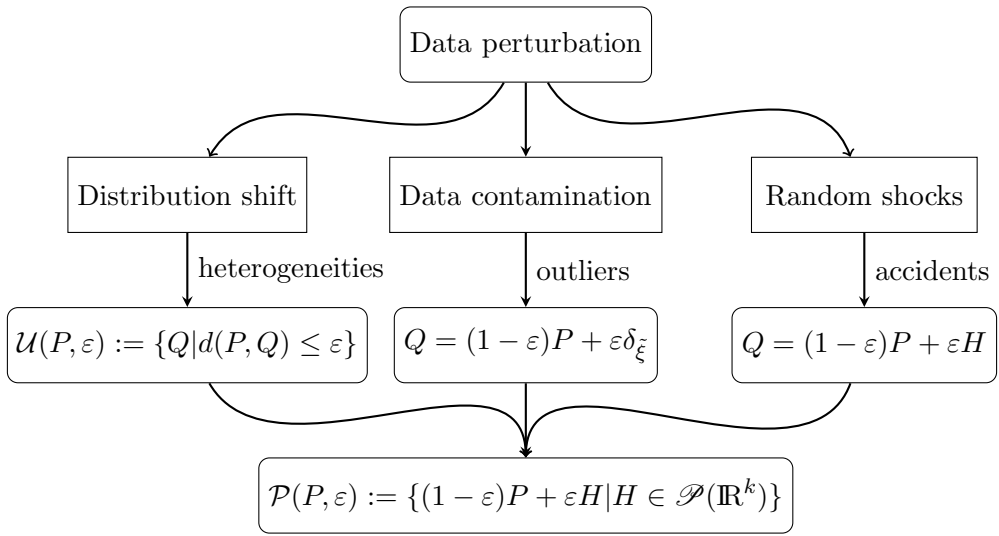

\section{Statistical robustness of MLAPUE}
\label{sec3:SR-MLAPUE}
We proceed our analysis of impact of data perturbation on MLAPUE in two cases:  (a) a few data are perturbed such as outliers and (b) all data are potentially perturbed. To this end,  
we 
write the perturbed MSVIP \eqref{eq:SVI-LAPUE} as:
\bgeqn
\label{eq:GE-purt}
\inmat{(MSVIP-ptb)}\quad 0\in \mathbb{E}_Q[\hat{{\bm u}}({\bm f},\by{C}(\by{f},\xi),t)]+{\cal N}_{D}({\bm f}),
\edeqn
 where $Q\in\mathscr{P}(\R^k)$ is the underlying probability distribution 
 of the perceived data and $\mathscr{P}(\R^k)$ denotes the set of all probability measures over $\R^k$.
In case (a), we may represent $Q$ as a mixture distribution 
 $Q=(1-\varepsilon)P+\varepsilon H$
 where $P$ is the true probability distribution, $H$ is the distribution generating the perturbed data and $\varepsilon$ is a small positive number. 
  In this case,  the perturbed data are generated by $H$ with probability $\varepsilon$.

\subsection{Single data perturbation}
\label{sec3.1:singledata}

We begin with single data perturbation, that is, 
$Q = (1-\varepsilon)P+\varepsilon\delta_{\tilde{\xi}}$, where  
$\delta_{\tilde{\xi}}$ denotes the Dirac measure at $\tilde{\xi}$ and $\tilde{\xi}$ is an outlier which is located outside the support set of $\xi$ under distribution $P$.
In this case, 
MSVIP-ptb (\ref{eq:GE-purt}) can be written as
\bgeqn
\label{eq:GE-purt-single}
0\in \mathbb{E}_{(1-\varepsilon)P+ \varepsilon\delta_{\tilde{\xi}}}[\hat{{\bm u}}({\bm f},\by{C}(\by{f},\xi),t)]+{\cal N}_{D}({\bm f}).
\edeqn
Under the conditions of Proposition \ref{coy:exist-solu-MSVIP}  (a) and (b), we can show
(\ref{eq:GE-purt-single}) has a solution provided that
$\mathbb{E}_{(1-\varepsilon)P+ \varepsilon\delta_{\tilde{\xi}}}[\hat{{\bm u}}({\bm f},\by{C}(\by{f},\xi),t)]$ is bounded. 
As we discussed earlier, (\ref{eq:GE-purt-single}) has
multiple solutions because it concerns path flow equilibrium.
Let $\by{\mathcal{F}}_t((1-\varepsilon)P+ \varepsilon\delta_{\tilde{\xi}})$ denote the set of  solutions to problem \eqref{eq:GE-purt-single}. We investigate
the impact of the perturbation of the probability distribution on the set of solutions by so called
{\em generalized influence function} of $\by{\mathcal{F}}_t(\cdot)$.
Let 
\bgeq
\Psi_{\tilde{\xi},P,t}(\by{f},\varepsilon):=
\mathbb{E}_{(1-\varepsilon)P+ \varepsilon\delta_{\tilde{\xi}}}[\hat{{\bm u}}({\bm f},\by{C}(\by{f},\xi),t)]=
\hat{\phi}_t(\by{f},P)+\varepsilon\left(\hat{\by{u}}(\by{f},\by{C}(\by{f},\tilde{\xi}),t)-\hat{\phi}_t(\by{f},P)\right) 
\edeq
where $\hat{\phi}_t(\by{f},P)=\mathbb{E}_P[\hat{\by{u}}(\by{f},\by{C}(\by{f},\xi),t)]$, and 
\bgeqn
\by{\tilde{\mathcal{F}}}_{\tilde{\xi},P,t}(\varepsilon):=
\by{\mathcal{F}}_t\left((1-\varepsilon)P+ \varepsilon\delta_{\tilde{\xi}}\right)=
\left\{\by{f}\in\R^N\left|0\in\Psi_{\tilde{\xi},P,t}(\by{f},\varepsilon)+\mathcal{N}_D(\by{f})\right.\right\}.
\edeqn
In line with influence function approach in robust statistics \cite{Huber04}, we want to investigate the derivative of 
$\by{\tilde{\mathcal{F}}}_{\tilde{\xi},P,t}(\varepsilon)$ in $\varepsilon$ at $0$, or in other words, the directional derivative of   
$\by{\mathcal{F}}_t\left((1-\varepsilon)P+ \varepsilon\delta_{\tilde{\xi}}\right) =
\by{\mathcal{F}}_t\left(P+ \varepsilon(\delta_{\tilde{\xi}}-P)\right)
$
at $P$ along the ``direction''  $\delta_{\tilde{\xi}}-P$. 
Unfortunately $\by{\tilde{\mathcal{F}}}_{\tilde{\xi},P,t}(\varepsilon)$ is a set-valued mapping. This requires us to
exploit the notion of so-called proto-derivative,  see Definition \ref{def:proto-d} in the appendix. 
Observe that 
$\by{\tilde{\mathcal{F}}}_{\tilde{\xi},P,t}(0)=\by{\mathcal{F}}_t(P)$ and the proto-derivative 
of $\by{\tilde{\mathcal{F}}}_{\tilde{\xi},P,t}(\varepsilon)$ at $0$ for $\by{f}_t(P)\in\by{\tilde{\mathcal{F}}}_{\tilde{\xi},P,t}(0)$ can be written as 
\bgeqn
\by{\tilde{\mathcal{F}}}'_{\tilde{\xi},P,t;0,\by{f}_t(P)}(1)=\lim_{\epsilon\downarrow 0}\sup_{s\to 1}\frac{\by{\tilde{\mathcal{F}}}_{\tilde{\xi},P,t}(\epsilon s)-\by{f}_t(P)}{\epsilon}=\lim_{\epsilon\downarrow 0}\sup_{s\to 1}\frac{\by{\mathcal{F}}_t((1-\epsilon s)P+ \epsilon s\delta_{\tilde{\xi}})-\by{f}_t(P)}{\epsilon}.
\edeqn
Following \cite[Definition 2]{GuoX23}, 
we call $\by{\tilde{\mathcal{F}}}'_{\tilde{\xi},P,t;0,\by{f}_t(P)}(1)$ 
{\em generalized influence function} and denote it by $
\inmat{GIF}(\tilde{\xi};\by{\mathcal{F}}_t(\cdot),P,\by{f}_t(P))$.

The generalized influence function evaluates the influence of an infinitesimal amount of perturbation on the distribution $P$, which 
extends the classical definition of influence function introduced by   
Hampel \cite{Hampel68}.
In general, we are unable to obtain a closed form of $\by{\mathcal{F}}_t(\cdot)$. 
However, we may 
derive an upper bound for
$\inmat{GIF}(\tilde{\xi},\by{\mathcal{F}}_t(P),P)$ by employing the implicit function theorem.

Since $\hat{\by{u}}(\by{f},\by{C}(\by{f},\xi),t)$ is continuously differentiable in $\by{f}$ for almost every $\xi\in\Xi$,
by exploiting 
 proto-derivative of the normal cone
$\mathcal{N}_D(\by{f})$, 
we can represent $\inmat{GIF}(\tilde{\xi};\by{\mathcal{F}}_t(\cdot),P,\by{f}_t(P))$  as a set of solutions of the following equation:
$$
-\hat{\by{u}}(\by{f}_t^*,\by{C}(\by{f}_t^*,\tilde{\xi}),t)+\hat{\phi}_t(\by{f}_t^*,P)-\mathbb{E}[\nabla_{\by{f}}\hat{\by{u}}(\by{f}_t^*,\by{C}(\by{f}_t^*,\tilde{\xi}),t)]\by{f}\in(\mathcal{N}_D)'_{\by{f}_t^*,\hat{\by{u}}^*_t}(\by{f}),
$$
where $\by{f}_t^*=\by{f}_t(P)$ and 
     $\hat{\by{u}}^*_t=-\hat{\phi}_t(\by{f}_t^*,P)$.
Proposition \ref{prop:IF} in the appendix states the details about this. 
Define 
\bgeq
\left\|\inmat{GIF}(\tilde{\xi};\by{\mathcal{F}}_t(\cdot),P,\by{f}_t^*)\right\|:=\min\left\{\nu:\|\eta\|\leq\nu, \forall\eta\in\inmat{GIF}(\tilde{\xi};\by{\mathcal{F}}_t(\cdot),P,\by{f}_t^*)\right\}.
\edeq
The next theorem states a sufficient condition for the boundedness of $\inmat{GIF}(\tilde{\xi};\by{\mathcal{F}}_t(\cdot),P,\by{f}_t^*)$.

\begin{theorem}
\label{thm:IF}
Assume that the generated equations
 \bgeq
 0\in\mathbb{E}_P\left[\nabla_{\by{f}}\hat{\by{u}}(\by{f}_t^*,\by{C}
    (\by{f}_t^*,\xi),t)\right]\by{f}+(\mathcal{N}_D)'_{\by{f}_t^*,\hat{\by{u}}^*}(\by{f})
 \edeq
 has a unique solution $0$. Then the following assertions hold.
 \begin{itemize}
     \item [\inmat{(i)}] $\inmat{GIF}(\tilde{\xi};\by{\mathcal{F}}_t(\cdot),P,\by{f}_t^*)$ is bounded for every $\tilde{\xi}\in\R^k$.
     \item [{\rm (ii)}] If, in addition, $\Upsilon_t(\by{f})=\hat{\phi}_t(\by{f},P)+\mathcal{N}_D(\by{f})$ is strongly metrically regular at $\by{f}_t^*$ for $0$ with regular modulus $\gamma$, then 
 \bgeqn
 \label{GIF:bounded}
 \sup_{\tilde{\xi}\in\R^k}\left\|\inmat{GIF}(\tilde{\xi};\by{\mathcal{F}}_t(\cdot),P,\by{f}_t^*)\right\|
 \leq
\gamma\sup_{\by{f}\in\mathbb{F}_t,\tilde{\xi}\in\R^k}
 \left\|\hat{\phi}_t(\by{f},P)-\hat{\by{u}}(\by{f},\by{C}
    (\by{f},\tilde{\xi}),t)\right\|,
 \edeqn
 where $\mathbb{F}_t$ is defined as in Assumption \ref{asm:compact-MSVIP}.
 \end{itemize}
\end{theorem}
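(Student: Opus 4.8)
The plan is to build everything on the representation of the generalized influence function already isolated in Proposition~\ref{prop:IF}: for the fixed pair $\by{f}_t^\ast=\by{f}_t(P)$ and $\hat{\by{u}}^\ast=-\hat{\phi}_t(\by{f}_t^\ast,P)$, the set $\inmat{GIF}(\tilde{\xi};\by{\mathcal{F}}_t(\cdot),P,\by{f}_t^\ast)$ coincides with the solution set of the linear generalized equation
\[
0\in w(\tilde{\xi})+A\,\by{f}+(\mathcal{N}_D)'_{\by{f}_t^\ast,\hat{\by{u}}^\ast}(\by{f}),
\]
where $A:=\mathbb{E}_P[\nabla_{\by{f}}\hat{\by{u}}(\by{f}_t^\ast,\by{C}(\by{f}_t^\ast,\xi),t)]$ is the constant Jacobian, $w(\tilde{\xi}):=\hat{\by{u}}(\by{f}_t^\ast,\by{C}(\by{f}_t^\ast,\tilde{\xi}),t)-\hat{\phi}_t(\by{f}_t^\ast,P)$ is the perturbation vector, and $(\mathcal{N}_D)'_{\by{f}_t^\ast,\hat{\by{u}}^\ast}$ is the proto-derivative of the normal-cone map. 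The standing hypothesis is exactly that the homogeneous version (with $w=0$) of this inclusion has $\by{f}=0$ as its only solution. I would treat (i) and (ii) separately: the first purely qualitatively, from the conical geometry of the inclusion, and the second quantitatively, by feeding strong metric regularity into the $\varepsilon$-perturbed problem.

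For (i), I would first record that $D=\{\by{f}\in\R_+^N:\Pi\by{f}=\by{q}\}$ is polyhedral, so $\mathcal{N}_D$ is proto-differentiable and $(\mathcal{N}_D)'_{\by{f}_t^\ast,\hat{\by{u}}^\ast}$ equals the normal-cone map of the (polyhedral) critical cone $K(\by{f}_t^\ast,\hat{\by{u}}^\ast)$. Hence the map $G(\by{f}):=A\by{f}+(\mathcal{N}_D)'_{\by{f}_t^\ast,\hat{\by{u}}^\ast}(\by{f})$ has a closed graph, and since the graph of the normal-cone map of a cone is itself a cone and $A$ is linear, $\gph G$ is a closed cone, equivalently $G(\lambda\by{f})=\lambda G(\by{f})$ for all $\lambda>0$. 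I would then argue by contradiction: if $G^{-1}(-w(\tilde{\xi}))$ were unbounded, pick $\by{f}_n\in G^{-1}(-w(\tilde{\xi}))$ with $\|\by{f}_n\|\to\infty$; by conicity $(\by{f}_n/\|\by{f}_n\|,-w(\tilde{\xi})/\|\by{f}_n\|)\in\gph G$, and along a convergent subsequence $\by{f}_n/\|\by{f}_n\|\to\bar{\by{f}}$ with $\|\bar{\by{f}}\|=1$ while $-w(\tilde{\xi})/\|\by{f}_n\|\to0$. Closedness of $\gph G$ then forces $0\in G(\bar{\by{f}})$ with $\bar{\by{f}}\neq0$, contradicting $G^{-1}(0)=\{0\}$. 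Thus $\inmat{GIF}(\tilde{\xi};\cdot)=G^{-1}(-w(\tilde{\xi}))$ is bounded for every $\tilde{\xi}\in\R^k$.

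For (ii), I would return to the $\varepsilon$-parametrised single-data problem \eqref{eq:GE-purt-single}, namely $0\in\hat{\phi}_t(\by{f},P)+\varepsilon\,w(\tilde{\xi},\by{f})+\mathcal{N}_D(\by{f})$ with $w(\tilde{\xi},\by{f}):=\hat{\by{u}}(\by{f},\by{C}(\by{f},\tilde{\xi}),t)-\hat{\phi}_t(\by{f},P)$. If $\by{f}_t(\varepsilon)$ denotes the local solution, moving the perturbation to the right gives $\by{f}_t(\varepsilon)=\Upsilon_t^{-1}\!\big(-\varepsilon\,w(\tilde{\xi},\by{f}_t(\varepsilon))\big)$. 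Strong metric regularity of $\Upsilon_t$ at $\by{f}_t^\ast$ for $0$ makes $\Upsilon_t^{-1}$ single-valued and $\gamma$-Lipschitz near $0$; since $w(\tilde{\xi},\cdot)$ is bounded on $\mathbb{F}_t$ (Proposition~\ref{prop:property-of-u}(ii)) the argument $-\varepsilon w$ lies in that neighbourhood and $\by{f}_t(\varepsilon)$ stays in $\mathbb{F}_t$ for all small $\varepsilon>0$ (Assumption~\ref{asm:compact-MSVIP}). Therefore
\[
\|\by{f}_t(\varepsilon)-\by{f}_t^\ast\|=\big\|\Upsilon_t^{-1}(-\varepsilon w(\tilde{\xi},\by{f}_t(\varepsilon)))-\Upsilon_t^{-1}(0)\big\|\le\gamma\,\varepsilon\,\|w(\tilde{\xi},\by{f}_t(\varepsilon))\|\le\gamma\,\varepsilon\sup_{\by{f}\in\mathbb{F}_t}\|w(\tilde{\xi},\by{f})\|.
\]
Dividing by $\varepsilon$, using that under strong metric regularity $\inmat{GIF}(\tilde{\xi};\cdot)$ reduces to the single-valued limit $\lim_{\varepsilon\downarrow0}(\by{f}_t(\varepsilon)-\by{f}_t^\ast)/\varepsilon$, and finally taking the supremum over $\tilde{\xi}\in\R^k$ would yield \eqref{GIF:bounded}.

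The hard part will not be the contradiction argument but the two structural facts it silently relies on: that $(\mathcal{N}_D)'_{\by{f}_t^\ast,\hat{\by{u}}^\ast}$ genuinely has a closed conical graph (requiring the proto-differentiability of polyhedral normal-cone maps together with the critical-cone formula), and, for (ii), that the one-sided limit defining $\inmat{GIF}$ exists and equals the Lipschitz difference quotient. I would make the latter rigorous by invoking the implicit-function-theorem consequence of strong metric regularity, exactly as in Theorem~\ref{thm:smth-approx}(ii) and Theorem~\ref{prop:SAA-uniquesolu}, which delivers both local single-valuedness of $\by{f}_t(\cdot)$ and directional differentiability, so that passing to the limit in the displayed inequality is legitimate and preserves the bound.
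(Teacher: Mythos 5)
Your proposal is correct, and its two halves relate to the paper's proof differently. For part (ii) you essentially reproduce the paper's own argument: the paper likewise combines strong metric regularity with the observation that for $Q=(1-\varepsilon)P+\varepsilon\delta_{\tilde{\xi}}$ the perturbation $\hat{\phi}_t(\by{f},P)-\hat{\phi}_t(\by{f},Q)$ scales linearly in $\varepsilon$, obtains $\|\by{f}_t(Q)-\by{f}_t^*\|\leq\gamma\varepsilon\sup_{\by{f}\in\mathbb{F}_t}\|\hat{\phi}_t(\by{f},P)-\hat{\by{u}}(\by{f},\by{C}(\by{f},\tilde{\xi}),t)\|$ for all solutions in a neighborhood of $\by{f}_t^*$, divides by $\varepsilon$ and passes to the outer limit. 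One small repair: you should not lean on ``directional differentiability'' of $\by{f}_t(\cdot)$, which strong metric regularity alone does not give; it is also unnecessary, since $\inmat{GIF}$ is by definition an outer limit of difference quotients, and your displayed quotient bound already forces every cluster point into the ball of radius $\gamma\sup_{\by{f}\in\mathbb{F}_t}\|w(\tilde{\xi},\by{f})\|$, which is all that \eqref{GIF:bounded} requires. For part (i), by contrast, you take a genuinely different route. The paper reads the uniqueness hypothesis as saying that the proto-derivative of the solution mapping $\tilde{\by{\mathcal{F}}}_{\tilde{\xi},P,t}$ at $0$ for $\by{f}_t^*$, evaluated at the direction $0$, equals $\{0\}$, and then invokes Theorem 4.1 of \cite{Rockafellar89} to get isolated calmness, $\tilde{\by{\mathcal{F}}}_{\tilde{\xi},P,t}(\varepsilon)\cap(\by{f}_t^*+\alpha_0\mathcal{B})\subset\by{f}_t^*+\rho\varepsilon\mathcal{B}$, so that the GIF lies in $\rho\mathcal{B}$. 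You instead use the representation of Proposition \ref{prop:IF}, $\inmat{GIF}=G^{-1}(-w(\tilde{\xi}))$ with $G(\by{f})=A\by{f}+(\mathcal{N}_D)'_{\by{f}_t^*,\hat{\by{u}}^*}(\by{f})$, and a homogeneity-plus-compactness contradiction: since $\gph(\mathcal{N}_D)'_{\by{f}_t^*,\hat{\by{u}}^*}$ is the tangent cone to $\gph\mathcal{N}_D$ at $(\by{f}_t^*,\hat{\by{u}}^*)$ (a closed cone, with the critical-cone formula available because $D$ is polyhedral), $\gph G$ is a closed cone, and any unbounded solution sequence would, after normalization, produce a nonzero solution of $0\in G(\bar{\by{f}})$, contradicting $G^{-1}(0)=\{0\}$. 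Both arguments are valid. Yours is more elementary and self-contained, needing only Proposition \ref{prop:IF} and polyhedrality of $D$ rather than the external calmness theorem; the paper's argument does not route through the explicit formula for the GIF and delivers the quantitative local rate $\rho$ (an upper Lipschitz constant for the localized solution map) as a by-product, which your purely qualitative compactness argument does not.
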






To see how the generalized influence function may be calculated, we 
use a simple example to illustrate.

\begin{example}
The \inmat{MSVIP} \eqref{eq:SVI-LAPUE-smooting} can be reformulated as a complementarity problem:
\bgeq
\by{0}\leq\left(
\begin{matrix}
\by{f}\\
\tilde{\by{z}}
\end{matrix}\right)\bot
\left(
\begin{matrix}
\hat{\phi}_t(\by{f},P)-\Pi^\top\tilde{\by{z}}\\
\Pi\by{f}-\by{q}
\end{matrix}\right)\geq\by{0},
\edeq
where $\tilde{\by{z}}\in\R^{W}$ is a vector of minimum OD disutilities corresponding to $\by{f}$, $\hat{\phi}(\by{f},P)$ is defined as in \eqref{eq:hat-phi} and $\Pi$ is the OD-path incidence matrix.
Let $\by{f}_t^*\in\by{\mathcal{F}}_t(P)$ be a solution to \inmat{MSVIP} problem \eqref{eq:SVI-LAPUE-smooting}, 
$\hat{\by{u}}^*_t=-\mathbb{E}_P[\hat{\by{u}}(\by{f}^*_t,\by{C}(\by{f}^*_t,\xi),t)]$ and $\tilde{z}=\min_{1\leq r\leq N}\{-\hat{u}^*_{t,r}\}$, where $\hat{u}^*_{t,r}$ represents the $r$th component of vector $\hat{\by{u}}_t^*$. Let 
\bgeq
\mathcal{I}_{+}:=\left\{r\in\{1,\cdots,N\}|f_{t,r}^*>0, -\hat{u}_{t,r}^*-[\Pi^\top]_r\tilde{z}=0\right\},\\
\mathcal{I}_{0}:=\left\{r\in\{1,\cdots,N\}|f_{t,r}^*=0, -\hat{u}_{t,r}^*-[\Pi^\top]_r\tilde{z}=0\right\},\\
\mathcal{I}_{-}:=\left\{r\in\{1,\cdots,N\}|f_{t,r}^*=0, -\hat{u}_{t,r}^*-[\Pi^\top]_r\tilde{z}<0\right\},
\edeq
where $f_{t,r}^*$ denotes the $r$th component of vector $\by{f}_t^*$ and $[\Pi^\top]_r$ denotes the  $r$th row vector of matrix $\Pi^\top$.
Since
\bgeqn
 \inmat{gph}(\mathcal{N}_D)'_{\by{f}_t^*,\hat{\by{u}}_t^*}
&=&\inmat{gph} (\mathcal{N}_{\R^N_+})'_{\by{f}_t^*,\hat{\by{u}}_t^*+\Pi^\top \tilde{z}}=\inmat{gph}D\mathcal{N}_{\R^N_+}(\by{f}_t^*|\hat{\by{u}}_t^*+\Pi^\top \tilde{z})\nonumber\\
&=&\mathcal{T}_{\inmat{gph}\mathcal{N}_{\R_+^N}}(\by{f}_t^*,\hat{\by{u}}_t^*+\Pi^\top \tilde{z})\nonumber\\
 &=&\left\{(\by{f}_t,\hat{\by{u}}_t)\in\R^N\times\R^N\left|\begin{array}{l}
\hat{u}_{t,r}+\Pi^\top\tilde{z}=0(r\in\mathcal{I}_+),{f}_{t,r}=0 (r\in\mathcal{I}_{-}) \\
{f}_{t,r}=0, \hat{u}_{t,r}+\Pi^\top\tilde{z}=0 \;\inmat{or}\;\\
{f}_{t,r}=0, \hat{u}_{t,r}+\Pi^\top\tilde{z}\leq0(r\in\mathcal{I}_{0})
 \end{array}
 \right.
 \right\}\nonumber,
 \edeqn
 where $\inmat{gph}(\mathcal{N}_D)'_{\by{f}_t^*,\hat{\by{u}}_t^*}$ denotes the graph of proto-derivative $(\mathcal{N}_D)'_{\by{f}_t^*,\hat{\by{u}}_t^*}$, $\inmat{gph}D\mathcal{N}_{\R^N_+}(\by{a}|\by{b})$ denotes the graph of graphical derivative $D\mathcal{N}_{\R^N_+}(\by{a}|\by{b})$ and $\mathcal{T}_{\inmat{gph}\mathcal{N}_{\R_+^N}}(\by{a},\by{b})$ denotes the tangent cone of $\inmat{gph}\mathcal{N}_{\R_+^N}$ at point $(\by{a},\by{b})$ (see Definition \ref{def:proto-d} in the appendix). 
 Let 
 $$
 \by{g}(\by{f},t) :=-\hat{\by{u}}(\by{f}_t^*,\by{C}(\by{f}_t^*,\tilde{\xi}),t)-\hat{\phi}_t(\by{f}_t^*,P)-\mathbb{E}_P[\nabla_{\by{f}} \hat{\by{u}}(\by{f}_t^*,\by{C}(\by{f}_t^*,\xi),t)]\by{f}+\Pi^\top\tilde{z}.
    $$
    By Proposition \ref{prop:IF} in the appendix,
    \bgeq
   \inmat{GIF}(\tilde{\xi};\by{\mathcal{F}}_t(\cdot),P,\by{f}_t^*)=
    \left\{\by{f}\in\R^N\left|\begin{array}{l}
g_r(\by{f},t)=0  \;(\inmat{for}\;r\in\mathcal{I}_+),{f}_{r}=0 \;(\inmat{for}\;r\in\mathcal{I}_{-}) \\
{f}_{r}=0, g_r(\by{f},t)=0 \;\inmat{or}\;
{f}_{r}=0, g_r(\by{f},t)\leq0\;(\inmat{for}\;r\in\mathcal{I}_{0})
 \end{array}
 \right.
 \right\}.
    \edeq
The rhs is a complementarty problem which can be easily solved by an existing code \cite{FerrisP97}. 
 \end{example}



\subsection{Breakdown point analysis}
\label{sec3.1:BP}
 We now turn to discuss the case that 
 the data set has multiple outliers instead of a single outlier
 and
 how the number of outliers may affect the 
quality of MLAPUE by using 
 the notion of breakdown point
introduced   by Hampel \cite{Hampel68}.
 Let $P_M$ denote the empirical distribution 
 based on 
 unperturbed samples
 and  
 $P_{M,m}$ is the one where
 $m$ of the samples 
 are outliers. 
 Let $\hat{\by{f}}_t(P_M)\in\hat{\by{\mathcal{F}}_{t}}(P_M)$ and $\hat{\by{f}_{t}}({P}_{M,m})\in\hat{\by{\mathcal{F}}_{t}}({P}_{M,m})$ denote 
  the solution obtained from solving SAA-MSVIP problem \eqref{eq:SVI-LAPUE-smooting-SAA} with $P_M$ and $P_{M,m}$. 
Following \cite{Huber04}, we define 
the  finite-sample breakdown point for estimator $\hat{\by{f}}_{t}$ 
as
\bgeqn
\epsilon^*(\hat{\by{f}}_t,P_M):=\max_m\left\{\frac{m}{M}\left|\sup_{\tilde{P}_{M,m}}\|\hat{\by{f}}_{t}(P_M)-\hat{\by{f}}_{t}(\tilde{P}_{M,m})\|<\infty\right.\right\},
\edeqn
where the supremum is taken because 
outliers are not identifiable and the maximum 
is taken w.r.t. the number of outliers.
{\color{black} $\epsilon^*(\hat{\by{f}}_t,P_M)$ represents the maximum proportion of outliers within the entire dataset such that the deviation between the estimator based on the mixed data (containing both outliers and unperturbed data) and the estimator based solely on unperturbed data remains bounded.}


In the case when the data are generated by a mixture distribution $(1-\varepsilon)P+\varepsilon H$ where
$P$ is the nominal distribution generating good data 
and $H$ is
the distribution generating outliers, 
we may define the breakdown point as
\bgeqn
\epsilon^*(\hat{\by{f}}_t,P):=\min_{\varepsilon\in[0,1]}\left\{\varepsilon:\sup_{P'\in\mathcal{P}(P,\varepsilon)}\|\hat{\by{f}}_t(P)-\hat{\by{f}}_t(P')\|=\infty\right\},
\edeqn
where
\bgeqn
\mathcal{P}(P,\varepsilon) := \{(1-\varepsilon)P+\varepsilon H|H\in\mathscr{H}\subset\mathscr{P}(\R^k)\}
\edeqn
for $\varepsilon\in[0,1]$ and $\mathscr{H}$ is a subset of distributions generating 
the outliers. In this formulation,
the information about the true probability distribution $H$ generating outliers is incomplete. Consequently, we consider the worst-case distribution from  $\mathscr{H}$.

\subsection{All data perturbation}
\label{sec:alldata}



The influence function and breakdown point approaches 
are concerned with the cases that 
perceived sample data can be  divided into two categories: 
good data which are not perturbed 
and bad data which are outliers. 
In some cases, all data 
could be potentially perturbed 
but they are not necessarily outliers. 
Let
$\tilde{\xi}^1,\cdots,\tilde{\xi}^M$ denote
the perceived data and 
$Q_M:=\frac{1}{M}\sum_{i=1}^M\delta_{\tilde{\xi}^i}$. 
Assume that $\tilde{\xi}^1,\cdots,\tilde{\xi}^M$ are i.i.d copies of $\tilde{\xi}$ with distribution $Q$, 
where $Q$ be regarded as  a perturbation of $P$. 
Differing from (\ref{eq:SVI-LAPUE-smooting-SAA}),
we consider sample average approximation of the MSVIP based 
perceived data:
\bgeqn
\label{eq:GE-purt-saa}
0\in \frac{1}{M}\sum_{i=1}^M\hat{{\bm u}}({\bm f},\by{C}(\by{f},\tilde{\xi}^i),t)]+{\cal N}_{D}({\bm f}),
\edeqn
which 
may be viewed as the sample average approximation of MSVIP-ptb problem \eqref{eq:GE-purt}.
Let $\by{\mathcal{F}}_t(Q_M)$ 
denote the set of solutions to problem \eqref{eq:GE-purt-saa} and $\by{f}_t(Q_M)\in \by{\mathcal{F}}_t(Q_M)$. 
We use 
$\by{f}_t(Q_M)$ as an estimator of a solution to MSVIP (\ref{eq:SVI-LAPUE-smooting}).
Unfortunately $\by{f}_t(Q_M)$ converges 
to $\by{f}_t(Q)$ 
instead of $\by{f}_t(P)$ (a solution to MSVIP (\ref{eq:SVI-LAPUE-smooting})) whereas 
$\by{f}_t(P_M)$ converges to $\by{f}_t(P)$ 
as $M\to \infty$ under some moderate 
conditions.
In what follows, we derive conditions under which 
the distributions of $\by{f}_t(Q_M)$ and 
$\by{f}_t(P_M)$ are close under some metric.
By then, we will be able to use
$\by{f}_t(Q_M)$ as an estimator of $\by{f}_t(P)$. 
To this end, we need 
additional conditions on $\by{T}(\by{v},\xi)$
and $\hat{\by{u}}(\by{f},\by{C}(\by{f},\xi),t)$.

\begin{assumption}
\label{asm:ATT}
For the actual arc travel time function $\by{T}(\by{v},\xi)$,  we assume that
there exists a constant $ L >$ such that 
    \bgeqn
    \|\by{T}(\by{v},\xi)-\by{T}(\by{v},\xi')\|\leq L\|\xi-\xi'\| \;\inmat{and}\;
    \|\nabla_{\by{v}}\by{T}(\by{v},\xi)-\nabla_{\by{v}}\by{T}(\by{v},\xi')\|\leq  L \|\xi-\xi'\|
   \edeqn
 for all $\by{v}\in\mathbb{V}_t$, $\xi,\xi'\in\Xi$,  where $\mathbb{V}_t$ is defined as in Assumption \ref{asm:compact-MSVIP} and $\Xi\subset \R^k$.
\end{assumption}
    To justify Assumption \ref{asm:ATT} as well as 
    Assumption \ref{asm:T-value} in Section \ref{sec2:MLPAUE}, we consider the case that each component of $\by{T}(\by{v},\xi)$ is the common BPR function (see  \eqref{func:GBPR}). 
        It is obvious that $\by{T}(\by{v},\xi)$ is continuously differentiable in $\by{v}$ for any fixed $t_a^0,b_a,n_a>0$. 
        Moreover, it is not hard to prove that $\by{T}(\by{v},\xi)$ satisfies Assumption \ref{asm:T-value} (b) and (c) if $C_a(\xi)$ is 
        continuous in $\xi$. This verifies Assumption \ref{asm:T-value}.
    To see how 
    Assumption \ref{asm:ATT} may be fulfilled, we note that
\bgeqn
\label{func:GBPR-1}
\by{T}(\by{v},\xi)=\{T_a(\by{v},\xi)\}_{a=1}^A=\left\{t_a^0\left(1+b_a\left(\frac{v_a}{C_a(\xi)}\right)^{n_a}\right)\right\}_{a=1}^A,
\edeqn
we can obtain that 
\bgeq
\nabla_{\by{v}}\by{T}(\by{v},\xi)=\left\{\frac{t_a^0n_a}{C_a(\xi)}\left(1+b_a\left(\frac{v_a}{C_a(\xi)}\right)^{n_a-1}\right)\cdot e_a\right\}_{a=1}^A,
\edeq
where $e_a$ denotes the unit vector whose $a$th component is 1 and all the other components are 0. Thus 
\bgeq
&&\|\nabla_{\by{v}}\by{T}(\by{v},\xi)-\nabla_{\by{v}}\by{T}(\by{v},\xi')\|\\
&&=\max_{a\in\{1,\cdots,A\}}\left|\frac{t_a^0n_a}{C_a(\xi)}\left(1+b_a\left(\frac{v_a}{C_a(\xi)}\right)^{n_a-1}\right)-\frac{t_a^0n_a}{C_a(\xi')}\left(1+b_a\left(\frac{v_a}{C_a(\xi')}\right)^{n_a-1}\right)\right|\\
&&=\left|\frac{t_{a^*}^0n_{a^*}}{C_{a^*}(\xi)}-\frac{t_{a^*}^0n_{a^*}}{C_{a^*}(\xi')}+\left(\frac{t_{a^*}^0n_{a^*}b_{a^*}v^{n_{a^*}-1}_{a^*}}{C^{n_{a^*}}_{a^*}(\xi)}-\frac{t_{a^*}^0n_{a^*}b_{a^*}v^{n_{a^*}-1}_{a^*}}{C^{n_{a^*}}_{a^*}(\xi')}\right)\right|.
\edeq
The discussions above show that 
$\by{T}(\by{v},\xi)$ satisfy Assumptions \ref{asm:ATT}~(a) and (b) when 
$C_a(\xi)$ is Lipschitz continuous w.r.t.~$\xi$, 
$|C^{n_{a^*}}_{a^*}(\xi)|$ is lower bounded by a positive constant 
and $\Xi$ is a compact set. 

\begin{proposition}
\label{prop:u-LipC}
For the disutility function $\hat{\by{u}}(\by{f},\by{C}(\by{f},\xi),t)$, the following assertions hold. 
\begin{itemize}
    \item [{\rm (i)}] There exists a constant $L>0$ such that
\begin{equation}
    \|\hat{\by{u}}(\by{f},\by{C}(\by{f},\xi),t)-\hat{\by{u}}(\by{f},\by{C}(\by{f},\xi'),t)\|\leq  L\eta_{\Delta^\top}(\theta_1+\theta_2)  
    \|\xi-\xi'\|.
\end{equation}
for all $\xi,\xi'\in\Xi, \by{f}\in\mathbb{F}_t$ and $t\geq 0$.
    \item [{\rm (ii)}] There exist a constant $\hat{C}(t)$ dependent on $t$ such that  
\begin{equation}
     \|\nabla_{\by{f}}\hat{\by{u}}(\by{f},\by{C}(\by{f},\xi),t)-\nabla_{\by{f}}\hat{\by{u}}(\by{f},\by{C}(\by{f},\xi'),t)\|\leq 
     L\eta_{\Delta^\top}\left(\theta_1\eta_{\Delta}+\theta_2\eta_{\Delta}+\theta_2\hat{C}(t)\right) \|\xi-\xi'\|,
\end{equation}
\end{itemize}
for all $\xi,\xi'\in\Xi, \by{f}\in\mathbb{F}_t$ and $t>0 $, where $\mathbb{F}_t$ is defined as in Assumption \ref{asm:compact-MSVIP}, $\hat{C}(t)=\frac{C_0'}{2t}$, $C_0'=\sup_{\by{f}\in\mathbb{F}_t,\xi\in\Xi}\|\nabla_{\by{f}}\by{C}(\by{f},\xi)\|$ and $\eta_\Delta=\|\Delta\| $ for matrix $\Delta\in\R^{A\times N}$.
\end{proposition}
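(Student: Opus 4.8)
The plan is to split $\hat{\by u}=\by g+\by h$ and bound the two summands separately, relying on two elementary Lipschitz facts about the scalar penalty $h(\cdot,t)$ together with the composite structure $\by C(\by f,\xi)=\Delta^\top\by T(\Delta\by f,\xi)$.

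For part (i) I would first note that in $\by g(\by f,\by C(\by f,\xi))=\theta_0\by d+\theta_1\by C(\by f,\xi)$ the constant $\theta_0\by d$ cancels in the difference, leaving $\theta_1[\by C(\by f,\xi)-\by C(\by f,\xi')]$. For the penalty, differentiating the three branches of \eqref{eq:Alexander-smoothing} shows $h'(z,t)\in[0,1]$ (on the middle branch $h'(z,t)=(z+t)/(2t)$ increases from $0$ to $1$), so $h(\cdot,t)$ is $1$-Lipschitz; acting componentwise, this gives $\|\by h(\by f,\by C(\by f,\xi),t)-\by h(\by f,\by C(\by f,\xi'),t)\|\le\theta_2\|\by C(\by f,\xi)-\by C(\by f,\xi')\|$. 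Both contributions are therefore governed by $\|\by C(\by f,\xi)-\by C(\by f,\xi')\|$, and from $\by C(\by f,\xi)=\Delta^\top\by T(\Delta\by f,\xi)$ with the first estimate of Assumption~\ref{asm:ATT} one gets $\|\by C(\by f,\xi)-\by C(\by f,\xi')\|\le\eta_{\Delta^\top}L\|\xi-\xi'\|$. Summing the $\theta_1$ and $\theta_2$ terms yields the claimed bound.

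For part (ii) I would differentiate through the chain rule, using $\nabla_{\by f}\by C(\by f,\xi)=\Delta^\top\nabla_{\by v}\by T(\Delta\by f,\xi)\Delta$ and $\nabla_{\by f}\by h=\theta_2\diag(h'(z_r,t))\nabla_{\by f}\by C$, where $z_r=C_r(\by f,\xi)-\tau_k$. The $\by g$ part is routine: its Jacobian difference is $\theta_1\Delta^\top[\nabla_{\by v}\by T(\by v,\xi)-\nabla_{\by v}\by T(\by v,\xi')]\Delta$, which by submultiplicativity of the operator norm and the second estimate of Assumption~\ref{asm:ATT} is bounded by $\theta_1\eta_{\Delta^\top}\eta_\Delta L\|\xi-\xi'\|$.

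The main obstacle is the penalty Jacobian, a product $\diag(h'(\cdot,t))\nabla_{\by f}\by C$ in which both factors vary with $\xi$. I would treat it by the standard add-and-subtract splitting (writing $\by h(\xi)$ for $\by h(\by f,\by C(\by f,\xi),t)$):
\begin{align*}
\nabla_{\by f}\by h(\xi)-\nabla_{\by f}\by h(\xi')
&=\theta_2\diag(h'(z_r(\xi),t))\bigl[\nabla_{\by f}\by C(\by f,\xi)-\nabla_{\by f}\by C(\by f,\xi')\bigr]\\
&\quad+\theta_2\bigl[\diag(h'(z_r(\xi),t))-\diag(h'(z_r(\xi'),t))\bigr]\nabla_{\by f}\by C(\by f,\xi').
\end{align*}
For the first term, $\|\diag(h'(z_r,t))\|\le1$ and the Jacobian difference is bounded exactly as in the $\by g$ case, giving $\theta_2\eta_{\Delta^\top}\eta_\Delta L\|\xi-\xi'\|$. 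The delicate term is the second: it needs a Lipschitz estimate for $h'(\cdot,t)$ itself. Computing $h''$ shows $h'(\cdot,t)$ is Lipschitz with constant $1/(2t)$ (from the quadratic middle branch), which is precisely where the $t$-dependent constant $\hat{C}(t)=C_0'/(2t)$ originates; the blow-up as $t\downarrow0$ reflects the nonsmoothness of the limiting $\max$-penalty. Hence $\|\diag(h'(z_r(\xi),t))-\diag(h'(z_r(\xi'),t))\|\le\frac{1}{2t}\|\by C(\by f,\xi)-\by C(\by f,\xi')\|\le\frac{1}{2t}\eta_{\Delta^\top}L\|\xi-\xi'\|$, and bounding $\|\nabla_{\by f}\by C(\by f,\xi')\|\le C_0'$ gives $\theta_2\hat{C}(t)\eta_{\Delta^\top}L\|\xi-\xi'\|$. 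Adding the three pieces produces the constant $L\eta_{\Delta^\top}(\theta_1\eta_\Delta+\theta_2\eta_\Delta+\theta_2\hat{C}(t))$, as required.
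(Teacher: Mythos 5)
Your proof is correct, and it reaches the same constants by the same basic decomposition ($\hat{\by u}=\by g+\by h$, Lipschitz continuity of the scalar penalty for part (i), add-and-subtract on the product for part (ii)); the difference lies in how the penalty Jacobian is handled. The paper works componentwise and performs an explicit case analysis over the three branches of \eqref{eq:Alexander-smoothing}, writing out the add-and-subtract identity only for the case where $C_r(\by f,\xi)$ and $C_r(\by f,\xi')$ fall in the \emph{same} branch, and then asserts that the final bound ``is not hard to obtain''; the mixed-branch cases (e.g.\ $C_r(\by f,\xi)$ in the quadratic piece, $C_r(\by f,\xi')$ in the linear piece) are left implicit. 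You instead compress the branch structure into two scalar facts --- $h'(\cdot,t)\in[0,1]$ and $h'(\cdot,t)$ globally Lipschitz with constant $1/(2t)$, both immediate from the continuity of $h'$ across the kinks at $z=\pm t$ --- and then run the add-and-subtract once, at the level of $\diag(h'(z_r,t))\nabla_{\by f}\by C$. This is the same mechanism (the $1/(2t)$ comes from the quadratic middle piece in both arguments, and your middle-branch coefficient $\theta_2 h'(z_r,t)$ is exactly the paper's $\tfrac{\theta_2}{2t}(C_r-\tau_k+t)$), but your formulation covers all branch combinations uniformly, so it actually closes the small gap the paper leaves open, at the cost of being slightly less explicit about where each constant arises branch by branch.
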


The proposition above implies that $\hat{\by{u}}(\by{f},\by{C}(\by{f},\xi),t)$ is locally Lipschitz continuous in $\xi$ and its Jacobian $\nabla_{\by{f}}\hat{\by{u}}(\by{f},\by{C}(\by{f},\xi),t)$ is locally Lipschtz continuous with the modulus related to parameter $t$. 
The explicit form of the 
Jacobian matrix 
of 
$\hat{\by{u}}(\by{f},\by{C}(\by{f},\xi),t)$
can be obtained.  
Observe that $\nabla_{\by{f}}\by{g}(\by{f},\by{C}(\by{f},\xi))=\theta_1\Delta^\top\nabla_{\by{v}}\by{T}(\Delta\by{f},\xi)\Delta$ and 
\begin{equation}
    \left[\nabla_{\by{f}}\hat{\by{h}}(\by{f},\by{C},t)\right]_{r}=\left\{
    \begin{array}{ll}
    \theta_2[\Delta^\top]_r\nabla_{\by{v}}\by{T}(\Delta\by{f},\xi)\Delta, & \inmat{for} \ t< C_r-\tau_k,\\
    \frac{\theta_2\left(C_r(\by{f},\xi)+t-\tau_k\right)}{2t}[\Delta^\top]_r\nabla_{\by{v}}\by{T}(\Delta \by{f},\xi)\Delta, & \inmat{for} \ -t\leq C_r-\tau_k\leq t,\\
    0\in\R^{1\times N}, & \inmat{for} \ C_r-\tau_k< -t,
    \end{array}
    \right.
\end{equation} 
for $r=1,\cdots,N$ and $k=1,\cdots,W$, where $[A]_r$ denotes the $r$th row vector of matrix $A$. 
Thus
\begin{equation}
    \left[\nabla_{\by{f}}\hat{\by{u}}(\by{f},\by{C},t)\right]_{r}=\left\{
    \begin{array}{ll}
    (\theta_1+\theta_2)[\Delta^\top]_r\nabla_{\by{v}}\by{T}(\Delta\by{f},\xi)\Delta, & \inmat{for} \ t< C_r-\tau_k,\\
    \left(\theta_1+\frac{\theta_2\left(C_r(\by{f},\xi)+t-\tau_k\right)}{2t}\right)[\Delta^\top]_r\nabla_{\by{v}}\by{T}(\Delta \by{f},\xi)\Delta, & \inmat{for} \ -t\leq C_r-\tau_k\leq t,\\
    \theta_1[\Delta^\top]_r\nabla_{\by{v}}\by{T}(\Delta \by{f},\xi)\Delta, & \inmat{for} \ C_r-\tau_k< -t.
    \end{array}
    \right.
\end{equation} 
Notice that $\nabla_{\by{f}}\hat{\by{u}}(\by{f},\by{C},t)$ can be written as $\diag(b_1,\cdots,b_N)\Delta^\top\nabla_{\by{v}}T(\Delta\by{f},\xi)\Delta$, where $b_r$ is either $(\theta_1+\theta_2)$ or $\left(\theta_1+\frac{\theta_2\left(C_r(\by{f},\xi)+t-\tau_k\right)}{2t}\right)$ or $\theta_1$ for $r=1,\cdots,N$.  


Under the proposition above and Lemma \ref{lem:LIp-solu} in the appendix, we are able to derive the main stability result about an isolated MLAPUE solution to (\ref{eq:SVI-LAPUE-smooting}).


\begin{theorem}[Stability of MLAPUE]
\label{thm:Lips-solu}
Let $\by{f}_t(P)\in\by{\mathcal{F}}_t(P)$ be a strongly regular solution to MSVIP \eqref{eq:SVI-LAPUE-smooting} and $\mathcal{O}_t\subset\mathbb{F}_t$ be an open neighborhood of $\by{f}_t(P)$, where $\mathbb{F}_t$ is  defined as in Assumption \ref{asm:compact-MSVIP}.
Let 
\bgeqn
\label{def:p-moment}
{\cal M}_k^1 := \left\{{\bm Q}\in \mathscr{P}(\Xi): \int_{\Xi} \|\xi\|{\bm Q}(d{\xi})< \infty \right\}.
\edeqn
The following assertion hold. 
\begin{itemize}
 
    \item [{\rm (i)}] Under Assumption \ref{asm:ATT}, 
   there exist a unique 
   vector-valued function 
   $\by{f}_t(\cdot)$  defined 
   in a neighborhood of $P$ and 
   positive 
   constants $\delta$ and $\kappa$ such that
    \bgeqn
    \|\by{f}_t(Q_1)-\by{f}_t(Q_2)\|\leq L\eta_{\Delta^\top}\kappa\left(\theta_1+\theta_2+\theta_1\eta_{\Delta}+\theta_2\eta_{\Delta}+\theta_2\hat{C}(t)\right)\dd_K(Q_1,Q_2)
    \edeqn
    for any $Q_1,Q_2\in{\cal M}_k^1$ satisfying $\dd_K(Q_1,P)\leq\delta$ and $\dd_K(Q_2,P)\leq\delta$, where ${\cal M}_k^1$ is defined as in \eqref{def:p-moment}, $\hat{C}(t)$ is defined as in Proposition \ref{prop:u-LipC} and $\dd_K(Q_1,Q_2)$ denotes the Kantorovich metric between $Q_1$ and $Q_2$, defined as in \eqref{dist-Kan} in the appendix.

    \item [{\rm (ii)}] Let
    \bgeqn
    \mathcal{M}_{k,M_0}^{\gamma}:=\left\{Q\in\mathscr{P}(\R^k):\int_{\R^k}\|\xi\|^{\gamma}Q(d\xi)\leq M_0\right\}
    \edeqn
    for some constants $\gamma>1$ and $M_0>0$. If, in addition, 
    there is a continuous solution $\by{f}_t(\cdot):\mathcal{M}_{k,M_0}^{\gamma}\to\R^N$ such that the strongly regular condition holds at $\by{f}_t(P')$ for every $P'\in\mathcal{M}_{k,M_0}^{\gamma}$, then there exists a constant $\tilde{C}$ such that 
    \bgeqn\label{ieq:SR-solu}
    \|\by{f}_t(Q_1)-\by{f}_t(Q_2)\|\leq L\tilde{C}\eta_{\Delta^\top}\left(\theta_1+\theta_2+\theta_1\eta_{\Delta}+\theta_2\eta_{\Delta}+\theta_2\hat{C}(t)\right)\dd_K(Q_1,Q_2), 
    \edeqn
    for all $ Q_1,Q_2\in\mathcal{M}_{k,M_0}^{\gamma}$.
\end{itemize} 
\end{theorem}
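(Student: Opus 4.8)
The plan is to decouple the statement into two pieces: a \emph{deterministic} perturbation estimate coming from the strong regularity of the generalized equation (Lemma~\ref{lem:LIp-solu}), and a \emph{probabilistic} estimate relating the perturbation of the expected‑value map $\hat{\phi}_t(\by{f},Q)=\mathbb{E}_Q[\hat{\by{u}}(\by{f},\by{C}(\by{f},\xi),t)]$ to the Kantorovich distance $\dd_K(Q_1,Q_2)$. The bridge between the two is the Kantorovich--Rubinstein duality $\dd_K(Q_1,Q_2)=\sup\{\int_\Xi g\,d(Q_1-Q_2):\mathrm{Lip}(g)\leq1\}$, which converts a Lipschitz‑in‑$\xi$ bound on an integrand into a $\dd_K$‑bound on its integral.

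For part (i), I would fix the neighborhood $\mathcal{V}$ of $\by{f}_t(P)$ supplied by strong regularity and bound $\|\hat{\phi}_t(\cdot,Q_1)-\hat{\phi}_t(\cdot,Q_2)\|_{1,\mathcal{V}}$, the $C^1$‑type norm of~\eqref{def:norm-psi}. For the value part, Proposition~\ref{prop:u-LipC}(i) states that $\hat{\by{u}}(\by{f},\by{C}(\by{f},\cdot),t)$ is Lipschitz in $\xi$ uniformly in $\by{f}\in\mathbb{F}_t$ with constant $L\eta_{\Delta^\top}(\theta_1+\theta_2)$, so applying the duality componentwise gives $\sup_{\by{f}\in\mathcal{V}}\|\hat{\phi}_t(\by{f},Q_1)-\hat{\phi}_t(\by{f},Q_2)\|\leq L\eta_{\Delta^\top}(\theta_1+\theta_2)\dd_K(Q_1,Q_2)$. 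For the Jacobian part, I would use that differentiation passes under the expectation (justified in the discussion after Proposition~\ref{prop:property-of-u}) together with Proposition~\ref{prop:u-LipC}(ii) to obtain $\sup_{\by{f}\in\mathcal{V}}\|\nabla_{\by{f}}\hat{\phi}_t(\by{f},Q_1)-\nabla_{\by{f}}\hat{\phi}_t(\by{f},Q_2)\|\leq L\eta_{\Delta^\top}(\theta_1\eta_\Delta+\theta_2\eta_\Delta+\theta_2\hat{C}(t))\dd_K(Q_1,Q_2)$. Summing the two produces exactly the coefficient $L\eta_{\Delta^\top}(\theta_1+\theta_2+\theta_1\eta_\Delta+\theta_2\eta_\Delta+\theta_2\hat{C}(t))$, and Lemma~\ref{lem:LIp-solu} (the implicit‑function theorem under strong regularity, with modulus $\kappa$) then transfers this to $\|\by{f}_t(Q_1)-\by{f}_t(Q_2)\|\leq\kappa\,\|\hat{\phi}_t(\cdot,Q_1)-\hat{\phi}_t(\cdot,Q_2)\|_{1,\mathcal{V}}$, which is the claim. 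Existence and local uniqueness of the selection $\by{f}_t(\cdot)$ near $P$ are themselves part of what strong regularity delivers.

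For part (ii) the task is to globalise the local estimate of part (i) to all of $\mathcal{M}_{k,M_0}^{\gamma}$. The key observation is that $\mathcal{M}_{k,M_0}^{\gamma}$ is convex and that the mixture segment $\mu_s=(1-s)Q_1+sQ_2$ is a constant‑speed $\dd_K$‑geodesic: since $\mu_{s'}-\mu_s=(s'-s)(Q_2-Q_1)$, duality gives $\dd_K(\mu_s,\mu_{s'})=|s'-s|\,\dd_K(Q_1,Q_2)$, while $\int\|\xi\|^\gamma\,d\mu_s\leq M_0$ shows the segment stays in $\mathcal{M}_{k,M_0}^{\gamma}$. I would then cover the compact path $\{\mu_s:s\in[0,1]\}$ by finitely many neighborhoods on which part~(i) applies with the local modulus at each $\mu_s$, choose a partition $0=s_0<\cdots<s_n=1$ fine enough that consecutive points lie in a common neighborhood, and telescope: $\|\by{f}_t(Q_1)-\by{f}_t(Q_2)\|\leq\sum_i\tilde{C}(\cdots)\dd_K(\mu_{s_i},\mu_{s_{i+1}})=\tilde{C}(\cdots)\dd_K(Q_1,Q_2)$, the last equality using the geodesic property. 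The uniform constant $\tilde{C}$ is obtained because $\mathcal{M}_{k,M_0}^{\gamma}$ is $\dd_K$‑compact --- tightness follows from Markov's inequality $Q(\|\xi\|>R)\leq M_0/R^\gamma$ and uniform integrability of $\|\xi\|$ holds precisely because $\gamma>1$ --- so the local strong‑regularity moduli, being upper semicontinuous in $P'$, admit a finite bound on it.

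The main obstacle I expect is exactly this uniformisation in part (ii): part (i) yields only a local constant $\kappa=\kappa(P')$ valid on an unspecified neighborhood of each $P'$, and converting the pointwise strong‑regularity hypothesis into a single global $\tilde{C}$ requires both the $\dd_K$‑compactness of $\mathcal{M}_{k,M_0}^{\gamma}$ and upper semicontinuity (stability) of the modulus under perturbation of $P'$, together with a genuinely uniform radius for the covering neighborhoods so that the chaining closes. By contrast the value and Jacobian estimates are routine once the duality is in place; the delicate bookkeeping is ensuring the $C^1$‑norm perturbation bound is uniform over the $\by{f}$‑neighborhood and that it is the strict inequality $\gamma>1$, not merely $\gamma=1$, that renders the moment‑constrained family relatively compact.
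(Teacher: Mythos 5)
Your proposal is correct and follows essentially the same route as the paper's proof: part (i) combines Lemma \ref{lem:LIp-solu} with Kantorovich--Rubinstein duality applied to the value and Jacobian estimates of Proposition \ref{prop:u-LipC} to bound the $\|\cdot\|_{1,\mathcal{O}_t}$-perturbation by exactly the stated coefficient, and part (ii) globalises via relative $\dd_K$-compactness of the moment-bounded set (Prokhorov, using $\gamma>1$), a finite covering on which the local estimate applies, and telescoping along the mixture segment $Q(\beta)=(1-\beta)Q_1+\beta Q_2$, whose linear $\dd_K$-scaling the paper uses implicitly and you make explicit. The only cosmetic differences are that the paper takes its $\epsilon$-net over the whole set $\mathcal{M}_{k,M_0}^{\gamma}$ rather than over the segment, and controls vector norms by suprema over unit-vector projections rather than componentwise (avoiding a dimensional factor that would otherwise be absorbed into $\kappa$), neither of which changes the argument.
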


Part (i) of the theorem is an implicit function theorem 
for MSVIP-ptb \eqref{eq:GE-purt}  in the space of probability measures ${\cal M}_k^1$. Part (ii) is about 
global Lipschitz continuity of MLAPUE solution mapping
over the $\mathcal{M}_{k,M_0}^{\gamma}$.  
In the case that when the support set of $\xi$ is 
a compact set, $\mathcal{M}_{k,M_0}^{\gamma}$ coincides with $\mathscr{P}(\Xi)$.
With Theorem \ref{thm:Lips-solu}, we are ready to state 
the main 
quantitative statistical robustness of 
an estimator of
MLAPUE ${\by{f}}_{t}(\cdot)$.
The assumption on the continuous solution $\by{f}_t(\cdot)$  over $\mathcal{M}_{k,M_0}^{\gamma}$ is strong and theoretical in general. It may be fulfilled under some circumstance when MSVIP \eqref{eq:SVI-LAPUE-smooting}
has a unique solution for all $P\in \mathscr{P}(\Xi)$. Again, this depends on the problem structure including the structure of the network and the properties of $\by{T}(\by{v},\xi)$.

With Theorem~\ref{thm:Lips-solu}, we are ready to state 
robustness of statistical estimator of MLAPUE
in the next theorem.

\begin{theorem}[Statistical robustness of MLAPUE estimator]
\label{thm:SR-LAPUE}
    Assume the conditions of Theorem~\ref{thm:Lips-solu}~(iii) hold. Let $\hat{\by{f}}_{M,t}=\by{f}_t(Q_M)$ be a statistical estimator
of $\by{f}_t(Q)$, where $\by{f}_t(\cdot)$ is defined as 
in Theorem \ref{thm:Lips-solu}.
    If the support of $\xi$, denoted by $\Xi$, is a compact set, then 
    \bgeqn
    \label{ieq:SR-solu-1}
    \dd_K\left(P^{\otimes M}\circ\hat{\by{f}}_{M,t}^{-1},Q^{\otimes  M}\circ\hat{\by{f}}_{M,t}^{-1}\right)\leq L\tilde{C}\eta_{\Delta^\top}\left(\theta_1+\theta_2+\theta_1\eta_{\Delta}+\theta_2\eta_{\Delta}+\theta_2\hat{C}(t)\right)\dd_K(P,Q)
    \edeqn
    for all $M$ and $P,Q\in\mathscr{P}(\Xi)$, where   $\hat{C}(t)$ is defined as in Proposition \ref{prop:u-LipC}, $\tilde{C}$ is defined as in Theorem \ref{thm:Lips-solu} and $P^{\otimes M}$ denotes the Cartesian product of $\underbrace{P\times\cdots\times P}_{M}$. 
    
\end{theorem}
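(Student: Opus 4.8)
The plan is to reduce the assertion to the global Lipschitz continuity of the MLAPUE solution map furnished by Theorem~\ref{thm:Lips-solu}, and then to combine this with two classical facts about the Kantorovich metric: its Kantorovich--Rubinstein dual representation and its subadditivity under the formation of product measures. To fix notation, write
$$
\kappa := L\tilde{C}\eta_{\Delta^\top}\left(\theta_1+\theta_2+\theta_1\eta_{\Delta}+\theta_2\eta_{\Delta}+\theta_2\hat{C}(t)\right)
$$
for the modulus appearing on the right-hand side of \eqref{ieq:SR-solu}, so that the target estimate \eqref{ieq:SR-solu-1} is exactly $\dd_K\!\big(P^{\otimes M}\circ\hat{\by{f}}_{M,t}^{-1},Q^{\otimes M}\circ\hat{\by{f}}_{M,t}^{-1}\big)\leq\kappa\,\dd_K(P,Q)$.

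First I would explain why the global estimate of Theorem~\ref{thm:Lips-solu} applies to every pair $P,Q\in\mathscr{P}(\Xi)$. Since $\Xi$ is compact, $M_0:=\sup_{\xi\in\Xi}\|\xi\|^{\gamma}<\infty$, hence $\mathscr{P}(\Xi)\subseteq\mathcal{M}_{k,M_0}^{\gamma}$, and in particular every empirical measure built from samples in $\Xi$ also lies in $\mathcal{M}_{k,M_0}^{\gamma}$. Thus $\by{f}_t(\cdot)$ is single-valued on this class and $\|\by{f}_t(\mu)-\by{f}_t(\nu)\|\leq\kappa\,\dd_K(\mu,\nu)$ holds for all such $\mu,\nu$. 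Next I would invoke the duality $\dd_K(\alpha,\beta)=\sup_{h\in\mathrm{Lip}_1(\R^N)}\big(\int h\,d\alpha-\int h\,d\beta\big)$ and apply it to the two push-forward laws; for any fixed $1$-Lipschitz $h:\R^N\to\R$ the composite $g:=h\circ\hat{\by{f}}_{M,t}:\Xi^M\to\R$ satisfies
$$
\int h\,d\big(P^{\otimes M}\circ\hat{\by{f}}_{M,t}^{-1}\big)-\int h\,d\big(Q^{\otimes M}\circ\hat{\by{f}}_{M,t}^{-1}\big)=\int g\,dP^{\otimes M}-\int g\,dQ^{\otimes M},
$$
so the whole problem reduces to controlling the right-hand difference in terms of $\dd_K(P,Q)$.

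The crux is a Lipschitz estimate for $g$ on $\Xi^M$ endowed with the sum metric $\rho\big((\xi^i)_i,(\zeta^i)_i\big)=\sum_{i=1}^M\|\xi^i-\zeta^i\|$. Using the index-wise coupling one has $\dd_K\!\big(\tfrac1M\sum_i\delta_{\xi^i},\tfrac1M\sum_i\delta_{\zeta^i}\big)\leq\tfrac1M\sum_i\|\xi^i-\zeta^i\|$, whence, because $h$ is $1$-Lipschitz and $\hat{\by{f}}_{M,t}=\by{f}_t(Q_M)$ is the plug-in evaluation at the empirical measure, $g$ is $(\kappa/M)$-Lipschitz with respect to $\rho$. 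I would then tensorise: taking an optimal coupling $\pi$ of $P,Q$, the product coupling $\pi^{\otimes M}$ joins $P^{\otimes M}$ and $Q^{\otimes M}$ and gives $\int\rho\,d\pi^{\otimes M}=M\,\dd_K(P,Q)$, so the Kantorovich distance on $(\Xi^M,\rho)$ obeys $\dd_K^{\rho}(P^{\otimes M},Q^{\otimes M})\leq M\,\dd_K(P,Q)$. Combining the two facts yields
$$
\left|\int g\,dP^{\otimes M}-\int g\,dQ^{\otimes M}\right|\leq\frac{\kappa}{M}\,\dd_K^{\rho}(P^{\otimes M},Q^{\otimes M})\leq\kappa\,\dd_K(P,Q),
$$
and taking the supremum over all $1$-Lipschitz $h$ delivers \eqref{ieq:SR-solu-1}.

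The main obstacle I anticipate is not any single inequality but two points of rigour. The first is the exact cancellation of the factors $1/M$ (in the Lipschitz modulus of $g$) and $M$ (in the tensorised Kantorovich bound), which is what makes the final constant $M$-independent; getting the sum-metric bookkeeping consistent between the per-coordinate coupling on $\Xi^M$ and the Euclidean norm on the target $\R^N$ is the delicate part. The second is verifying that the \emph{global}, rather than merely local, Lipschitz estimate of Theorem~\ref{thm:Lips-solu} genuinely covers every empirical measure that can arise under both $P$ and $Q$; this is precisely where compactness of $\Xi$ is indispensable, since it supplies the uniform moment bound placing all relevant measures inside $\mathcal{M}_{k,M_0}^{\gamma}$ and thereby guarantees single-valuedness of $\by{f}_t(\cdot)$ along the entire argument.
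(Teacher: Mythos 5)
Your proposal is correct and follows essentially the same route as the paper's own proof: both arguments rest on showing that the plug-in map $\hat{\by{f}}_{M,t}:\Xi^M\to\R^N$ is $(\kappa/M)$-Lipschitz with respect to the sum metric on $\Xi^M$ (combining the elementary bound $\dd_K(P_M,Q_M)\leq\frac{1}{M}\sum_{i=1}^M\|\xi^i-\tilde{\xi}^i\|$ with the global Lipschitz estimate of Theorem \ref{thm:Lips-solu}, which applies to empirical measures because compactness of $\Xi$ places all of $\mathscr{P}(\Xi)$ inside $\mathcal{M}_{k,M_0}^{\gamma}$), and then on a tensorization property of the Kantorovich metric under product measures. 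The only difference is that where the paper outsources the tensorization step to Lemma 1 of \cite{GuoXu21a}, you prove it inline via the $M$-fold product of an optimal coupling of $P$ and $Q$, making the cancellation of the factors $1/M$ and $M$ explicit; this is a legitimate, self-contained substitute for the cited lemma.
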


The lhs of (\ref{ieq:SR-solu-1}) is the 
difference between 
the empirical distribution 
$P^{\otimes M}\circ\hat{\by{f}}_{M,t}^{-1}$ 
based on unperturbed samples and 
the empirical distribution $Q^{\otimes M}\circ\hat{\by{f}}_{M,t}^{-1}$ 
based on perturbed samples under the Kantorovich metric. The rhs provides a bound for the lhs
in terms of the difference between the original probability distributions generating the unperturbed and perturbed samples ($P$ and $Q$) under the Kantorovich metric.   
The constant at the rhs 
depends on $t$. As $t$ goes to $0$, the 
constant goes to infinity, which means the quantitative statistical robustness result is not applicable to the 
LAPUE. 
Since the inequality is derived under some sufficient conditions rather than necessary conditions, it does not mean  that  the LAPUE is not statistically robust -- it is simply because we are unable to establish the result under the current framework of analysis.  

{\color{black} 
The 
theoretical result is useful in at least two cases: (a) $P$ and $Q$ are known but in actual calculations, only empirical data of $Q$ are used. This is either because
 errors 
occur in the process of data generation, processing and recording, or the distribution of validation data (for the future)  
is shifted from the distribution of training data (in the past); (b) the difference between $Q$ and $P$
is known in the sense that
the shift/perturbation of the distribution   
is within a controllable range.  
In the case only when perceived data (the sample data of $Q$) are known, we will not be able to say much about the quality of MLAPUE.
Since the result is built on Theorem \ref{thm:Lips-solu},
it might be desirable 
to relax some of the conditions imposed on 
Theorem~\ref{thm:Lips-solu}
such as strong regularity to extend the applicability of 
Theorem~\ref{thm:SR-LAPUE}.

}

\section{Numerical tests}
\label{se4：NT}
We have carried out 
 some numerical experiments on 
 the MLAPUE model. In this section, we report the 
 test results. For the vector of 
 arc travel time functions $\by{T}(\by{v},\xi)$, we 
 set each of its 
 components 
 $T_a(\by{v},\xi)$ to 
 be a GBPR function,
\bgeqn
\label{func:GBPR-2}
T_a(\by{v},\xi)=t_a^0\left(1+b_a\left(\frac{v_a}{C_a(\xi)}\right)^{n_a}\right),
\edeqn
where $t_a^0,b_a$ and $n_a$ are given parameters,
$C_a(\xi)$ denotes the random capacity of arc $a$ 
in scenario $\xi$ and $v_a$ 
is flow at arc $a$.

\subsection{Example 1: a simple 2-OD network}
To 
examine the effect 
of the perturbation of probability distribution 
on arc travel time, 
we first consider a 2-OD pairs network varied from \cite{zhu2019}, see Figure \ref{fig:network-1}. 
For network 1, we let the path from node $1$ to node $2$ be the 1-OD pair with demand $q_1=3500$ and from node $1$ to node $3$ be the 2-OD pair with demand $q_2=4000$. In this case, we 
set the GBPR function as
\bgeqn
\label{func:GBPR-3}
T_{a}(\by{v},\xi)=t_{a}^0\left(1+0.15\left(\frac{v_{a}}{C_{a}(\xi)}\right)^4\right).
\edeqn
Detail
of other parameters
are given  in Table \ref{tab:detail-n1}.
Following \cite{zhu2019}, we assume that $C_a(\xi)$ follows a normal distribution $C_{a}(\xi)\sim N(\mu_{a},\sigma_{a})$ and denote its CDF by $F(x;\mu_a,\sigma_a)$ for $a=1,\cdots,6$.  
It is easy to verify that the arc-path incidence matrix of network 1 is of full 
column  rank, and that the Jacobian matrix of travel time function $\by{T}(\by{v},\xi)$ is a diagonal matrix with full rank for $\by{v}\neq0$. Thus corresponding MSVIP problem \eqref{eq:SVI-LAPUE-smooting} satisfies strong regularity condition and has a unique MLAPUE.

\begin{figure}[htbp]
    \centering
	\begin{minipage}{0.3\linewidth}
		\centering
		\begin{tikzpicture}
\node[circle,
minimum width =3pt,
minimum height =3pt, draw=black] (1) at(0,0){1};
\node[circle,
minimum width =3pt,
minimum height =3pt, draw=black] (2) at(6,2){2};
\node[circle,
minimum width =3pt,
minimum height =3pt, draw=black] (3) at(3,-2){3};
\node[circle,
minimum width =3pt,
minimum height =3pt, draw=black] (4) at(3,0){4};
\node[circle,
minimum width =3pt,
minimum height =3pt, draw=black] (5) at(6,0){5};
\draw[arrow] (1) -- node [above] {$1$} (2);
\draw[arrow] (1) -- node [above] {$2$} (3);
\draw[arrow] (1) -- node [above] {$3$} (4);
\draw[arrow] (4) -- node [above] {$4$} (5);
\draw[arrow] (5) -- node [right] {$5$} (2);
\draw[arrow] (4) -- node [right] {$6$} (3);
\end{tikzpicture}
		\caption{Test network 1 ($\theta_0=0,\theta_1=1,\theta_2=2,\tau_1=27,\tau_2=22$)}
		\label{fig:network-1}
	\end{minipage}
	\hfill
	\begin{minipage}{0.5\linewidth}
		\centering
        \begin{tabular}{llll}
    \toprule
    Demand\\
    \midrule
    OD pair & Demand & Path\\
    1-2     & 3500   & $\{1\},\{3,4,5\}$\\
    1-3     & 4000   & $\{3,6\},\{2\}$\\
    \midrule
     Arc\\
   \midrule
         \textbf{Arc} & $t_{a_i}^0$ & $\mu_{a_i}$ &  $\sigma_{a_i}$ \\
    \midrule
        $ 1$& 16 &  1500  &  5\\
        $ 2$& 13 &  1500  &  30\\
        $ 3$& 9  &  3600  &  80\\
        $ 4$& 2  &  3600  &  5\\
        $ 5$& 3  &  1500  &  6\\
        $ 6$& 3  &  1500  &  30\\
    \bottomrule
    \end{tabular}
        \captionof{table}{Details for network 1}
        \label{tab:detail-n1}
	    \end{minipage}
\end{figure}

The UE path flows are $(2182,1318,850,3150)$ with corresponding disutility values \linebreak $(26.75,26.79,21.52,21.54)$. 
For parameter $t=0.01$,
the MLAPUE path flow are (2193,1306,860,
3139) with corresponding disutility values $(27.10,27.08,21.67,21.71)$. 
We can see that the flow patterns are similar, the  
disutility values are larger in the MLAPUE model
because of addition of the penalty of lateness.

Next, we 
investigate the effect of perturbation of the probability distribution on 
MLAPUE. We consider the case that 
only the distribution of the capacity at arc
$a=1$ is perturbed.
Let $P_a$ denote the distribution 
of the capacity at arc $a$ and $F(x;\mu_a,\sigma_a)$ its CDF.
Let  $Q_a$ 
be the perturbed distribution with CDF
\bgeqn
F_{Q_a}(x)=\left\{
\begin{array}{ll}
F(x;\mu_a,\sigma_a), &\inmat{for}\; x\leq x_0,\\
q + \beta(x-x_0), & \inmat{for}\; x_0\leq x \leq x_1,\\
1,&\inmat{for}\; x>x_1,
\end{array}\right.
\edeqn
where $x_0=F^{-1}(q;\mu_a,\sigma_a)$, $x_1=x_0+\frac{1}{\beta}(1-q)$, $\beta$ and $q\in(0,1)$ are fixed positive constants.
Figure~\ref{fig:CDF-PERT} 
depicts 
the difference of the two CDFs.
We use the CDFs to generate independent 
and identically distributed 
samples $C_a^j$ and $\tilde{C}_a^j$ for $j=1,\cdots,M$ respectively.
Let $P_M = \frac{1}{M}\sum_{j=1}^M\delta_{{C}_a^j}$ and $Q_M = \frac{1}{M}\sum_{j=1}^M\delta_{\tilde{C}_a^j}$.
Denote the MLAPUE 
obtained from solving 
\eqref{eq:GE-purt-saa} 
by $\by{f}_t(Q_M)$. 
 We use the CDFs of 
 $Q_a$ and $P_a$ to generate $L$ groups of samples with size $M=1000$. 
 We calculate 
 the MLAPUE 
 $\by{f}_t(Q_M)$ and $\by{f}_t(P_M)$ for each group of samples with
 corresponding arc flows 
 $\by{v}(Q_M)=\Delta\by{f}_t(Q_M)$ and $\by{v}(P_M)=\Delta\by{f}_t(P_M)$. 
 We can use each of 
the  $L$ data points 
to construct 
empirical CDFs. 
Figure \ref{fig:CDFs-500} depicts the CDFs of $v_{1}(P_M)$ and $v_1(Q_M)$  when $t=0.1$. We can see that the flow on arc $1$ is mainly distributed within the range $[2192,2195]$ and the blue dashed line approximates the red ones very well. Figure \ref{fig:arcflow_EMP} simulates the empirical distribution of $v_1(P_M)$ and $v_1(Q_M)$ when $L=10$ and $L=30$.
\begin{figure}[H]
  \centering
  \subfloat[]{
     \label{fig:CDF-PERT}
     \includegraphics[scale=0.4]{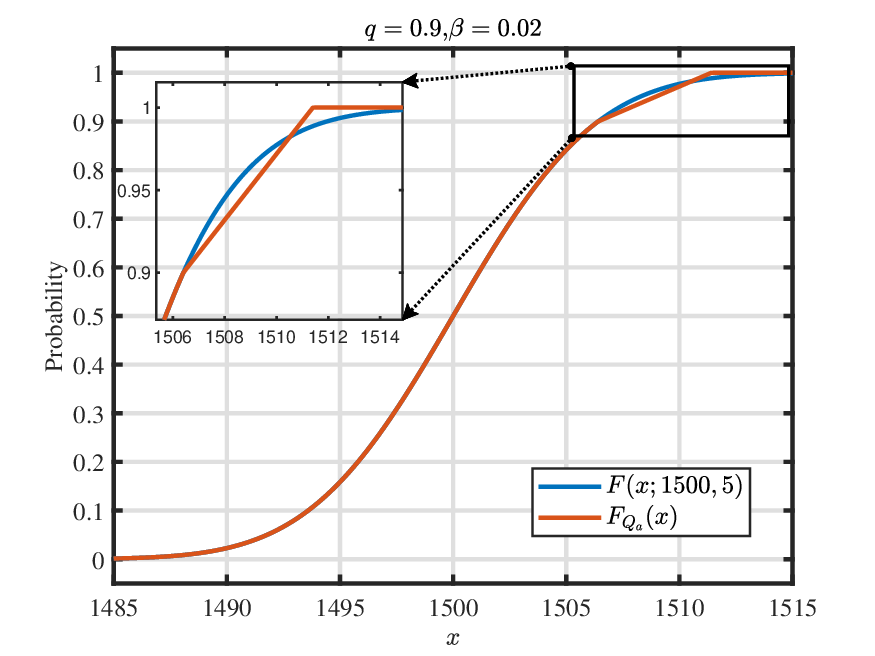}}
  \subfloat[]{
    \label{fig:CDFs-500}
    \includegraphics[scale=0.4]{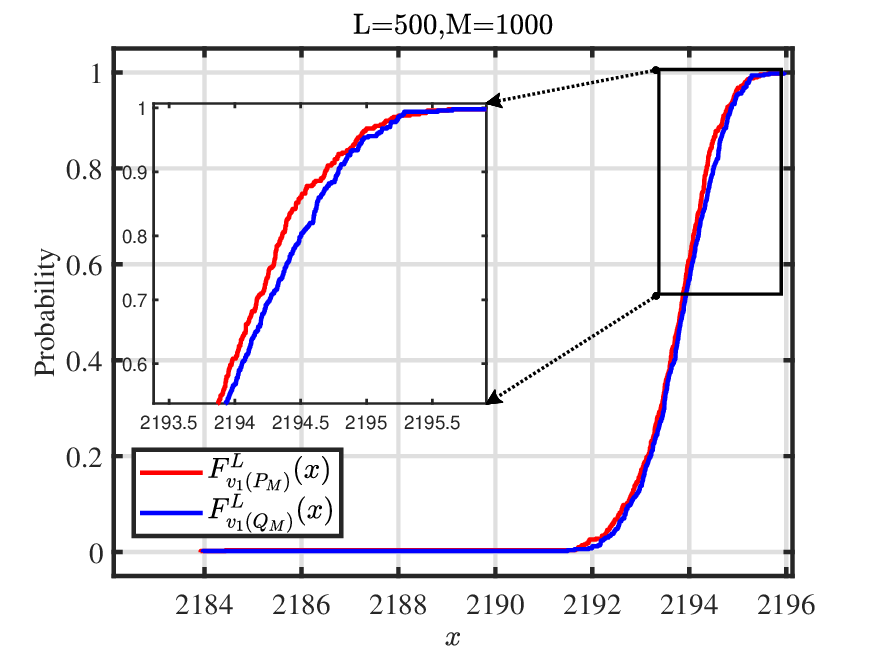}}
  \caption{(Color online) (a) $F_{Q_a}(x)$ and $F(x;1500,5)$. (b) Empirical distributions of $v_{1}(P_M)$ and $v_{1}(Q_M)$ based on $L=500$ simulations with parameter $t=0.01$.
  }
  \label{fig:arcflow}
\end{figure}
\begin{figure}[H]
  \centering
  \subfloat[]{
     \label{fig:LAPUEsolu}
     \includegraphics[scale=0.4]{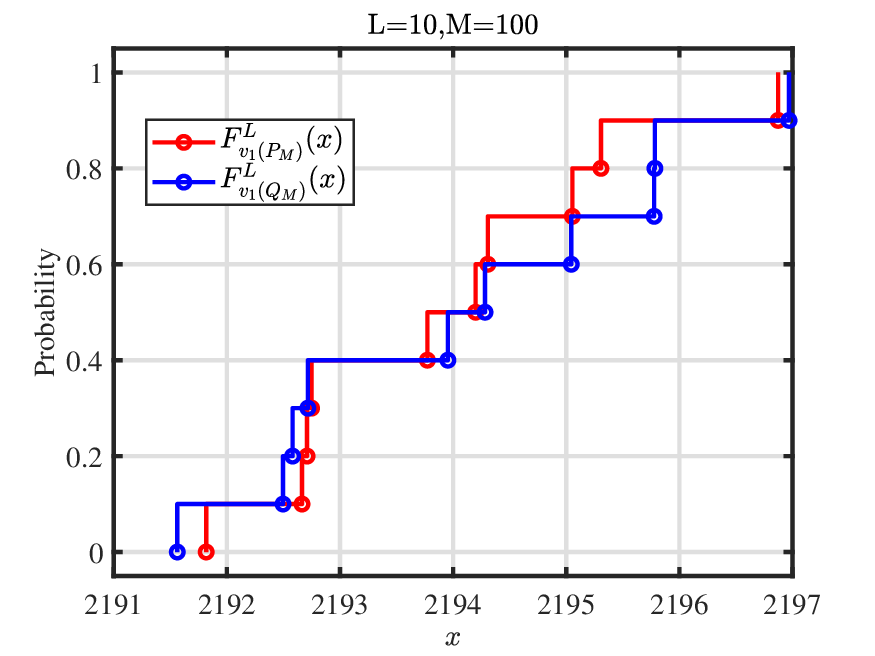}}
  \subfloat[]{
    \label{fig:CDFs}
    \includegraphics[scale=0.4]{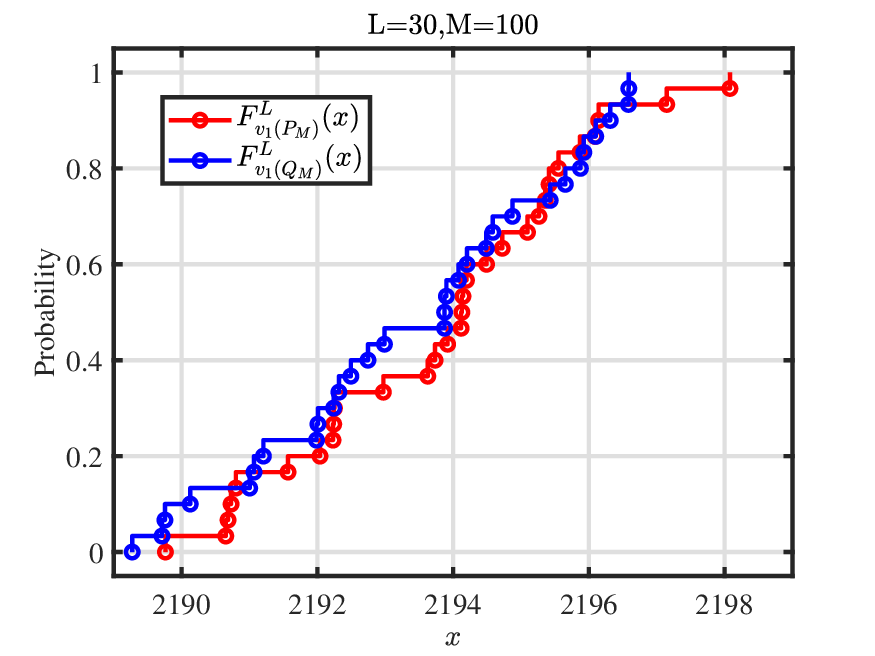}}
  \caption{ (Color online) (a)-(b) Empirical distributions of $v_{1}(P_M)$ and $v_{1}(Q_M)$ based on 
  $L=10$ and $L=30$ simulations with parameter $t=0.01$. 
  }
  \label{fig:arcflow_EMP}
\end{figure}

To examine the difference between the flows  $v_{1}(P_M)$ and $v_1(Q_M)$ at arc $1$,
we use the Kantorovich metric 
to quantify the difference between the CDFs of $v_{1}(P_M)$ and $v_1(Q_M)$ relative to
the Kantorovich metric on the difference between the CDFs of the input data generated by $F_{P_a}$ and $F_{Q_a}(x)$. 
Let $\Delta_2 = \dd_K(P_a,Q_a)$, and 
$$
\Delta_1^L=\dd_K(F_{v_1(P_M)}^L,F_{v_1(Q_M)}^L),
$$
where $F_{v_1(P_M)}^L$ is the empirical distribution of random variable $v_1(P_M)$ using $L$ samples. Figures~\ref{fig:box_ratia_01} and \ref{fig:box_ratia_1} 
display 
convergence
of the ratio $\frac{\Delta^L_1}{\Delta_2}$ as $L$ varies from $20$ to $200$ as $t=0.01$ and $t=0.1$, which implies that the ratio converges when $L$ increases. 
The ratio $\frac{\Delta^L_1}{\Delta_2}$ 
provides some insight for the constant 
at the rhs of \eqref{ieq:SR-solu-1}
in 
Theorem \ref{thm:SR-LAPUE}.

We have also conducted tests to evaluate the 
effect of
parameter $t$ on  MLAPUE.
Figure \ref{fig:Arc1-parameter-t} 
depicts the trend of flows on  path $1$ (which is also the flow of arc $1$)  as $t$ varies from $0.2$ to $2.2$.
The graph demonstrates that the flow exhibits an upward trend as parameter $t$ increases.
Figure \ref{fig:arcflow_EMP-1} displays
change of the influence function of $f_1(\cdot)$ when $t=0.01$ and $t=0.1$. 
We can see that the influence function $\inmat{IF}(\tilde{C}_1,f_1(\cdot),P_1)$ takes negative value. 
This occurs because, for path $1$, the outlier scenario arises when the capacity of arc $1$ is degraded (due to road incidents). 
To minimize user disutility, the flow on path $1$ is reduced accordingly.
\begin{figure}[http]
  \centering
  \subfloat[]{
  \label{fig:box_ratia_01}
  \includegraphics[scale=0.33]{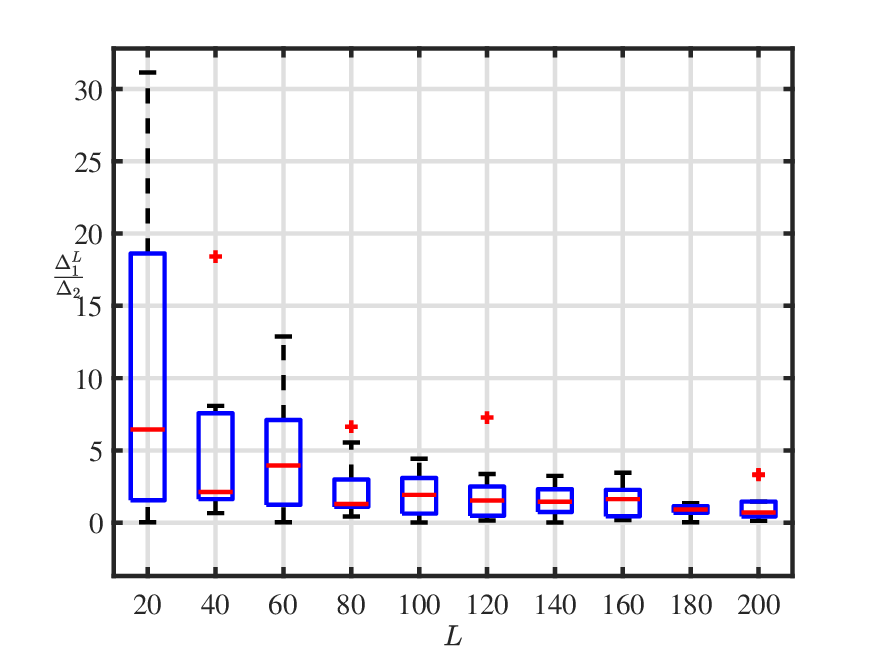}
  }
  \subfloat[]{
     \label{fig:box_ratia_1}
     \includegraphics[scale=0.33]{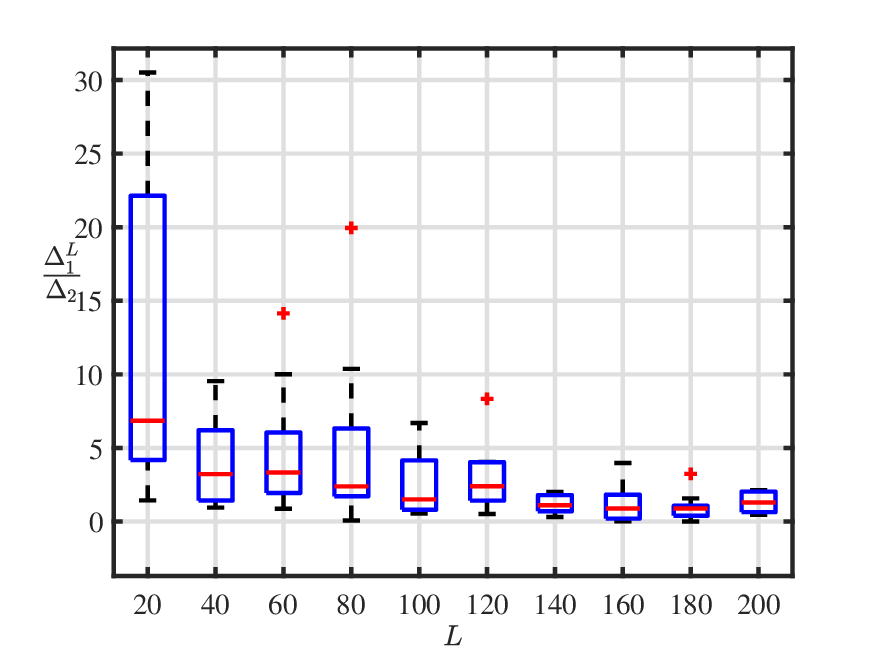}}
  \subfloat[]{
     \label{fig:Arc1-parameter-t}
     \includegraphics[scale=0.33]{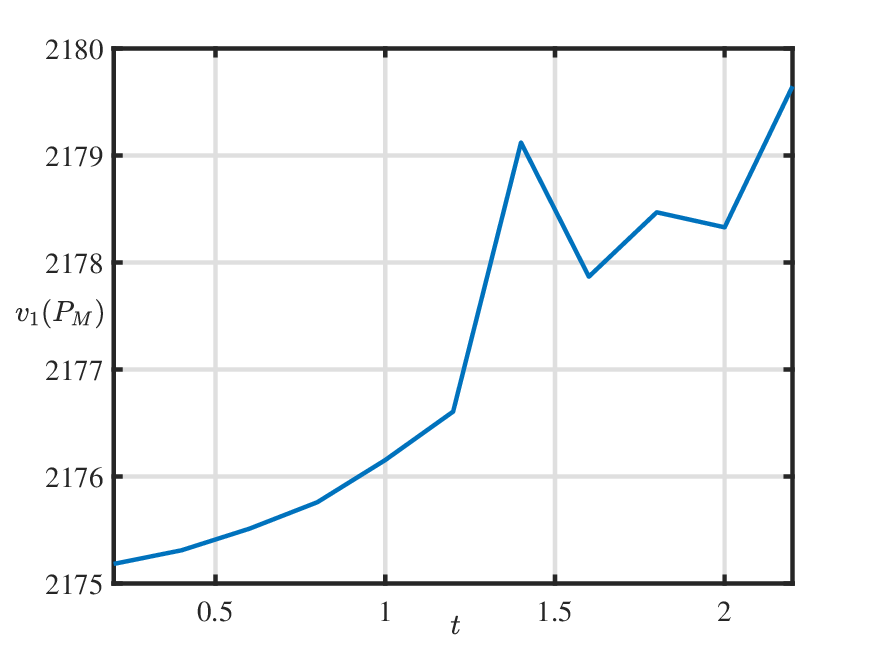}}
  \caption{ (Color online) (a)-(b) $\frac{\Delta_1^L}{\Delta_2}$ by $L$ simulations as $L$ varies from 20 to 200 
  when $t=0.01$ and $t=0.1$. (c) $v_1(P_M)$ when $t$ varies from $0.2$ to $2.2$.  
  }
  \label{fig:arcflow_EMP0}
\end{figure}
\begin{figure}[http]
  \centering
  \subfloat[]{
  \label{fig:LAPUEsolu_inf01}
  \includegraphics[scale=0.4]{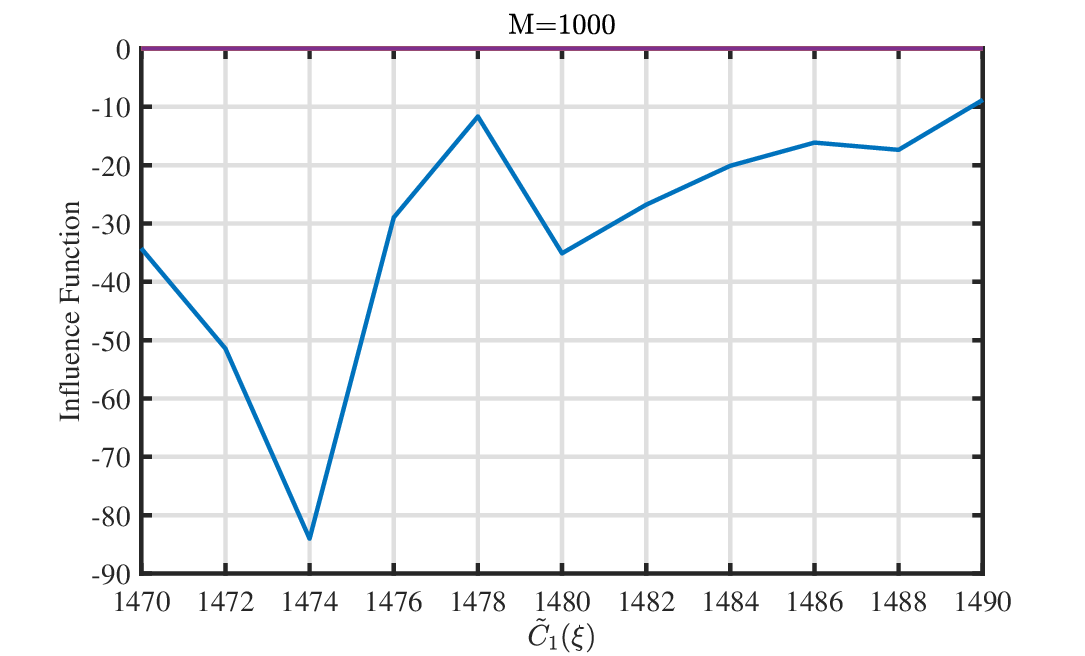}
  }
  \subfloat[]{
     \label{fig:LAPUEsolu_inf1}
     \includegraphics[scale=0.4]{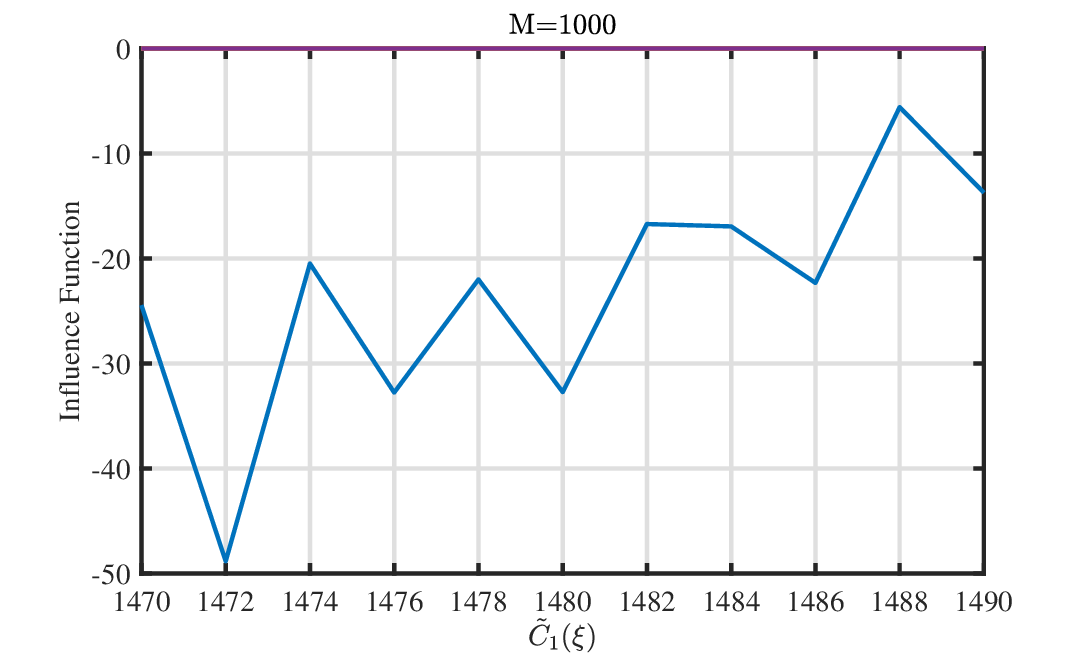}}
  \caption{ (Color online) (a)-(b) The influence function $IF(\tilde{C}_1(\xi), f_1(\cdot), P_1)$ when $\tilde{C}_1$ changes from $1470$ to $1490$ with  parameter $t=0.01$ and $t=0.1$.
  }
  \label{fig:arcflow_EMP-1}
\end{figure} 

Finally, we investigate the case that the breakdown  occurs on arc $1$ by replacing $m$ of the original 
 $M$ samples drown from the true normal distribution with $m$ 
 outliers. We calculate the MLAPUE and the corresponding minimum OD disutility for $10$ cases where $m$ takes values of $ 10, 20,\cdots, 100$ and $M$ is $1000$. The results are depicted in
 Figures~\ref{fig:LAPUEsolu-ODdisu}, \ref{fig:ODpairs-shift-pure} and  \ref{fig:pathFlow-shift-pure}. 
 Figure \ref{fig:LAPUEsolu-BR} shows that 
 as  the number of outliers increases,
 path flow 
 on arc $1$ (path 1) 
 decreases gradually at the MLAPUE, 
 whereas the 
 flow 
 on the other path 
 under the same 1-2 OD pair increases gradually. 
 Since the OD pair 1-2 and the OD pair 1-3 
 share arc $3$, the MLAPUE path flows over the OD pair 1-3 changes accordingly: 
 the flow over path 3
 gradually decreases, whereas
 the 
 flow on the other path gradually increases.
 Moreover, Figure \ref{fig:ODdisu} describes that the minimum OD disutility of each OD pair increases with increase of the number of outliers. 
 Furthermore, 
 Figures \ref{fig:ODpairs-shift-pure} and  \ref{fig:pathFlow-shift-pure} manifest the facts
 that the MLAPUE solutions and the corresponding minimum OD disutilities 
 deviate gradually  from those under the true normal distribution as the number of outliers increases. 

\begin{figure}[http]
  \centering
  \subfloat[]{
     \label{fig:LAPUEsolu-BR}
     \includegraphics[scale=0.4]{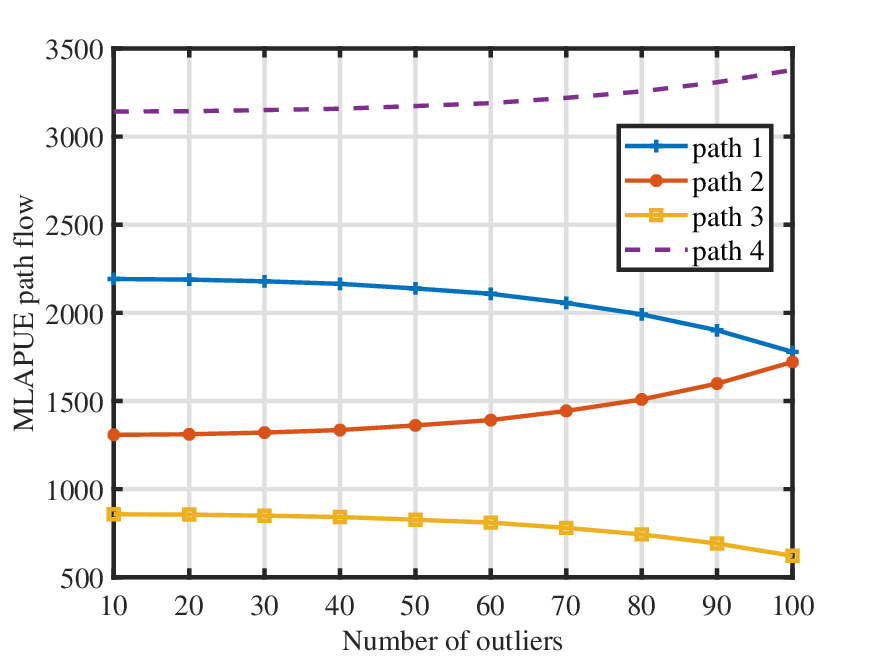}}
  \subfloat[]{
    \label{fig:ODdisu}
    \includegraphics[scale=0.4]{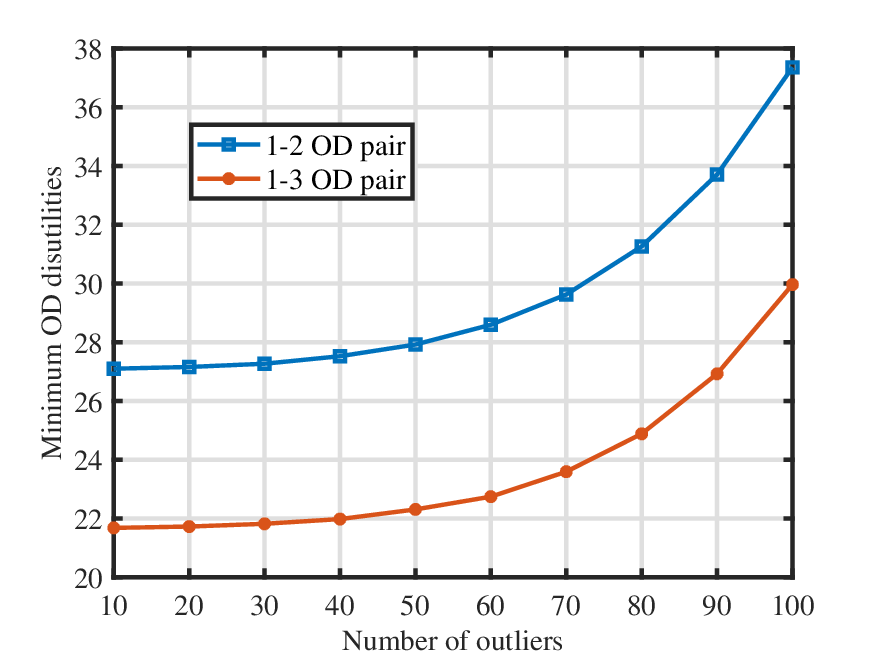}}
  \caption{ (Color online) MLAPUE  path flows and minimum OD disutilities for network $1$ 
  as the number of outliers  varies from $10$ to $100$.
  }
  \label{fig:LAPUEsolu-ODdisu}
\end{figure}
\begin{figure}[H]
  \centering
  \subfloat[]{
     \label{fig:Numbet-OPT}
     \includegraphics[scale=0.4]{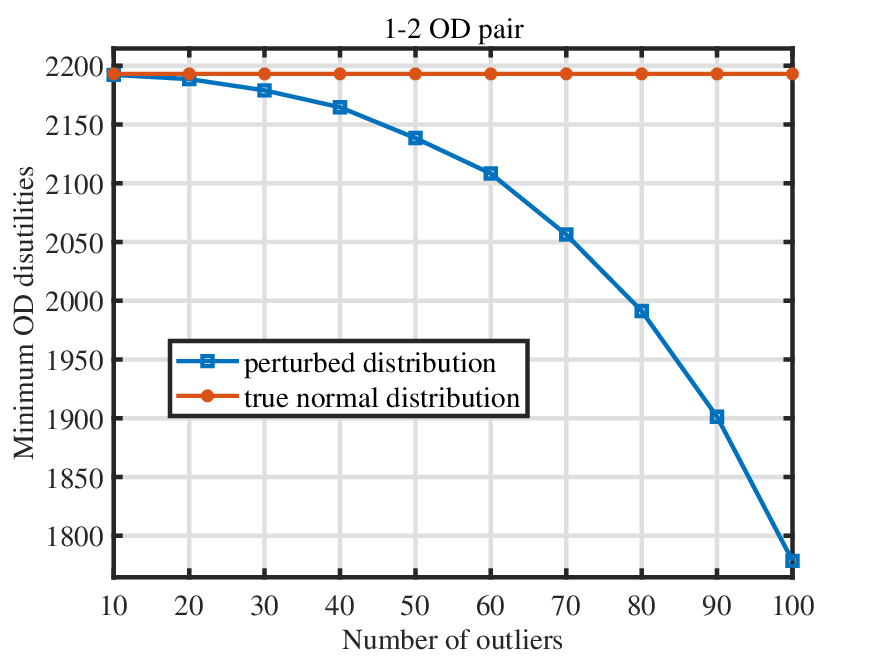}}
  \subfloat[]{
    \label{fig:Numbet-PW}
    \includegraphics[scale=0.4]{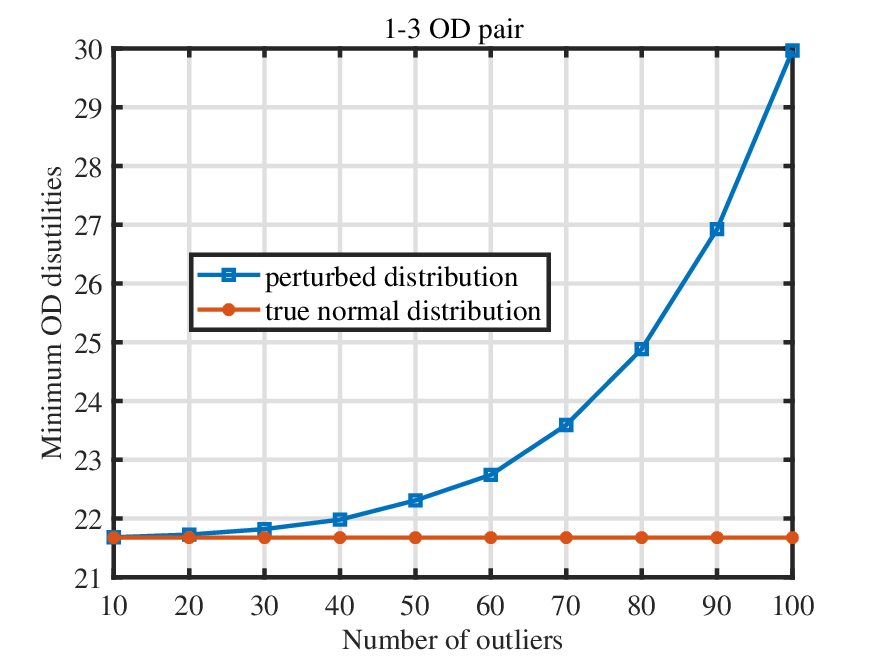}}
  \caption{(Color online) Impact of perturbed distribution of arc travel time for arc $1$ on minimum OD disutilities for network 1.
  }
  \label{fig:ODpairs-shift-pure}
\end{figure}
\begin{figure}[http]
  \centering
  \subfloat[]{
     \label{fig:path1}
     \includegraphics[scale=0.45]{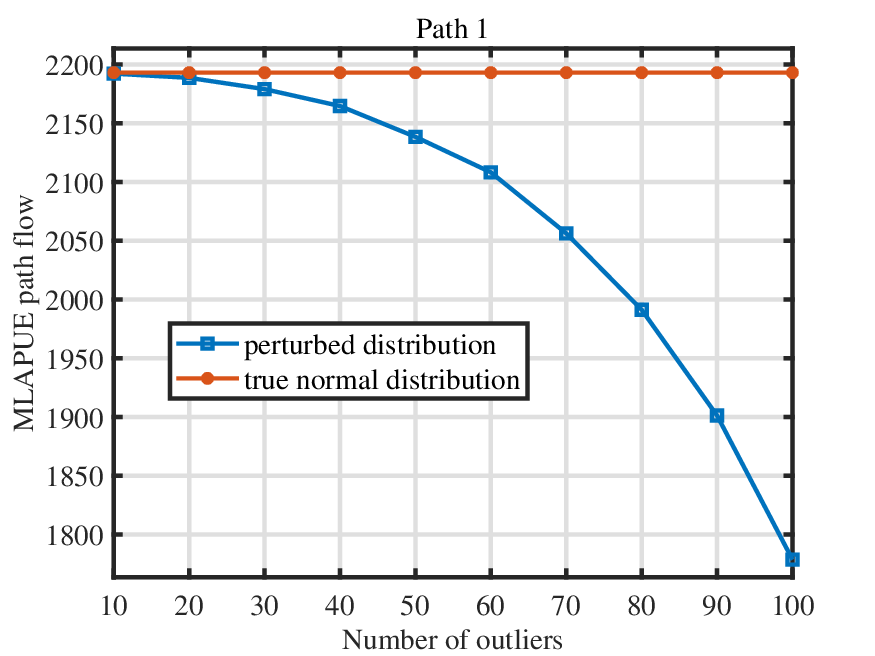}}
     \hfill
  \subfloat[]{
    \label{fig:path2}
    \includegraphics[scale=0.45]{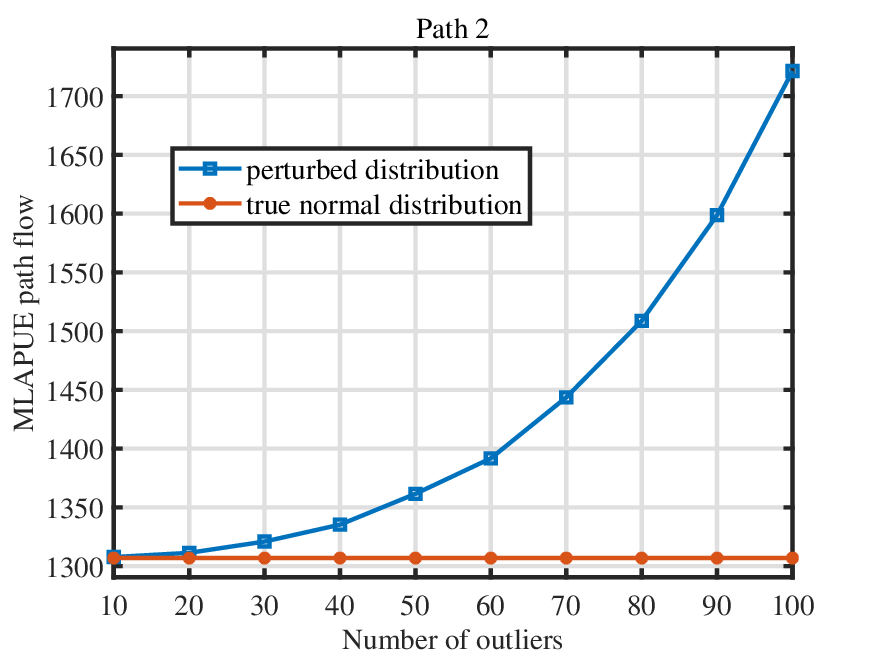}}
    \hfill
 \subfloat[]{
    \label{fig:path3}
    \includegraphics[scale=0.45]{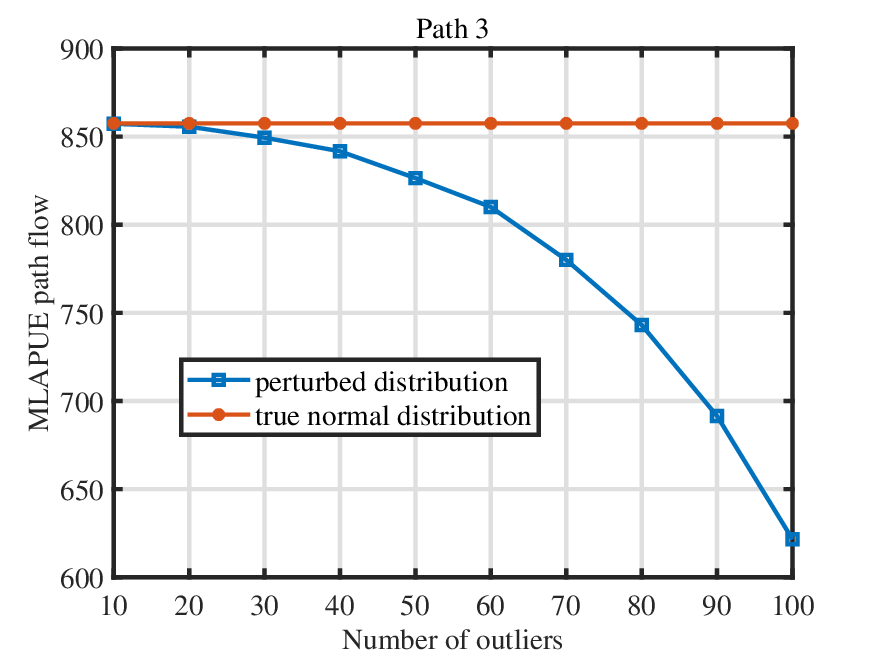}}
    \hfill
 \subfloat[]{
    \label{fig:path4}
    \includegraphics[scale=0.45]{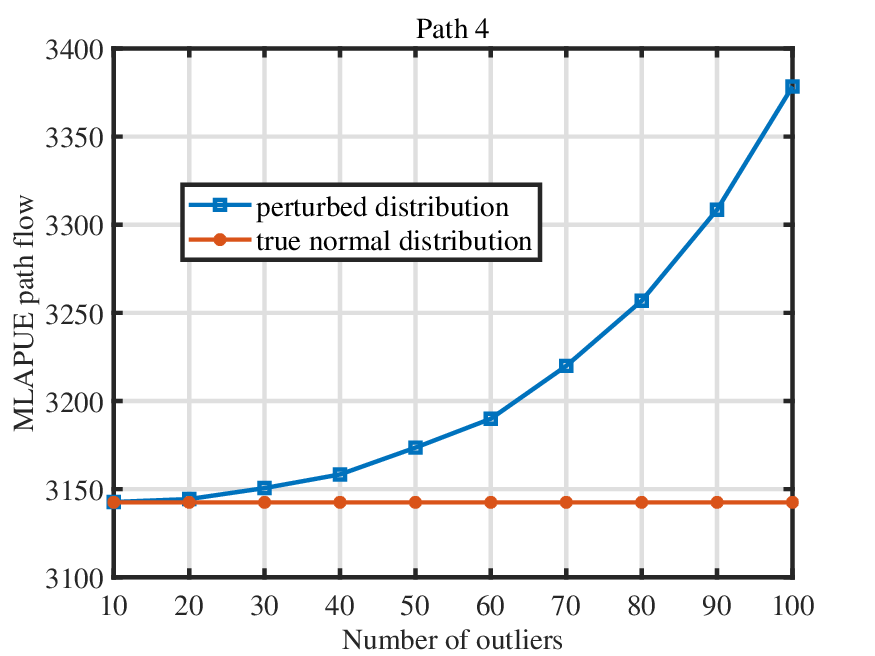}}
  \caption{ (Color online) Impact of perturbed distribution of arc travel time for arc $1$ on MLAPUE solutions for network 1 with parameter $t=0.1$.
  }
  \label{fig:pathFlow-shift-pure}
\end{figure}


\subsection{Example 2: the Nguyen and Dupuis network}

We also consider the Nguyen and Dupuis network shown in Figure \ref{fig:network2}, which includes $13$ nodes, $19$ directed arcs and $4$ OD pairs $1\to2$, $1\to 3$, $4\to 2$ and $4\to 3$. The free-flow arc travel time $t_a^0$ and the mean of arc capacity $\mathbb{E}_P[C_a(\xi)]$ of the network are the same as those used by Yin et al.~\cite[Table 1]{YML09}. The demands of all OD pairs are shown as Figure \ref{table:demand}. 
Note that in this case, the arc-path incidence matrix 
is a $19\times 25$ matrix with column rank $10$. So 
the metric regularity condition is not satisfied and indeed the MLAPUE is not unique.
We record the computational results for traffic assignment pattern $\by{f}_{\inmat{UE}}$, $\by{f}_{\inmat{MLAPUE}}$, $\by{f}_{\inmat{MLAPUE-ptb}}$ and the corresponding minimum OD disutilities in Table \ref{tab:demand-n1}, where $\by{f}_{\inmat{MLAPUE-ptb}}$ is the result for the case that only the distribution of the capacity at arc $a = 2$ is perturbed. From the results, we can see that the flows of the path through arc $2$ under the MLAPUE-ptb pattern either decrease or remains $0$ compared with the flows in the MLAPUE model. 
\begin{figure}[http]
    \centering
    \subfloat[]{
    \label{fig:network2}
    \begin{tikzpicture}[scale=0.9]
\node[circle,
minimum width =3pt,
minimum height =3pt, draw=black] (4) at(0,0){4};
\node[circle,
minimum width =3pt,
minimum height =3pt, draw=black] (5) at(2,0){5};
\node[circle,
minimum width =3pt,
minimum height =3pt, draw=black] (6) at(4,0){6};
\node[circle,
minimum width =3pt,
minimum height =5pt, draw=black] (7) at(6,0){7};
\node[circle,
minimum width =3pt,
minimum height =3pt, draw=black] (8) at(8,0){8};
\node[circle,
minimum width =3pt,
minimum height =3pt, draw=black] (1) at(2,2){1};
\node[circle,
minimum width =3pt,
minimum height =3pt, draw=black] (9) at(2,-2){9};
\node[circle,
minimum width =3pt,
minimum height =3pt, draw=black] (10) at(4,-2){10};
\node[circle,
minimum width =3pt,
minimum height =3pt, draw=black] (12) at(4,2){12};
\node[circle,
minimum width =3pt,
minimum height =3pt, draw=black] (11) at(6,-2){11};
\node[circle,
minimum width =3pt,
minimum height =3pt, draw=black] (2) at(8,-2){2};
\node[circle,
minimum width =3pt,
minimum height =3pt, draw=black] (13) at(4,-4){13};
\node[circle,
minimum width =3pt,
minimum height =3pt, draw=black] (3) at(6,-4){3};
\draw (2,2.7) node [draw=none] {Origin};
\draw (0,0.7) node [draw=none] {Origin};
\draw (8,-2.7) node [draw=none] {Destination};
\draw (6,-4.7) node [draw=none] {Destination};
\draw[arrow] (1) -- node [right] {$1$} (5);
\draw[arrow] (1) -- node [above] {$2$} (12);
\draw[arrow] (4) -- node [above] {$3$} (5);
\draw[arrow] (4) -- node [above] {$4$} (9);
\draw[arrow] (5) -- node [above] {$5$} (6);
\draw[arrow] (5) -- node [right] {$6$} (9);
\draw[arrow] (6) -- node [above] {$7$} (7);
\draw[arrow] (6) -- node [right] {$8$} (10);
\draw[arrow] (7) -- node [above] {$9$} (8);
\draw[arrow] (7) -- node [right] {${10}$} (11);
\draw[arrow] (8) -- node [right] {${11}$} (2);
\draw[arrow] (9) -- node [above] {${12}$} (10);
\draw[arrow] (9) -- node [above] {${13}$} (13);
\draw[arrow] (10) -- node [above] {${14}$} (11);
\draw[arrow] (11) -- node [above] {${15}$} (2);
\draw[arrow] (11) -- node [right] {${16}$} (3);
\draw[arrow] (12) -- node [right] {${17}$} (6);
\draw[arrow] (12) -- node [above] {${18}$} (8);
\draw[arrow] (13) -- node [above] {${18}$} (3);
\end{tikzpicture}
}
\subfloat[]{
\label{table:demand}
\mytab
}
    \caption{Nguyen and Dupuis network and OD pair demands.
}
\label{fig:ND-network}
\end{figure}
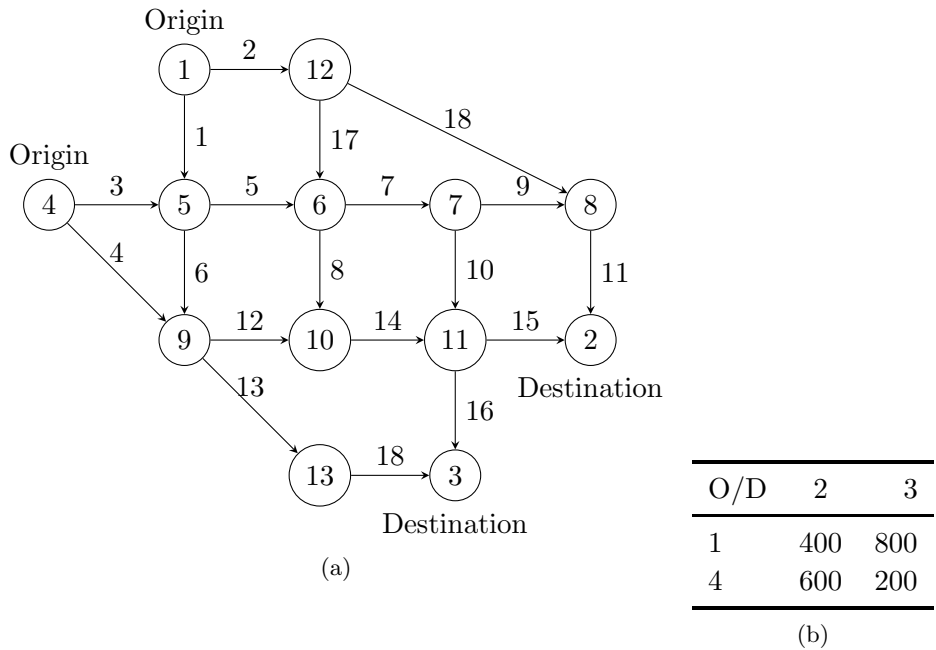

\begin{table}[htbp]
    \centering
    \begin{tabular}{lrrr}
    \toprule
    Path   &  $\by{f}_{\inmat{UE}}$ & $\by{f}_{\inmat{MLAPUE}}$ & $\by{f}_{\inmat{MLAPUE-ptb}}$\\
    \midrule
    $f_1$(2-18-11)          & 140.14   &  206.21 & 181.01\\
    $f_2$(2-17-7-9-11)      & 86.57    &  70.10  & 52.30\\
    $f_3$(2-17-7-10-15)     & 0        &  0      & 
    0\\
    $f_4$(2-17-8-14-15)     & 0.76     &  0      &
    0\\
    $f_5$(1-5-7-9-11)       & 134.22   & 104.54  &
    122.47\\
    $f_6$(1-5-7-10-15)      & 38.31    &  19.15  &
    44.22\\
    $f_7$(1-6-12-14-15)     & 0        &   0     & 
    0\\
    $f_8$(1-5-8-14-15)      &  0       &   0     & 
    0\\
    $f_9$(2-17-7-10-16)     & 89.46    &  127.67 & 92.27\\
    $f_{10}$(2-17-8-14-16)  & 28.85    &   0     & 
    0\\
    $f_{11}$(1-5-7-10-16)   & 134.32   &  157.14 & 154.22\\
    $f_{12}$(1-5-8-14-16)   & 77.96    &  62.14  & 74.59\\
    $f_{13}$(1-6-12-14-16)  & 115.31   &  1.95   & 36.40\\
    $f_{14}$(1-6-13-19)     & 354.10   & 451.10  & 442.52\\
    $f_{15}$(3-5-7-9-11)    & 159.85   & 116.22  & 121.03\\
    $f_{16}$(3-5-7-10-15)   & 65.36    &  44.03  & 54.06\\
    $f_{17}$(3-5-8-14-15)   & 0        &  4.31   & 
    0\\
    $f_{18}$(3-6-12-14-15)  & 40.86    &   0     &   0\\
    $f_{19}$(4-12-14-15)    & 333.93   &  435.44 & 424.91\\
    $f_{20}$(3-5-7-10-16)   & 0        &  0      & 
    0.16\\
    $f_{21}$(3-5-8-14-16)   & 0        &  0      & 
    0.46\\
    $f_{22}$(3-6-12-14-16)  & 0        &  0      &  0.56\\
    $f_{23}$(3-6-13-19)     & 0        &  0      &  0.59\\
    $f_{24}$(4-12-14-16)    & 0        &  0      & 
    0\\
    $f_{25}$(4-13-19)       & 200.00   & 200.00  &  198.05\\
    \midrule
    Minimum OD disutilities &  & &\\
    \midrule
    OD $1\to2$ &  39.52  &  37.21  & 40.90\\
    OD $1\to3$ &  47.84  &  48.41  & 53.56\\
    OD $4\to2$ &  45.48  &  39.68  & 42.35\\
    OD $4\to3$ &  34.38  &  39.90  & 35.80\\
    \bottomrule
    \end{tabular}
    \caption{Optimal path flow and  minimum OD disutilities}
    \label{tab:demand-n1}
\end{table}

\section{Concluding remarks}
\label{sec5:Con}
In this paper, we propose a modification of 
the well-known LAPUE model by adopting a new parameterized penalty function for user's lateness. 
Under the same conditions as for the LAPUE model, we demonstrate existence and uniqueness of LAPUE under the new penalty function and show that it approximates Watling's LAPUE when the parameter is driven to zero.
While the new penalty function offers flexibility to describe a user's preference of riskiness in lateness, 
it has a main advantage that the resulting LAPUE based on perturbed data is statistically robust which means 
that they are not very sensitive to the data perturbation. The result may provide a mathematical framework for 
analysing LAPUE in data driven problems.
As far as we are concerned, this kind of analysis is new in the literature of transportation user equilibrium.
We are unable to show whether the original LAPUE based on the max penalty function for user's lateness is statistically robust when all sample data are potentially perturbed because the error bound in (\ref{ieq:SR-solu}) goes to infinity when $t$ is driven to zero. It does not necessarily mean that the LAPUE is not quantitatively statistically robust, rather it is because our current mathematical framework of analysis does not enable us to establish the result. We leave this interested readers to explore.  

Another important point that we would like to bring readers to attend is that differing from the main stream research in this area, the sample average approximation problem in this paper is based on
the samples of the underlying uncertainty factors 
behind random travel time rather than samples of travel time. This is because travel time depends not only the uncertainty factors but also volume of arc flow/path flow which are decision variables in the model. We will not be able to undertake 
the statistical robustness analysis if we mix up the decision variables and the uncertainty parameters in the empirical data.
To address this discrepancy between the theoretical model (SAA) and the practical availability of the sample data, we may use regression models to derive 
samples of uncertain parameters from samples of travel time via (\ref{func:GBPR}).

\section*{CRediT authorship contribution statement}

$\textbf{Manlan Li}$: Writing- Original draft preparation, Validation, Investigation,  Writing-Reviewing and Editing.
$\textbf{Huifu Xu}$: Supervision, Methodology, Writing - Original Draft, Writing-Reviewing and Editing, Supervision, Funding acquisition.

\section*{Declaration of Competing Interest}
The authors declare no conflict of interest regarding the publication of this paper.

\section*{Acknowledgments}
This work is supported by 
GRC Grant (14204821) and 
CUHK start-up grant.

\newpage

\bibliographystyle{unsrt}
\bibliography{reference}

\appendix

\section*{Appendix}
\label{Append}

\section{Perliminaries}
\begin{definition}[Outer limit \cite{RWVa09}]
\label{def:outerlim}
    For a sequence $\by{\mathcal{F}}_t$ of subsets of
$\R^N$, the outer limit is the set
\bgeqn
\limsup_{t\downarrow 0}\by{\mathcal{F}}_t:=\left\{\by{f}\left|\exists t_k\to 0,\; \exists \by{f}_{t_k}\in \by{\mathcal{F}}_{t_k}\; \inmat{with} \; \by{f}_{t_k}\to\by{f}\right.\right\}.
\edeqn
\end{definition}

\begin{definition}[Proto-derivative \cite{Rockafellar89}]
\label{def:proto-d}
A set-valued mapping $\Gamma:\R^N\rightrightarrows\R^N$ is proto-differentiable at a point $z$ and for a particular element $u\in\Gamma(z)$ if the set-valued mappings
\bgeq
\Delta_{z,u,\varepsilon}:d\to\frac{\Gamma(z+\varepsilon d)-u}{\varepsilon}
\edeq
\end{definition}
regarded as a family indexed by $\varepsilon>0$, graph-converge as $\varepsilon\downarrow 0$. The limit, if exists, is denoted by $\Gamma'_{z,u}$ and called the {\it proto-derivative} of $\Gamma$ at $z$ for $u$. 

In the case that the proto-derivative exists, 
\bgeq
\Gamma'_{z,u}(d)=\limsup_{\varepsilon\downarrow 0,d'\to d}\frac{\Gamma(z+\varepsilon d')-u}{\varepsilon}
\edeq
for every $d\in\R^N$ via \cite[Proposition 2.3]{Rockafellar89}.
The graphical outer limit of the set-valued mappings 
$
\{\Delta_{z,u,\varepsilon(\cdot):\varepsilon>0}\}$ 
is called {\it graphical derivative}, denoted by $D\Gamma(z|u)$, this is, 
\bgeq
\inmat{ghp}D\Gamma(z|u):=\limsup_{\varepsilon\downarrow 0}{\inmat{ghp}\Delta_{z,u,\varepsilon}}=\limsup_{\varepsilon\downarrow 0}{\frac{\inmat{ghp}\Gamma -(z,u)}{\varepsilon}}.
\edeq
The graphical derivative always exists and its graph is the tangent cone of $\inmat{gph}\Gamma$ at point $(z,u)$. In the case the proto-derivative exists, the proto-derivative and the graphical derivative are equal.

\begin{definition}[Metric regularity]
\label{def:MR}
Let $\Upsilon:\R^N\rightrightarrows\R^N$ be a closed set-valued mapping.
For $\bar{\by{f}},\bar{\by{g}}\in\R^N$, $\Upsilon$ is said to be metrically regular at $\bar{\by{f}}$ for $\bar{\by{g}}$ if there exist a
constant $\alpha> 0$, neighborhoods $\mathcal{V}_{\bar{\by{f}}}$ and $\mathcal{V}_{\bar{\by{g}}}$ of $\bar{\by{f}}$ and  $\bar{\by{g}}$ such that
$$
d(\by{f},\Upsilon^{-1}(\by{g}))\leq\alpha d(\by{g},\Upsilon(\by{f})), \;\;\forall \by{f}\in\mathcal{V}_{\bar{\by{f}}}, \by{g}\in\mathcal{V}_{\bar{\by{g}}}.
$$
Here the inverse mapping is defined as $\Upsilon^{-1}(\by{g}):=\{\by{f}:\by{g}\in\Upsilon(\by{f})\}$. $\Upsilon$ is said to be strongly metrically regular at $\bar{\by{f}}$ for $\bar{\by{g}}$ if it is metrically regular
and there exist neighborhoods $\mathcal{V}_{\bar{\by{f}}}$ and $\mathcal{V}_{\bar{\by{g}}}$ such that for  $\by{g}\in\mathcal{V}_{\bar{\by{g}}}$ there is only one
 $\by{f}\in \mathcal{V}_{\bar{\by{f}}} \cap \Upsilon^{-1}(\by{g})$.
\end{definition}

\begin{definition}[Fortet-Mourier metric] Let 
\begin{equation}
\label{dis-FM}
\mathcal{F}_p(\R^k):=\left\{\phi:\R^k\to\R:|\phi(\xi)-\phi(\tilde{\xi})|\leq L_p(\xi,\tilde{\xi})\|\xi-\tilde{\xi}\|, \forall \xi,\tilde{\xi}\in\R^k\right\}
\end{equation}
where $L_p(\xi,\tilde{\xi}):=\max\{1,\|\xi\|,\|\tilde{\xi}\|\}^{p-1}$ for $p\geq 1$. The $p$th order Fortet-Mourier metric over $\mathscr{P}(\R^k)$ is defined by
\begin{equation}
\label{dist-FM}
    \dd_{\rm FM}(P,Q):=\sup_{\phi\in\mathcal{F}_p(\R^k)}\left|\int_{\R^k}\phi(\xi)P(d\xi)-\int_{\R^k}\phi(\xi)Q(d\xi)\right|.
    \end{equation}
\end{definition}
It is well-known that the Fortet-Mourier distance metricizes weak convergence on $\mathcal{M}^p_k$, see \cite[Theorem 6.2.1]{Rachev91}. In the case when $p=1$, it reduces to the {\em Kantorovich metric}
\begin{equation}
\label{dist-Kan}
    \dd_{\rm K}(P,Q):=\sup_{\phi\in\mathcal{F}_1(\R^k)}\left|\int_{\R^k}\phi(\xi)P(d\xi)-\int_{\R^k}\phi(\xi)Q(d\xi)\right|,
    \end{equation}
see also \cite{gibbs2002choosing}.

Let $\mathbb{F}_t$ be the set defined as in Assumption \ref{asm:compact-MSVIP}, $\tilde{\mathscr{G}}:=\{\by{g}(\cdot)=\hat{\by{u}}(\by{f},\by{C}(\by{f},\cdot),t):\by{f}\in\mathbb{F}_t,t>0\}$. For $P,Q\in\hat{\mathscr{P}}$, define
\bgeqn
\label{psm-u}
\mathscr{D}(P,Q):=\sup_{\by{g}\in\mathscr{G}}\|\mathbb{E}_P[\by{g}(\xi)]-\mathbb{E}_Q[\by{g}(\xi)]\|.
\edeqn
According to Proposition~\ref{prop:property-of-u}~(i), $\mathscr{D}(P,Q)$ is well-defined for any $P,Q\in\hat{\mathscr{P}}$. Then 
 $\mathscr{D}(P,Q)$
is a kind of pseudo-metric in the space of probability measures $\hat{\mathscr{P}}$ in the sense that
$\mathscr{D}(P,Q)=0$ implies $\mathbb{E}_P[\by{g}(\xi)]-\mathbb{E}_Q[\by{g}(\xi)]$ for all $\by{g}\in\mathscr{G}$ but not necessarily
$Q = P$. Note that this definition is slightly different from the classical definition of
pseudo-metric in the literature \cite{Romisch03} where $\by{g}$ is often restricted to real-valued functions. 



\begin{lemma}
\label{lem:LIp-solu}
Let $V$ be an open neighborhood of $z_0$ and consider the Banach space $U:=C^1(V,W)$ of continuously differentiable mapping $\psi:V\to W$ having finite norm
\bgeqn
\label{def:norm-psi}
\|\psi\|_{1,V}:=\sup_{z\in V}\|\psi(z)\|+\sup_{z\in V}\|D\psi(z)\|.
\edeqn
If $z_0$ is a strongly regular solution of the abstract generalized equation (\ref{eq:abstract_GE-0}),
then for all $\tilde{\phi}$ in a neighborhood of $\phi$ w.r.t. the norm $\|\cdot\|_1$,
the generalized equation 
$$
0\in \tilde{\phi}(z)+N(z)
$$
has a Lipschitz continuous and unique solution $\bar{z}(\tilde{\phi})$ in a neighborhood of $z_0$.
\end{lemma}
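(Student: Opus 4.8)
The plan is to recognise this as Robinson's implicit-function theorem for generalised equations and to prove it by a Banach fixed-point argument built around the single-valued Lipschitz localisation supplied by strong regularity. By hypothesis $z_0$ solves $0\in\phi(z)+{\cal N}(z)$, and strong regularity provides neighbourhoods ${\cal V}_Z$ of $z_0$ and ${\cal V}_W$ of $0$ together with a map $\zeta:{\cal V}_W\to{\cal V}_Z$, Lipschitz with constant $c$, such that $\zeta(\delta)$ is the unique solution in ${\cal V}_Z$ of the linearised inclusion $\delta\in\phi(z_0)+D\phi(z_0)(z-z_0)+{\cal N}(z)$; in particular $\zeta(0)=z_0$. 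Writing $L(z):=\phi(z_0)+D\phi(z_0)(z-z_0)+{\cal N}(z)$ and $R(z):=\phi(z_0)+D\phi(z_0)(z-z_0)-\tilde\phi(z)$, and adding the common affine term to both sides of the inclusion, the perturbed problem $0\in\tilde\phi(z)+{\cal N}(z)$ becomes equivalent to $R(z)\in L(z)$. Hence, as long as $R(z)\in{\cal V}_W$, solving the perturbed problem near $z_0$ is the same as solving the fixed-point equation $z=\Phi(z):=\zeta(R(z))$.

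The first substantive step is the contraction estimate. For $z_1,z_2$ in a small ball $B:=\bar B(z_0,\rho)\subset{\cal V}_Z$ I would write
$$R(z_1)-R(z_2)=\big[D\phi(z_0)(z_1-z_2)-(\phi(z_1)-\phi(z_2))\big]+\big[(\phi(z_1)-\phi(z_2))-(\tilde\phi(z_1)-\tilde\phi(z_2))\big].$$
Using the integral form of the mean value theorem, the first bracket is bounded by $\big(\sup_{z\in B}\|D\phi(z)-D\phi(z_0)\|\big)\|z_1-z_2\|$, which continuity of $D\phi$ makes as small as desired by shrinking $\rho$; the second bracket is bounded by $\big(\sup_{z\in B}\|D\phi(z)-D\tilde\phi(z)\|\big)\|z_1-z_2\|\le\|\phi-\tilde\phi\|_{1,V}\|z_1-z_2\|$, which is precisely where the $C^1$ norm (\ref{def:norm-psi}) is essential. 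Multiplying by the Lipschitz constant $c$ of $\zeta$ gives $\|\Phi(z_1)-\Phi(z_2)\|\le c(\epsilon_\rho+\|\phi-\tilde\phi\|_{1,V})\|z_1-z_2\|$, so fixing $\rho$ with $c\,\epsilon_\rho\le 1/4$ and then restricting $\tilde\phi$ to the neighbourhood $\{\tilde\phi:c\|\phi-\tilde\phi\|_{1,V}\le 1/4\}$ yields a contraction with modulus $\le1/2$.

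Next I would verify the self-mapping and domain conditions so that the contraction mapping principle applies on $B$. Since $R(z_0)=\phi(z_0)-\tilde\phi(z_0)$ and $\zeta(0)=z_0$, we get $\|\Phi(z_0)-z_0\|=\|\zeta(R(z_0))-\zeta(0)\|\le c\|\phi-\tilde\phi\|_{1,V}$, whence $\|\Phi(z)-z_0\|\le\tfrac12\rho+c\|\phi-\tilde\phi\|_{1,V}\le\rho$ after possibly shrinking the neighbourhood of $\phi$; the same smallness, together with the Lipschitz bound on $R$, keeps $R(z)\in{\cal V}_W$ so that $\Phi$ is well-defined on $B$. The fixed-point theorem then produces a unique $\bar z(\tilde\phi)\in B$, which by the equivalence above is the unique solution of $0\in\tilde\phi(z)+{\cal N}(z)$ in this neighbourhood of $z_0$. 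Lipschitz dependence on the data follows from the standard two-perturbation comparison: for $\tilde\phi_1,\tilde\phi_2$ with fixed points $\bar z_1,\bar z_2$, writing $\bar z_i=\zeta(R_i(\bar z_i))$ and using $R_1(\bar z_2)-R_2(\bar z_2)=\tilde\phi_2(\bar z_2)-\tilde\phi_1(\bar z_2)$ gives $\|\bar z_1-\bar z_2\|\le\tfrac12\|\bar z_1-\bar z_2\|+c\|\tilde\phi_1-\tilde\phi_2\|_{1,V}$, hence $\|\bar z_1-\bar z_2\|\le 2c\|\tilde\phi_1-\tilde\phi_2\|_{1,V}$.

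I expect the main obstacle to be the bookkeeping that makes the fixed-point reformulation legitimate, rather than the contraction estimate itself: one must choose the radius $\rho$ and the size of the $C^1$-neighbourhood of $\phi$ in the right order so that (i) $R$ maps $B$ into the domain ${\cal V}_W$ of $\zeta$, (ii) the images $\Phi(B)$ stay inside ${\cal V}_Z$ where the localisation $\zeta$ is genuinely single-valued, and (iii) the contraction and self-map constants are controlled simultaneously. The single-valuedness in (ii) is exactly what distinguishes strong regularity from plain metric regularity, and it is what upgrades a mere solution estimate into the existence of a Lipschitz single-valued solution branch $\bar z(\cdot)$.
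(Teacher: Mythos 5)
Your proof is correct, but note that the paper itself offers no proof of this lemma: it is stated in the appendix preliminaries as a known result, namely Robinson's implicit-function theorem for strongly regular generalized equations (the paper leans on Robinson's work and on the Shapiro reference it cites for Theorem 2.3, where this same machinery appears). What you have written is precisely the canonical Robinson argument that the paper implicitly relies on. Your key steps all check out: the equivalence $R(z)\in L(z)\iff 0\in\tilde\phi(z)+\mathcal{N}(z)$ follows by cancelling the common affine term; $\zeta(0)=z_0$ follows from uniqueness of the linearized solution in $\mathcal{V}_Z$; the contraction estimate correctly splits $R(z_1)-R(z_2)$ into a linearization error, controlled by continuity of $D\phi$ on a small ball, and a perturbation term controlled by $\sup_{z\in V}\|D\phi(z)-D\tilde\phi(z)\|\le\|\phi-\tilde\phi\|_{1,V}$, which is exactly where the $C^1$ norm \eqref{def:norm-psi} (rather than a sup norm on function values alone) is indispensable; the self-map, domain ($R(B)\subset\mathcal{V}_W$), and single-valuedness bookkeeping is handled in the right order; and the two-perturbation comparison yields the Lipschitz bound $\|\bar z_1-\bar z_2\|\le 2c\,\|\tilde\phi_1-\tilde\phi_2\|_{1,V}$, which is the Lipschitz continuity asserted in the lemma. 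In short, you have supplied a complete and correct proof of a statement the paper only quotes; the one presentational improvement would be to say explicitly that the closed ball $B$ is chosen inside $V\cap\mathcal{V}_Z$ so that both the $C^1$ bound and the localization $\zeta$ are applicable on it.
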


\section{Proof of section \ref{sec2:MLPAUE}}
\subsection{Proof of Proposition \ref{prop:property-of-u}}
\begin{proof}
Part (i). Since for any $t>0$, $\hat{\by{u}}(\by{f},\by{C}(\by{f},\xi),t)$ is continuously differentiable w.r.t. $\by{C}$, which implies that $\hat{\by{u}}(\cdot,\by{C}(\cdot,\xi),t)$ is continuously differentiable w.r.t. $\by{f}$ via Assumption \ref{asm:T-value} and $\by{C}(\by{f},\xi)=\Delta\by{T}(\Delta\by{f},\xi)$.

  Part (ii). By Assumption \ref{asm:T-value}(b), we have 
    $$
    \sup_{\by{f}\in\mathbb{F}}\|\by{C}(\by{f},\xi)\|=\sup_{\by{v}\in\mathbb{V}_t}\|\Delta^\top\by{T}(\by{v},\xi)\|\leq  \eta_{\Delta^\top}\sup_{\by{v}\in\mathbb{V}}\|\by{T}(\by{v},\xi)\|\leq  \eta_{\Delta^\top}\Phi_1(\xi).
    $$
    Moreover, 
    \bgeq
    \sup_{\by{f}\in\mathbb{F}_t}\|\hat{\by{u}}(\by{f},\by{C}(\by{f},\xi),t)\|
    &=&\sup_{\by{f}\in\mathbb{F}_t}\|\theta_0\by{d}+\theta_1\by{C}(\by{f},\xi)+\theta_2\hat{\by{h}}(\by{f},\by{C}(\by{f},\xi),t)\|\\ 
    &\leq&\theta_0\|\by{d}\|+\sup_{\by{f}\in\mathbb{F}_t}\|\theta_1\by{C}(\by{f},\xi)\|+\sup_{\by{f}\in\mathbb{F}_t}\theta_2\|\hat{\by{h}}(\by{f},\by{C}(\by{f},\xi),t)\|\\
    &\leq&\theta_0\|\by{d}\|+\theta_1 \eta_{\Delta^\top}\Phi_1(\xi)+\theta_2\sup_{\by{f}\in\mathbb{F}_t}\|\by{C}(\by{f},\xi)-\by{\tau}+\by{t}\|,\\
    &\leq&\eta_{\Delta^\top}(\theta_1+\theta_2)\Phi_1(\xi)+\theta_0\|\by{d}\|+\theta_2\|\by{\tau}\|+t,
    \edeq
    Let $\tilde{\Phi}_1(\xi,t)=\eta_{\Delta^\top}(\theta_1+\theta_2)\Phi_1(\xi)+\theta_0\|\by{d}\|+\theta_2\|\by{\tau}\|+t$. Since $\mathbb{E}_P[\Phi_1(\xi)]<\infty$, then we can easily obtain $\mathbb{E}_P[\tilde{\Psi}_1(\xi,t)]<\infty$ for any fixed $t>0$.

    Part (iii).
    Since
    \bgeq
    &&\|\hat{\by{u}}(\by{f},\by{C}(\by{f},\xi),t)-\hat{\by{u}}(\by{f}',\by{C}(\by{f}',\xi),t)\|\\
    &=&\|\theta_1(\by{C}(\by{f},\xi)-\by{C}(\by{f}',\xi))+\hat{\by{h}}(\by{f},\by{C}(\by{f},\xi),t)-\hat{\by{h}}(\by{f}',\by{C}(\by{f}',\xi),t)\|\\
    &\leq&(\theta_1+\theta_2)\|\Delta^\top\|\|\by{T}(\by{v},\xi)-\by{T}(\by{v}',\xi)\|\\
    &\leq&(\theta_1+\theta_2 \eta_{\Delta^\top}\Phi_2(\xi)\|\Delta\|\|\by{f}-\by{f}'\|\\
    &\leq&\eta_{\Delta^\top}\eta_{\Delta}(\theta_1+\theta_2)\Phi_2(\xi)\|\by{f}-\by{f}'\|, \quad\forall\by{f},\by{f}'\in\mathbb{F}_t,t>0.
    \edeq 
    The first inequality holds is due to the fact that each component of $\hat{\by{h}}(\by{f},\by{C}(\by{f},\xi),t)$ is Lipschitz continuous in $\by{C}(\by{f},\xi)$ with modulus $\theta_2$ for any $t>0$ and $\by{C}(\by{f},\xi)=\Delta^\top\by{T}(\by{v},\xi)$.
    The proof is completed. \hfill $\Box$
\end{proof}
\subsection{Proof of Theorem \ref{thm:smth-approx}}
\begin{proof} 
Observe firs that by Proposition \ref{coy:exist-solu-MSVIP}, 
$\by{\mathcal{F}}_t(P)\neq \emptyset$. 


Part (i). Let $\by{f}_t(P)\in\by{\mathcal{F}}_t(P)$. Since $\by{\mathcal{F}}_t(P)$ is bounded, we show that any 
cluster point of the sequence 
is a solution of \eqref{eq:SVI-LAPUE}.
By taking a subsequence if necessary, we may assume for the simplicity of notation that 
 \bgeqn
 \lim_{t\downarrow 0}\by{f}_t(P)=\by{f}(P).
 \edeqn
    Since $\hat{\by{u}}(\by{f}_t(P),\by{C}(\by{f}_t(P),\xi),t)
    $
    is a approximation of $\by{u}(\by{f}(P),\by{C}(\by{f}(P),\xi))$,
    we have by the continuity of $\by{C}$ w.r.t. $\by{f}$,   
    \bgeqn
    \lim_{{t\downarrow 0}}\hat{\by{u}}(\by{f}_t(P),\by{C}(\by{f}_t(P),\xi),t)=\by{u}(\by{f}(P),\by{C}(\by{f}(P),\xi)).
    \edeqn
    Moreover, by Proposition \ref{prop:property-of-u},  
    $\hat{\by{u}}(\by{f}_t(P),\by{C}(\by{f}_t(P),\xi),t)$ is integrally bounded.
    Consequently, we have by Lebesgue's dominated  convergence theorem 
   \bgeq
   \lim_{{t\downarrow 0}}\hat{\phi}_t(\by{f}_t(P),P)&=&\lim_{{t\downarrow 0}}\mathbb{E}_P\left[\hat{\by{u}}(\by{f}_t(P),\by{C}(\by{f}_t(P),\xi),t)\right] \\
   &=&
    \mathbb{E}_P\left[\lim_{{t\downarrow 0}}\hat{\by{u}}(\by{f}_t(P),\by{C}(\by{f}_t(P),\xi),t)\right]\\
    &=&\mathbb{E}_P[\by{u}(\by{f}(P),\by{C}(\by{f}(P),\xi))]=\phi(\by{f}(P),P).
    \edeq
    The conclusion follows immediately from this and the fact that the normal cone is upper semi-continuous. 
    
    Part (ii). Let $\by{f}(P)\in\by{\mathcal{F}}(P)$. It is easy to prove that there exist a sufficiently small constant $t_0>0$ and closed neighborhood $\mathcal{V}_{\by{f}(P)}$ of $\by{f}(P)$ such that $\{\by{f}_t(P)\in\by{\mathcal{F}}_t(P):
    \lim_{t\downarrow 0} \by{f}_t(P) = \by{f}(P), t\in(0,t_0]\}\subseteq\mathcal{V}_{\by{f}(P)}$ by virtue of (i). Let $\mathcal{B}$ be a unit ball in $\R^N$ and $s$ be a constant such that 
    $$
    s>\max_{\by{f}\in\mathcal{V}_{\by{f}(P)}\cap D,t\in(0,t_0]}\{\|\hat{\phi}_t(\by{f},P)\|,\|\phi(\by{f},P)\|\}.
    $$
    Then for any $\by{f}\in\mathcal{V}_{\by{f}(P)}\cap D$,
    $$
    0\in\phi(\by{f},P)+\mathcal{N}_D(\by{f})\cap s\mathcal{B},
    $$
    where $\mathcal{B}$ is a unit ball in  $\R^N$.
        Likewise, for $\by{f}\in\{\by{f}_t(P):t\in(0,t_0]\}$,
    \bgeqn
    \label{eq:ft}
0\in\hat{\phi}_t(\by{f},P)+\mathcal{N}_D(\by{f})\cap s\mathcal{B}.
    \edeqn
    On the other hand, the metric regularity of 
     $\Upsilon(\by{f})$ at $\by{f}(P)$ for $0$ with regularity modulus $\alpha$ 
     implies that there exists a neighborhood $\mathcal{V}_{\by{f}(P)}$ of $\by{f}(P)$ 
     such that 
     \bgeqn
     d(\by{f}_t(P),\by{\mathcal{F}}(P))\leq\alpha d(0,\Upsilon(\by{f}_t(P)))
     \edeqn
     for all $\by{f}_t(P)\in\by{\mathcal{F}}_t(P)\cap\mathcal{V}_{\by{f}(P)}$ and $t\in(0,t_0]$. 
     Since
     $$
     \Upsilon(\by{f})=\phi(\by{f},P)+\mathcal{N}_D(\by{f})\supset\phi(\by{f},P)+\mathcal{N}_D(\by{f})\cap s\mathcal{B},
     $$
     then
     $$
     d(0,\Upsilon(\by{f}))\leq d (0,\phi(\by{f},P)+\mathcal{N}_D(\by{f})\cap s\mathcal{B}).
     $$
     Moreover, since $\by{f}_t(P)$ satisfies \eqref{eq:ft}, then
     \bgeqn
      d(\by{f}_t(P),\by{\mathcal{F}}(P))&\leq&\alpha d (0,\phi(\by{f}_t(P),P)+\mathcal{N}_D(\by{f}_t(P))\cap s\mathcal{B})\nonumber\\
     &\leq&\alpha \mathbb{D}(\hat{\phi}_t(\by{f}_t(P),P)+\mathcal{N}_D(\by{f}_t(P))\cap s\mathcal{B},\phi(\by{f}_t(P),P)+\mathcal{N}_D(\by{f}_t(P))\cap s\mathcal{B})\nonumber\\
     &\leq&\alpha\|\hat{\phi}_t(\by{f}_t(P),P)-\phi(\by{f}_t(P),P)\|\nonumber\\
     &\leq&\alpha\sup_{\by{f}\in\mathcal{V}_{\by{f}(P)}}\|\hat{\phi}_t(\by{f},P)-\phi(\by{f},P)\| \nonumber
     \edeqn
     for all$\by{f}_t(P)\in\by{\mathcal{F}}_t(P)\cap\mathcal{V}_{\by{f}(P)}$ and $t\in(0,t_0]$. Therefore, we obtain \eqref{thm3.1-equ2}. Inequality \eqref{thm3.1-equ3} follows straight-forwardly from \eqref{thm3.1-equ2} and strong mertic regularity.
    \hfill $\Box$
\end{proof}

\section{Proof of Section \ref{sec3:SR-MLAPUE}}
\subsection{Proof of Proposition \ref{prop:IF}}
\begin{proposition}
\label{prop:IF}
    For fixed $t$, let $\by{f}_t^*$ be a 
 solution of \eqref{eq:SVI-LAPUE-smooting} and 
     $\hat{\by{u}}^*_t=-\mathbb{E}_P[\hat{\by{u}}(\by{f}_t^*,\by{C}
    (\by{f}_t^*,\xi),t)]$.  
    Then the generalized influence function $\inmat{GIF}(\tilde{\xi},\by{f}_t(\cdot),P)$ exists and has 
    the following formula:
    \begin{equation}
    \label{expression:IF}
    \begin{split}
    \inmat{GIF}(\tilde{\xi};\by{\mathcal{F}}_t(\cdot),P,\by{f}_t^*)=\{\by{f}\in\R^N|&-\hat{\by{u}}(\by{f}_t^*,\by{C}(\by{f}_t^*,\tilde{\xi}),t)+\hat{\phi}_t(\by{f}_t^*,P)\\
    &-\mathbb{E}_P[ \nabla_{\by{f}}\hat{\by{u}}(\by{f}_t^*,\by{C}(\by{f}_t^*,\xi),t)]\by{f}
    \in\mathcal{N}_{\by{f}_t^*,\hat{\by{u}}^*_t}(\by{f})\}.
    \end{split}
    \end{equation}
\end{proposition}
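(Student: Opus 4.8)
The plan is to read the single-data-perturbed problem \eqref{eq:GE-purt-single} as a parametric generalized equation in the scalar $\varepsilon$ and to obtain $\inmat{GIF}(\tilde{\xi};\by{\mathcal{F}}_t(\cdot),P,\by{f}_t^*)$ by linearising this equation around $(\by{f}_t^*,0)$. Recall that $\by{f}\in\by{\tilde{\mathcal{F}}}_{\tilde{\xi},P,t}(\varepsilon)$ is equivalent to $-\Psi_{\tilde{\xi},P,t}(\by{f},\varepsilon)\in\mathcal{N}_D(\by{f})$, i.e. $(\by{f},-\Psi_{\tilde{\xi},P,t}(\by{f},\varepsilon))\in\gph\mathcal{N}_D$, and that at $\varepsilon=0$ this holds at $(\by{f}_t^*,\hat{\by{u}}_t^*)$ because $-\Psi_{\tilde{\xi},P,t}(\by{f}_t^*,0)=-\hat{\phi}_t(\by{f}_t^*,P)=\hat{\by{u}}_t^*$.

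First I would record the two structural ingredients. Smoothness of the data: by definition $\Psi_{\tilde{\xi},P,t}(\by{f},\varepsilon)=\hat{\phi}_t(\by{f},P)+\varepsilon\big(\hat{\by{u}}(\by{f},\by{C}(\by{f},\tilde{\xi}),t)-\hat{\phi}_t(\by{f},P)\big)$ is affine in $\varepsilon$ and, by Proposition \ref{prop:property-of-u}(i) together with the differentiation-under-the-expectation identity $\nabla_{\by{f}}\hat{\phi}_t(\by{f},P)=\mathbb{E}_P[\nabla_{\by{f}}\hat{\by{u}}(\by{f},\by{C}(\by{f},\xi),t)]$ noted after that proposition, it is continuously differentiable in $\by{f}$; its partial derivatives at $(\by{f}_t^*,0)$ are $\partial_\varepsilon\Psi_{\tilde{\xi},P,t}(\by{f}_t^*,0)=\hat{\by{u}}(\by{f}_t^*,\by{C}(\by{f}_t^*,\tilde{\xi}),t)-\hat{\phi}_t(\by{f}_t^*,P)$ and $\nabla_{\by{f}}\Psi_{\tilde{\xi},P,t}(\by{f}_t^*,0)=\mathbb{E}_P[\nabla_{\by{f}}\hat{\by{u}}(\by{f}_t^*,\by{C}(\by{f}_t^*,\xi),t)]$. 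Proto-differentiability of the constraint: since $D$ in \eqref{FS-path} is a polyhedral convex set, the normal-cone mapping $\mathcal{N}_D$ is proto-differentiable in the sense of Definition \ref{def:proto-d} at every point of its graph, the proto-derivative $(\mathcal{N}_D)'_{\by{f}_t^*,\hat{\by{u}}_t^*}$ coinciding with the graphical derivative and being computable from the tangent cone of $\gph\mathcal{N}_D$ exactly as in the Example above.

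Next I would perform the linearisation. For a trial arc $\by{f}_\varepsilon=\by{f}_t^*+\varepsilon\by{\eta}+o(\varepsilon)$ a first-order Taylor expansion gives $-\Psi_{\tilde{\xi},P,t}(\by{f}_\varepsilon,\varepsilon)=\hat{\by{u}}_t^*-\varepsilon\big[\partial_\varepsilon\Psi_{\tilde{\xi},P,t}(\by{f}_t^*,0)+\nabla_{\by{f}}\Psi_{\tilde{\xi},P,t}(\by{f}_t^*,0)\by{\eta}\big]+o(\varepsilon)$. By the definition of the proto-derivative of $\mathcal{N}_D$ at $(\by{f}_t^*,\hat{\by{u}}_t^*)$, such an arc remains in $\gph\mathcal{N}_D$ to first order precisely when $-\big[\partial_\varepsilon\Psi_{\tilde{\xi},P,t}(\by{f}_t^*,0)+\nabla_{\by{f}}\Psi_{\tilde{\xi},P,t}(\by{f}_t^*,0)\by{\eta}\big]\in(\mathcal{N}_D)'_{\by{f}_t^*,\hat{\by{u}}_t^*}(\by{\eta})$. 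The solution map $\varepsilon\mapsto\by{\tilde{\mathcal{F}}}_{\tilde{\xi},P,t}(\varepsilon)$ is the parametric inverse image of $\gph\mathcal{N}_D$ under the $C^1$ mapping $(\varepsilon,\by{f})\mapsto(\by{f},-\Psi_{\tilde{\xi},P,t}(\by{f},\varepsilon))$, so the chain rule for proto-derivatives (Rockafellar \cite{Rockafellar89}) transfers proto-differentiability from $\mathcal{N}_D$ to it. Hence the limit defining $\inmat{GIF}(\tilde{\xi};\by{\mathcal{F}}_t(\cdot),P,\by{f}_t^*)$ exists and equals the set of $\by{\eta}$ obeying the displayed inclusion; substituting the two partial derivatives and renaming $\by{\eta}$ by $\by{f}$ delivers \eqref{expression:IF}.

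The step I expect to be the main obstacle is the rigorous transfer of proto-differentiability from $\mathcal{N}_D$ to $\by{\tilde{\mathcal{F}}}_{\tilde{\xi},P,t}$, i.e. the justification that the formal first-order expansion above genuinely computes the proto-derivative of the solution set rather than merely an outer bound on it. Polyhedrality of $D$ supplies the proto-differentiability of $\mathcal{N}_D$ for free, and the $C^1$ regularity of $\Psi_{\tilde{\xi},P,t}$ localises and controls the $o(\varepsilon)$ remainder uniformly; the care lies in showing that the inner and outer graphical limits of the quotient sets coincide through the nonsmooth composition, so that no spurious first-order direction is created or lost. This is precisely where Definition \ref{def:proto-d} and the identification of the proto-derivative of the polyhedral normal cone with its graphical derivative are indispensable.
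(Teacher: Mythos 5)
Your strategy coincides with the paper's: view \eqref{eq:GE-purt-single} as a generalized equation parametrized by $\varepsilon$, differentiate the single-valued part $\Psi_{\tilde{\xi},P,t}$ at $(\by{f}_t^*,0)$, and combine this with proto-differentiability of the polyhedral normal-cone map $\mathcal{N}_D$ to arrive at the linearized inclusion. Your derivative computation is exactly the one in the paper, which evaluates the semi-derivative
$\Psi'_{\tilde{\xi},P,t;\by{f}_t^*,0}(\by{f},1)
=\hat{\by{u}}(\by{f}_t^*,\by{C}(\by{f}_t^*,\tilde{\xi}),t)-\hat{\phi}_t(\by{f}_t^*,P)+\nabla_{\by{f}}\hat{\phi}_t(\by{f}_t^*,P)\by{f}$
via \cite[Definition 3.1]{LevyR94}; this matches your $\partial_\varepsilon\Psi+\nabla_{\by{f}}\Psi\,\by{\eta}$, and your inclusion is the paper's formula \eqref{expression:IF} after the sign flip.

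The genuine gap is the step you yourself flag as the main obstacle: you never actually prove that the formal first-order expansion computes the proto-derivative of the solution mapping $\by{\tilde{\mathcal{F}}}_{\tilde{\xi},P,t}$, i.e.\ that the inner and outer graphical limits of the difference quotients coincide, and the tool you invoke does not close it. An inverse-image/chain rule for graphical derivatives applied to $(\varepsilon,\by{f})\mapsto(\by{f},-\Psi_{\tilde{\xi},P,t}(\by{f},\varepsilon))$ generally yields only an \emph{outer} estimate of the derivative of the solution set unless a constraint qualification is verified; polyhedrality of $D$ gives proto-differentiability of $\mathcal{N}_D$ itself (as you correctly note), but not, by itself, of the solution map through the composition. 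Your trial-arc argument shows that the derivative of any differentiable arc of solutions satisfies the linearized inclusion, which is again only the outer inclusion $\inmat{GIF}(\tilde{\xi};\by{\mathcal{F}}_t(\cdot),P,\by{f}_t^*)\subset\{\by{f}:\cdots\}$; equality requires showing every solution of the linearized inclusion is attained as a limit of quotients. The paper closes precisely this step by citing \cite[Theorem 4.1]{LevyR94}, the Levy--Rockafellar sensitivity theorem for parametrized generalized equations $0\in G(x,w)+M(x)$, whose conclusion is that when $G$ is semi-differentiable and $M$ is proto-differentiable, the solution mapping is proto-differentiable with proto-derivative equal to the solution set of the linearized generalized equation. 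Replacing your appeal to a chain rule in \cite{Rockafellar89} by an appeal to \cite[Theorem 4.1]{LevyR94} (or supplying an equivalent two-sided limit argument) turns your proposal into a complete proof, and indeed into the paper's proof.
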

\begin{proof}
    By \cite[Definition 3.1]{LevyR94}, the directional derivative of $\Psi_{\tilde{\xi},P,t}(\cdot,\cdot)$ at $(\by{f}_t^*,0)$ 
     in the direction $(\by{f},1)$
    can be written as
    \bgeq
    &&\Psi_{\tilde{\xi},P,t;\by{f}_t^*,0}'(\by{f},1)\\
    &=&\lim_{\epsilon\downarrow 0,(\by{f}',s)\to(\by{f},1)}\frac{\Psi_{\tilde{\xi},P,t}(\by{f}_t^*+\epsilon\by{f}',\epsilon s)-\Psi_{\tilde{\xi},P,t}(\by{f}_t^*,0)}{\epsilon}\\
    &=&\lim_{\epsilon\downarrow 0,(\by{f}',s)\to(\by{f},1)}\left\{\frac{\hat{\phi}_t(\by{f}_t^*+\epsilon\by{f}',P)-\hat{\phi}_t(\by{f}_t^*,P)}{\epsilon}\right.\\
    &&\quad\quad\quad\quad\left.+\frac{\epsilon s\left(\hat{\by{u}}(\by{f}_t^*+\epsilon\by{f}',\by{C}(\by{f}_t^*+\epsilon\by{f}',\tilde{\xi}),t)-\hat{\phi}_t(\by{f}_t^*+\epsilon\by{f}',P)\right)}{\epsilon}\right\}\\
    &=&\hat{\by{u}}(\by{f}_t^*,\by{C}(\by{f}_t^*,\tilde{\xi}),t)-\hat{\phi}_t(\by{f}_t^*,P)+\nabla_{\by{f}}\hat{\phi}_t(\by{f}_t^*,P)\by{f}.
    \edeq
    Thus, by the continuous differentiability of $\hat{\by{u}}(\cdot,\by{C}(\cdot,\xi),t)$ and \cite[Theorem 4.1]{LevyR94}, we conclude \eqref{expression:IF}.
    \hfill $\Box$
\end{proof}
\subsection{Proof of Theorem \ref{thm:IF}}
\begin{proof}
   Part (i). By the condition, 
    \bgeq
\by{\tilde{\mathcal{F}}}'_{\tilde{\xi},P,t;0,\by{f}_t^*}(0)=\left\{\by{f}\in\R^N| 0\in\mathbb{E}_P\left[\nabla_{\by{f}}\hat{\by{u}}(\by{f}_t^*,\by{C}(\by{f}_t^*,\xi),t)\right]\by{f}+\mathcal{N}_{\by{f}_t^*,\hat{\by{u}}^*}'(\by{f})\right\}=\{0\},
    \edeq
    then by \cite[Theorem 4.1]{Rockafellar89}, there exist constants $\alpha_0$, $\rho$ and $\beta\in(0,1)$ depending on $\tilde{\xi}$ such that
    \bgeq
    \tilde{\by{\mathcal{F}}}_{\tilde{\xi},P,t}(\varepsilon)\cap(\by{f}_t^*+\alpha_0\mathcal{B})\subset\by{f}_t^*+\rho \varepsilon\mathcal{B}
    \edeq
    for all $\varepsilon\in(0,\beta)$, where $\mathcal{B}$ denotes a unit ball in $\R^N$. In addition,
    \bgeq
    \frac{\tilde{\by{\mathcal{F}}}_{\tilde{\xi},P,t}(\epsilon s)-\by{f}_t^*}{\epsilon}\cap\frac{\alpha_0}{\epsilon}\mathcal{B}\subset\rho s\mathcal{B}
    \edeq
    for all $\epsilon s\in(0,\beta)$. 
    Therefor 
    \bgeq
   \inmat{GIF}(\tilde{\xi};\by{\mathcal{F}}_t(\cdot),P,\by{f}_t^*)=\limsup_{\epsilon\downarrow 0,s\to 1}\frac{\tilde{\by{\mathcal{F}}}_{\tilde{\xi},P,t}(\epsilon s)-\by{f}_t^*}{\epsilon}\in\rho\mathcal{B},
    \edeq
    which implies
    \bgeq
    \|\inmat{GIF}(\tilde{\xi};\by{\mathcal{F}}_t(\cdot),P,\by{f}_t^*)\|\leq\rho.
    \edeq
    Part (ii). Similar to the proof of Theorem 2.2 (ii), under the strong regularity condition, we can obtain that 
    \bgeqn
    \label{eq:solu}
    \|\by{f}_t(Q),\by{f}_t^*\|\leq \gamma \|\hat{\phi}_t(\by{f}_t(Q),P)-\hat{\phi}_t(\by{f}_t(Q),Q)\|
    \edeqn
    for all $\by{f}_t(Q)\in\by{\mathcal{F}}_t(Q)\cap\mathcal{V}_{\by{f}_t^*}$ and $Q\in\mathscr{P}(\Xi)$.
    Let $Q=(1-\varepsilon)P+\varepsilon\delta_{\tilde{\xi}}$ and $\epsilon_0>0$ such that $\by{\mathcal{F}}_t(Q)\cap\mathcal{V}_{\by{f}_t^*,\epsilon_0}\neq\emptyset$. We have  
    \bgeqn
    \label{eq:solu-cont}
    \by{\mathcal{F}}_t(Q)\cap\mathcal{V}_{\by{f}_t^*,\epsilon_0}
    &\subset& \by{f}_t^*+\gamma \|\hat{\phi}_t(\by{f}_t(Q),P)-\hat{\phi}_t(\by{f}_t(Q),Q)\|\mathcal{B}\nonumber\\
    &\subset& \by{f}_t^*+\gamma \sup_{\by{f}\in\mathbb{F}_t}\|\hat{\phi}_t(\by{f},P)-\hat{\phi}_t(\by{f},Q)\|\mathcal{B}\nonumber\\
    &=& \by{f}_t^*+\gamma\varepsilon \sup_{\by{f}\in\mathbb{F}_t}\|\hat{\phi}_t(\by{f},P)-\hat{\by{u}}(\by{f},\by{C}(\by{f},\tilde{\xi}),t)\|\mathcal{B}
    \edeqn
    via \eqref{eq:solu}, where $\mathcal{V}_{\by{f}_t^*,\epsilon_0}$ denotes a $\epsilon_0$-neighborhood of $\by{f}_t^*$ and $\mathbb{F}_t$ is defined as in Assumption \ref{asm:compact-MSVIP}. Note that $\by{\mathcal{F}}_t(Q)=\tilde{\by{\mathcal{F}}}_{\tilde{\xi},P,t}(\varepsilon)$, then \eqref{eq:solu-cont} implies that 
    \bgeq
    \frac{\tilde{\by{\mathcal{F}}}_{\tilde{\xi},P,t}(\epsilon s)-\by{f}_t^*}{\epsilon}\cap\frac{\epsilon_0}{\epsilon}\mathcal{B}\subset\gamma s\sup_{\by{f}\in\mathbb{F}_t}\|\hat{\phi}_t(\by{f},P)-\hat{\by{u}}(\by{f},\by{C}(\by{f},\tilde{\xi}),t)\|\mathcal{B}
    \edeq
    for $\gamma s\in(0,1)$. Thus 
    \bgeq
   \inmat{GIF}(\tilde{\xi};\by{\mathcal{F}}_t(\cdot),P,\by{f}_t^*)&=&\limsup_{\epsilon\downarrow 0,s\to 1}\frac{\tilde{\by{\mathcal{F}}}_{\tilde{\xi},P,t}(\epsilon s)-\by{f}_t^*}{\epsilon}\in\rho\mathcal{B},\\
   &\subset&
   \sup_{\by{f}\in\mathbb{F}_t}\|\hat{\phi}_t(\by{f},P)-\hat{\by{u}}(\by{f},\by{C}(\by{f},\tilde{\xi}),t)\|\mathcal{B},
    \edeq
    which implies 
    \bgeq
    \|\inmat{GIF}(\tilde{\xi};\by{\mathcal{F}}_t(\cdot),P,\by{f}_t^*)\|\leq\gamma \sup_{\by{f}\in\mathbb{F}_t}\|\hat{\phi}_t(\by{f},P)-\hat{\by{u}}(\by{f},\by{C}(\by{f},\tilde{\xi}),t)\|.
    \edeq
    for all $\tilde{\xi}\in\R^k$. By taking the supremum w.r.t. $\tilde{\xi}\in\R^k$, we derive \eqref{GIF:bounded}. The proof is completed.
    \hfill $\Box$
\end{proof}

\subsection{Proof of Proposition \ref{prop:u-LipC}}
\begin{proof}
Part (i). 
Since $\hat{\by{h}}$ is globally Lipschitz continuous in the second argument uniformly w.r.t. the first and the third arguments with modulus $\theta_2$, 
then
 \bgeq
&&\|\hat{\by{u}}(\by{f},\by{C}(\by{f},\xi),t)-\hat{\by{u}}(\by{f},\by{C}(\by{f},\xi'),t)\|\\
&=&\|\by{g}(\by{f},\by{C}(\by{f},\xi))-\by{g}(\by{f},\by{C}(\by{f},\xi'))+\hat{\by{h}}(\by{f},\by{C}(\by{f},\xi),t)-\hat{\by{h}}(\by{f},\by{C}(\by{f},\xi'),t)\|\\
&\leq& \|\by{g}(\by{f},\by{C}(\by{f},\xi))-\by{g}(\by{f},\by{C}(\by{f},\xi'))\|+\|\hat{\by{h}}(\by{f},\by{C}(\by{f},\xi),t)-\hat{\by{h}}(\by{f},\by{C}(\by{f},\xi'),t)\|\\
&
\leq&(\theta_1+\theta_2)\|\by{C}(\by{f},\xi))-\by{C}(\by{f},\xi'))\|\\
&\leq&L\eta_{\Delta^\top}(\theta_1+\theta_2)  \|\xi-\xi'\|.
\edeq 
Part (ii). Since, 
\bgeqn
&&\|\nabla_{\by{f}}\hat{\by{u}}(\by{f},\by{C}(\by{f},\xi),t)-\nabla_{\by{f}}\hat{\by{u}}(\by{f},\by{C}(\by{f},\xi'),t)\|\nonumber\\
&&=\|\nabla_{\by{f}}\by{g}(\by{f},\by{C}(\by{f},\xi))-\nabla_{\by{f}}\by{g}(\by{f},\by{C}(\by{f},\xi'))+\nabla_{\by{f}}\hat{\by{h}}(\by{f},\by{C}(\by{f},\xi),t)-\nabla_{\by{f}}\hat{\by{h}}(\by{f},\by{C}(\by{f},\xi'),t)\|\nonumber\\
&&\leq \|\nabla_{\by{f}}\by{g}(\by{f},\by{C}(\by{f},\xi))-\nabla_{\by{f}}\by{g}(\by{f},\by{C}(\by{f},\xi'))\|+\|\nabla_{\by{f}}\hat{\by{h}}(\by{f},\by{C}(\by{f},\xi),t)-\nabla_{\by{f}}\hat{\by{h}}(\by{f},\by{C}(\by{f},\xi'),t)\|\nonumber\\
&&\leq\theta_{1}\|\Delta^\top\|\|\nabla_{\by{v}}\by{T}(\Delta\by{f},\xi)-\nabla_{\by{v}}\by{T}(\Delta\by{f},\xi')\|\|\Delta\|+\|\nabla_{\by{f}}\hat{\by{h}}(\by{f},\by{C}(\by{f},\xi),t)-\nabla_{\by{f}}\hat{\by{h}}(\by{f},\by{C}(\by{f},\xi'),t)\|\nonumber
\edeqn
Moreover, since for the case that $C_r(\by{f},\xi)$ and $C_r(\by{f},\xi')$ are in the same segment interval, we have  
\bgeq
&&\|[\nabla_{\by{f}}\hat{\by{h}}(\by{f},\by{C}(\by{f},\xi),t)]_r-[\nabla_{\by{f}}\hat{\by{h}}(\by{f},\by{C}(\by{f},\xi'),t)]_r\|\\
&&=\left\{
\begin{array}{ll}
\theta_2\|\left[\nabla_{\by{f}}\by{C}(\by{f},\xi)-\nabla_{\by{f}}\by{C}(\by{f},\xi')\right]_r\|, & \inmat{for} \;\; C_r,C_r'>t+\tau_k\\
    \frac{\theta_2}{2t}\|\left(C_r(\by{f},\xi)-\tau_k+t\right)[\nabla_{\by{f}}\by{C}(\by{f},\xi)]_r&\\
    -\left(C_r(\by{f},\xi')-\tau_k+t\right)\left[\nabla_{\by{f}}\by{C}(\by{f},\xi')\right]_r\|& \inmat{for}\; \ -t+\tau_k\leq C_r,C_r'\leq t+\tau_k,\\
    0\in\R^{N\times N}, & \inmat{for}\; \ C_r,C_r' < -t+\tau_k,
\end{array}
\right.\\
&&=\left\{
\begin{array}{ll}
\theta_2\|\left[\nabla_{\by{f}}\by{C}(\by{f},\xi)-\nabla_{\by{f}}\by{C}(\by{f},\xi')\right]_r\|, & \inmat{for} \;\; C_r,C_r'>t+\tau_k\\
    \frac{\theta_2}{2t}\|\left(C_r(\by{f},\xi)-\tau_k+t\right)\left[\nabla_{\by{f}}\by{C}(\by{f},\xi)-\nabla_{\by{f}}\by{C}(\by{f},\xi')\right]_r&\\
    +\left(C_r(\by{f},\xi)-C_r(\by{f},\xi')\right)\left[\nabla_{\by{f}}\by{C}(\by{f},\xi')\right]_r\|& \inmat{for}\; \ -t+\tau_k\leq C_r,C_r'\leq t+\tau_k,\\
    0\in\R^{N\times N}, & \inmat{for}\; \ C_r,C_r' < -t+\tau_k,
\end{array}
\right.
\\
&&\leq\left\{
\begin{array}{ll}
\theta_2\|\left[\nabla_{\by{f}}\by{C}(\by{f},\xi)-\nabla_{\by{f}}\by{C}(\by{f},\xi')\right]_r\|, & \inmat{for} \;\; C_r,C_r'>t+\tau_k\\
    \frac{\theta_2}{2t}|C_r(\by{f},\xi)-\tau_k+t|\|\left[\nabla_{\by{f}}\by{C}(\by{f},\xi)-\nabla_{\by{f}}\by{C}(\by{f},\xi')\right]_r\|&\\
    +|C_r(\by{f},\xi)-c_r(\by{f},\xi'|\|\left[\nabla_{\by{f}}\by{C}(\by{f},\xi')\right]_r\|& \inmat{for}\; \ -t+\tau_k\leq C_r,C_r'\leq t+\tau_k,\\
    0\in\R^{N\times N}, & \inmat{for}\; \ C_r,C_r' < -t+\tau_k,
\end{array}
\right.
\\
&&\leq\left\{
\begin{array}{ll}
\theta_2\|\left[\nabla_{\by{f}}\by{C}(\by{f},\xi)-\nabla_{\by{f}}\by{C}(\by{f},\xi')\right]_r\|, & \inmat{for} \;\; C_r,C_r'>t+\tau_k\\
    \theta_2\|\left[\nabla_{\by{f}}\by{C}(\by{f},\xi)-\nabla_{\by{f}}\by{C}(\by{f},\xi')\right]_r\|&\\
    +\frac{\theta_2}{2t}\|\left[\nabla_{\by{f}}\by{C}(\by{f},\xi)\right]_r\|\|\left[\left(\by{C}(\by{f},\xi)-\by{C}(\by{f},\xi')\right)\right]_r\| &  \inmat{for}\; \ -t+\tau_k\leq C_r,C_r'\leq t+\tau_k,\\
    0\in\R^{N\times N}, & \inmat{for}\; \ C_r,C_r' < -t+\tau_k,
\end{array}
\right.
\edeq
where $C_r=C_r(\by{f},\xi)$ and $C_r'=C_r(\by{f},\xi')$. Then, it is not hard to obtain that 
\bgeq
\|\nabla_{\by{f}}\hat{\by{h}}(\by{f},\by{C}(\by{f},\xi),t)-\nabla_{\by{f}}\hat{\by{h}}(\by{f},\by{C}(\by{f},\xi'),t)\|\leq L\theta_2\eta_{\Delta^\top}\left(\eta_{\Delta}+\frac{C_0'}{2t}\right)\|\xi-\xi'\|,
\edeq
where $C_0'=\sup_{\by{f}\in D,\xi\in\Xi}\|\nabla_{\by{f}}\by{C}(\by{f},\xi)\|$. Since $\by{C}(\by{f},\xi)$ is continuously differentiable w.r.t. $\by{f}$ and continuous w.r.t. $\xi$ and $\Xi$ and $\mathbb{F}$ are compact sets, then $C_0'<\infty$.
Therefore, it follows that
\bgeq
\|\nabla_{\by{f}}\hat{\by{u}}(\by{f},\by{C}(\by{f},\xi),t)-\nabla_{\by{f}}\hat{\by{u}}(\by{f},\by{C}(\by{f},\xi'),t)\|\leq L\eta_{\Delta^\top}\left(\theta_1\eta_{\Delta}+\theta_2\eta_{\Delta}+\theta_2\hat{C}(t)\right)\|\xi-\xi'\|
\edeq
for $t>0$, where $\hat{C}(t)=\frac{C_0'}{2t}$. 
\hfill $\Box$
\end{proof}

\subsection{Proof of Theorem \ref{thm:Lips-solu}}
\begin{proof}
Part (i). It follows 
by Lemma \ref{lem:LIp-solu}, there exist a unique vector-valued function 
   $\by{f}_t(\cdot)$ and positive constants $\kappa$ and $\delta$ such that
\bgeqn
    \|\by{f}_t(Q_1)-\by{f}_t(Q_2)\|\leq\kappa\|\hat{\phi}_t(\cdot,Q_1)-\hat{\phi}_t(\cdot,Q_2)\|_{1,\mathcal{O}_t}
\edeqn
for any $Q_1,Q_2\in{\cal M}_k^1$ satisfying $\dd_K(Q_1,P)\leq\delta$ and $\dd_K(Q_2,P)\leq\delta$.
By the definition of $\|\cdot\|_{1,\mathcal{O}_t}$,
\begin{equation}
\begin{split}
&\|\hat{\phi}_t(\cdot,Q_1)-\hat{\phi}_t(\cdot,Q_2)\|_{1,{\cal  O}_t}\\
&= \sup_{{\bm f}\in {\cal  O}_t} \|\hat{\phi}_t({\bm f},Q_1)-\hat{\phi}_t(\by{f},Q_2)\|+ \sup_{{\bm f}\in {\cal  O}_t}\|\nabla_{\bm f}\hat{\phi}_t({\bm f},Q_1)-\nabla_{\bm f}\hat{\phi}_t({\bm f},Q_2)\| \nonumber \\
&= \sup_{{\bm f}\in {\cal  O}_t,\|\by{\eta}\|\leq1} \left|\by{\eta}^\top\hat{\phi}_t({\bm f},Q_1)-\by{\eta}^\top\hat{\phi}_t({\bm f},Q_2)\right|\nonumber\\
& \ \ \ + \sup_{{\bm f}\in {\cal  O}(t),\|\by{\eta}\|\leq1,\|\by{\nu}\|\leq1}\left|\by{\eta}^\top\nabla_{\bm f}\hat{\phi}_t({\bm f},Q_1)\by{\nu}|-\by{\eta}^\top\nabla_{\bm f}\hat{\phi}_t({\bm f},Q_2)\by{\nu}\right| \nonumber \\
&\leq L\eta_{\Delta^\top}(\theta_1+\theta_2) \dd_K(Q_1,Q_2)+L\eta_{\Delta^\top}\left(\theta_1\eta_{\Delta}+\theta_2\eta_{\Delta}+\theta_2\hat{C}(t)\right)\dd_K(Q_1,Q_2)\nonumber\\
&= L\eta_{\Delta^\top}\left(\theta_1+\theta_2+\theta_1\eta_{\Delta}+\theta_2\eta_{\Delta}+\theta_2\hat{C}(t)\right) \dd_K(Q_1,Q_2),
\end{split}
\end{equation}
where $\hat{\phi}_t({\bm f},Q)=\mathbb{E}_Q[\hat{\by{u}}(\by{f},\by{C}(\by{f},\xi),t)]$.
The first inequality holds due to $\frac{\by{\eta}^\top\hat{\by{u}}(\by{f},\by{C}(\by{f},\xi),t)}{ L\eta_{\Delta^\top}(\theta_1+\theta_2)  }$ and $\frac{\by{\eta}^\top\nabla_{\by{f}}\hat{\by{u}}(\by{f},\by{C}(\by{f},\xi),t)\by{\nu}}{L\eta_{\Delta^\top}\left(\theta_1\eta_{\Delta}+\theta_2\eta_{\Delta}+\theta_2\hat{C}(t)\right)}\in\mathcal{F}_1(\R^k)$ via Proposition \ref{prop:u-LipC}, where $\mathcal{F}_1(\R^k)$ is defined as in \eqref{dist-Kan}. 

Part (ii). Since $\mathcal{M}_k^\gamma$ is relatively compact under $\|\cdot\|$-weak topology when $\gamma>1$ (see \cite[Lemma 2.69]{Claus16}) and $\dd_K$ metrizes the $\|\cdot\|$-topology (see \cite[ Theorem 6.3.1]{Rachev91}), thus by Prokhorov's theorem \cite{Prokhorov56}, we can construct a $\epsilon$-net $\mathcal{Q}_J:=\{Q_1,\cdots,Q_J\}$ in the closure of $\mathcal{M}_{k,M_0}^1$ under the metric $\dd_K$ such that $\mathcal{M}_{k,M_0}^1\subset\cup_{j=1}^J\mathcal{B}(Q_j,\epsilon)$, where $\mathcal{B}(Q_j,\epsilon)$ represents a closed ball centered at $Q_j$ with radius $\epsilon$ under the metric $\dd_K$. Since $\by{f}_t(\cdot)$ is continuous over $\mathcal{M}_{k,M_0}^\gamma$ and the strong regularity condition holds at $\by{f}_t(Q)$ for $Q\in\mathcal{M}_{k,M_0}^\gamma$ by the conditions in (iii), then we can set $\epsilon$ sufficiently small such that $\by{f}_t(Q)$ for $Q\in\mathcal{B}(Q_j,\epsilon)$, is the unique solution to \eqref{eq:GE-purt} in a neighborhood of $\by{f}_t(Q_j)$ containing $\{\by{f}_t(Q):Q\in\mathcal{B}(Q_j,\epsilon)\}$ for $j=1,\cdots,J$. Following a similar argument to Part (ii), we can obtain that there exists a constant $\kappa_j>0$ such that
\bgeqn
\|\by{f}_t(Q')-\by{f}_t(Q'')\|\leq L\eta_{\Delta^\top}\kappa_j\left(\theta_1+\theta_2+\theta_1\eta_{\Delta}+\theta_2\eta_{\Delta}+\theta_2\hat{C}(t)\right)\dd_K(Q',Q'')
\edeqn
for any $Q',Q''\in\mathcal{B}(Q_j,\epsilon)$.

For any $Q',Q''\in\mathcal{M}_{k,M_0}^\gamma$ and $\beta\in(0,1)$, define $Q(\beta)=(1-\beta)Q'+\beta Q''$. Let $\hat{Q}_1\in\mathcal{Q}_J$ be such that $Q'\in\mathcal{B}(\hat{Q}_1,\epsilon)$ and $\beta_1$ be the smallest value in $(0,1)$ such that $Q(\beta_1)$ lies at the boundary of $\mathcal{B}(\hat{Q}_1,\epsilon)$ and in the next ball $\mathcal{B}(\hat{Q}_2,\epsilon)$, where $\hat{Q}_2\in\mathcal{Q}_J$. Next, we let $\beta_2$ be the smallest value in $[\beta_1,1)$  such that $Q(\beta_2)$ lies at the boundary of $\mathcal{B}(\hat{Q}_2,\epsilon)$ and in the next ball $\mathcal{B}(\hat{Q}_3,\epsilon)$, where $\hat{Q}_3\in\mathcal{Q}_J$. Continuing the process, we let $\beta_{J-1}$ be the smallest value in $[\beta_{J-2},1)$  such that $Q(\beta_{J-1})$ lies at the boundary of $\mathcal{B}(\hat{Q}_{J-1},\epsilon)$ and in the next ball $\mathcal{B}(\hat{Q}_J,\epsilon)$, where $\hat{Q}_J\in\mathcal{Q}_J$ and $Q''\in\mathcal{B}(\hat{Q}_J,\epsilon)$.  Consequently, we have 
\begin{equation*}
\begin{split}
\|\by{f}_t(Q')-\by{f}_t(Q'')\|
&\leq\|\by{f}_t(Q')-\by{f}_t\left(Q(\beta_1)\right)\|+\sum_{j=1}^{J-2}\|\by{f}_t\left(Q(\beta_j)\right)-\by{f}_t\left(Q(\beta_{j+1})\right)\|\\
&\quad +\|\by{f}_t\left(Q(\beta_{J-1})\right)-\by{f}_t(Q'')\|\\
&\leq L\eta_{\Delta^\top}\left(\theta_1+\theta_2+\theta_1\eta_{\Delta}+\theta_2\eta_{\Delta}+\theta_2\hat{C}(t)\right)\Big[\kappa_{1}\dd_K\left(Q',Q(\beta_1)\right)\\
&\quad +\sum_{j=1}^{J-2}\kappa_{j+1}\dd_K\left(Q(\beta_j),Q(\beta_{j+1})\right) + \kappa_J\dd_K\left(Q(\beta_J),Q''\right)\Big]\\
&\leq\tilde{C}L\eta_{\Delta^\top}\left(\theta_1+\theta_2+\theta_1\eta_{\Delta}+\theta_2\eta_{\Delta}+\theta_2\hat{C}(t)\right)\Big[\beta_1\dd_K\left(Q',Q''\right)+\sum_{j=1}^{J-2}(\beta_{j+1}-\beta_j)\\
&\quad \dd_K\left(Q',Q''\right)+ (1-\beta_j)\dd_K\left(Q',Q''\right)\Big]\\
&=\tilde{C}L\eta_{\Delta^\top}\left(\theta_1+\theta_2+\theta_1\eta_{\Delta}+\theta_2\eta_{\Delta}+\theta_2\hat{C}(t)\right)\dd_K(Q',Q''),
\end{split}
\end{equation*}
where $\tilde{C}=\max_{1\leq j\leq J} \kappa_j$.
The proof is complete.
\hfill $\Box$
\end{proof}

\subsection{Proof of Theorem \ref{thm:SR-LAPUE}}
\begin{proof}
  By definition
    \bgeq
\dd_K(Q_M,P_M)=\sup_{g\in\mathcal{F}_1(\R^k)}\left|\frac{1}{M}\sum_{i=1}^M g(\tilde{\xi}^i)-\frac{1}{M}\sum_{i=1}^M g(\xi^i)\right|\leq\frac{1}{M}\sum_{i=1}^M\|\tilde{\xi}^i-\xi^i\|,
    \edeq
    where $\mathcal{F}_1(\R^k)$ is defined as in \eqref{dist-Kan} and the last inequality is due to the fact that $g$ is Lipschitz continuous with modulus being $1$.
    By rewriting $\hat{\by{f}}_{M,t}(\tilde{\xi}^1,\cdots,\tilde{\xi}^M)$ and $\hat{\by{f}}_{M,t}(\xi^1,\cdots,\xi^M)$ respectively for $\by{f}_t(Q_M)$ and $\by{f}_t(P_M)$, we have 
    \begin{equation*}
        \begin{split}
    \|\hat{\by{f}}_{M,t}(\tilde{\xi}^1,\cdots,\tilde{\xi}^M)-\hat{\by{f}}_{M,t}(\xi^1,\cdots,\xi^M)\|&=\|\by{f}_t(Q_M)-\by{f}_t(P_M)\|\\
    &\leq L\tilde{C}\eta_{\Delta^\top}\left(\theta_1+\theta_2+\theta_1\eta_{\Delta}+\theta_2\eta_{\Delta}+\theta_2\hat{C}(t)\right)\dd_K(Q_M,P_M)\\
    &\leq\frac{L\tilde{C}\eta_{\Delta^\top}\left(\theta_1+\theta_2+\theta_1\eta_{\Delta}+\theta_2\eta_{\Delta}+\theta_2\hat{C}(t)\right)}{M}\sum_{i=1}^M\|\tilde{\xi}^i-\xi^i\|,
        \end{split}
    \end{equation*}
    where the first inequality holds due to Theorem \ref{thm:Lips-solu} (iii) and the fact that $\mathscr{P}(\Xi)$ is a weakly compact set according to the compactness of $\Xi$. Moreover,
   by \cite[Lemma 1]{GuoXu21a},
    \bgeq
    &&\dd_K(P^{\otimes M}\circ\hat{\by{f}}_{M,t}^{-1},Q^{\otimes M}\circ\hat{\by{f}}_{M,t}^{-1})\\
    &&=\sup_{g\in\mathcal{F}_1(\R^k)}\left|\int_{\R}g(s)P^{\otimes M}\circ\hat{\by{f}}_{M,t}^{-1}(ds)-\int_{\R}g(s)Q^{\otimes M}\circ\hat{\by{f}}_{M,t}^{-1}(ds)\right|\\
    &&=\sup_{g\in\mathcal{F}_1(\R^k)}\left|\int_{\R}g(\hat{\by{f}}_{M,t}(\vec{\xi}^M))P^{\otimes M}(d\vec{\xi}^M)-\int_{\R}g(\hat{\by{f}}_{M,t}(\vec{\tilde{\xi}}^M))Q^{\otimes M}(d\vec{\tilde{\xi}}^M)\right|\\
    &&=\dd_\psi(P^{\otimes M},Q^{\otimes M})\leq L\tilde{C}\eta_{\Delta^\top}\left(\theta_1+\theta_2+\theta_1\eta_{\Delta}+\theta_2\eta_{\Delta}+\theta_2\hat{C}(t)\right)\dd_{K}(P,Q)
    \edeq
    for all $M$, where $\vec{\xi}^M=(\xi^1,\cdots,\xi^M)$ and $\vec{\tilde{\xi}}^M=(\tilde{\xi}^1,\cdots,\tilde{\xi}^M)$. The proof is completed. \hfill $\Box$
\end{proof}

\end{document}